\documentclass[smallextended]{svjour3}
\usepackage{amsmath,amssymb,amsfonts,mathtools,comment}
\usepackage{tikz}
\usepackage{algorithm}
\usepackage{algpseudocode}
\usepackage{graphicx,epsfig,color,scalerel,subfigure,booktabs}
\usepackage{siunitx}
\usepackage{afterpage}
\usepackage{enumitem}
\usepackage{upgreek}
\usepackage{mathbbol}
\DeclareSymbolFontAlphabet{\amsmathbb}{AMSb}%
\usepackage{bbold}
\usepackage{acronym}
\usepackage{multirow}
\usepackage{siunitx}
\usepackage{ulem}

\smartqed

\tikzset{shift entire picture/.style n args={2}{execute at end picture={
\pgfmathtruncatemacro{\tmpx}{sign(#1)}
\pgfmathtruncatemacro{\tmpy}{sign(#2)}
\ifnum\tmpx=1
  \ifnum\tmpy=1
   \path[use as bounding box] ([xshift=-#1,yshift=-#2]current bounding box.south west) rectangle 
(current bounding box.north east);
  \else
   \path[use as bounding box] ([xshift=-#1]current bounding box.south west) rectangle 
([yshift=-#2]current bounding box.north east);
  \fi
\else  
  \ifnum\tmpy=1
   \path[use as bounding box] ([yshift=-#2]current bounding box.south west) rectangle 
([xshift=-#1]current bounding box.north east);
  \else
   \path[use as bounding box] (current bounding box.south west) rectangle 
([xshift=-#1,yshift=-#2]current bounding box.north east); 
  \fi
\fi}}}

\definecolor{lightgreen}{rgb}{0.2,0.6,0.2}
\definecolor{lightblue}{rgb}{0.15,0.15,0.85}
\definecolor{darkred}{rgb}{0.85,0.15,0.15}
\definecolor{amber}{rgb}{1.0, 0.49, 0.0}

\newcommand{\aer}[1]{{\leavevmode\color{magenta}{#1}}}

\usepackage[colorlinks=false,breaklinks=true,linkcolor=lightgreen,citecolor=lightblue]{hyperref}

\newcommand{\bn}{\boldsymbol{n}}

\newcommand{\bu}{\boldsymbol{u}}
\newcommand{\bv}{\boldsymbol{v}}

\newcommand{\fb}{\boldsymbol{f}}

\newcommand{\bt}{\boldsymbol{t}}
\newcommand{\bx}{\boldsymbol{x}}

\newcommand{\bsigma}{\boldsymbol{\sigma}}

\newcommand{\bzero}{\boldsymbol{0}}

\newcommand{\bnabla}{\boldsymbol{\nabla}}

\newcommand{\balpha}{\boldsymbol{\alpha}}

\newcommand{\rH}{\mathrm{H}}
\newcommand{\rL}{\mathrm{L}}
\newcommand{\rM}{\mathrm{M}}
\newcommand{\rP}{\mathrm{P}}

\newcommand{\rI}{\mathrm{I}}

\newcommand{\rF}{\mathrm{F}}
\newcommand{\rG}{\mathrm{G}}

\newcommand{\bH}{\mathbf{H}}

\newcommand{\bL}{\mathbf{L}}

\newcommand{\bZ}{\mathbf{Z}}
\newcommand{\bA}{\mathbf{A}}
\newcommand{\bB}{\mathbf{B}}
\newcommand{\bC}{\mathbf{C}}

\newcommand{\bbI}{\mathbb{I}}
\newcommand{\bbR}{\mathbb{R}}

\newcommand{\cA}{\mathcal{A}}
\newcommand{\cT}{\mathcal{T}}

\newcommand{\cE}{\mathcal{E}}

\newcommand{\vdiv}{\mathop{\mathrm{div}}\nolimits}

\newcommand\bcurl{\mathop{\mathbf{curl}}\nolimits}
\newcommand\vcurl{\mathop{\mathrm{curl}}\nolimits}

\newcommand\bDelta{\boldsymbol{\Delta}}

\acrodef{pde}[PDE]{Partial Differential Equation} 
\acrodefplural{pde}[PDEs]{Partial Differential Equations} 
\acrodef{dof}[DoF]{Degree of Freedom} 
\acrodefplural{dof}[DoFs]{Degrees of Freedom} 
\acrodef{fe}[FE]{Finite Element} 
\acrodefplural{fe}[FEs]{Finite Elements} 
\acrodef{vem}[VEM]{Virtual Element Method}
\acrodefplural{vem}[VEMs]{Virtual Element Methods}

\numberwithin{remark}{section}
\allowdisplaybreaks

\title{Stream function -- pressure virtual element methods for the Stokes--Darcy interface problem}

\author{
Franco Dassi
\and
Rekha Khot
\and
David Mora
\and
Andr\'es E. Rubiano
\and  
Ricardo Ruiz-Baier
}
\institute{ F. Dassi \at
                Dipartimento di Matematica e Applicazioni, Università degli studi di Milano Bicocca, Via Roberto Cozzi 55, 20125, Milano, Italy. \email{Franco.Dassi@unimib.it}
            \and
            R. Khot \at
                Department of Mathematics, Indian Institute of Technology Palakkad,  Kanjikode 678623, Kerala, India.  \email{RekhaKhot@iitpkd.ac.in}
            \and
            D. Mora \at
              GIMNAP, Departamento de Matemática, Universidad del Bío-Bío, 4051381 Concepción, Chile; and CI$^2$MA, Universidad de Concepción, Concepción, Chile. 
              \email{DMora@ubiobio.cl}
            \and
            A. E. Rubiano \and R. Ruiz-Baier \at
                School of Mathematics, Monash University, 9 Rainforest Walk, Melbourne, VIC 3800, Australia.  \email{Andres.RubianoMartinez@monash.edu,
                Ricardo.RuizBaier@monash.edu}            
}

\date{Received: date / Accepted: date}

\begin{document}

\maketitle

\begin{abstract}
    This paper introduces a novel   Virtual Element Method (VEM) for the coupled Stokes--Darcy system in primal-primal form. In the free-flow Stokes domain, we implement a stream function formulation that inherently satisfies the incompressibility constraint and reduces computational cost. Across the  interface, mass conservation, normal stress balance, and the Beavers--Joseph--Saffman slip condition are   enforced to couple the biharmonic stream function equation with the Darcy's pressure equation. Leveraging VEM's  ability to handle   general polygonal meshes, the proposed method naturally accommodates   irregular interface geometries without requiring remeshing or adaptive refinement. The accuracy of the method is validated through several numerical simulations that include applications to dead-end filtration,  and network flow in bioartificial organs.
    \keywords{Stream function formulation \and Interface problem \and Coupled Stokes--Darcy \and Virtual Element Methods \and A priori error analysis}
    \subclass{65N30 \and 65N99 \and 65Z05}
\end{abstract}

%%%%%%%%%%%%%%%%%%%%%%%%%%%%%%%%%%%%%%%%%%%%%%%%%
\section{Introduction}
The coupling of free-flow with porous media flow is a fundamental challenge in fluid mechanics, with several applications ranging from groundwater hydrology and reservoir engineering to industrial filtration and biomedical modelling (see e.g. \cite{Bukac2024,dz2011,discacciati2002mathematical,Hanspal2006,Qi2025}). 
%Governed by the coupled Stokes--Darcy equations, the model requires a proper treatment of the fluid--porous interface. 
The interfacial coupling is achieved through continuity of normal velocities (mass conservation), balance of normal stresses, and the Beavers--Joseph--Saffman tangential slip condition \cite{Beavers_Joseph_1967,s1971}. While more general frameworks account for porous media with cracks, and the incorporation of other linear and
nonlinear equations in the coupled problem, such as Navier--Stokes, Brinkman and Forchheimer (see, e.g., \cite{bernardi2005coupling,cimolin2013navier,ervin2009coupled,gatica2012twofold,xie2008uniformly}), the classical Stokes--Darcy system maintains the essential features of the interface mechanisms and remains the most important benchmark for modelling fluid-porous interfaces.

A wide range of primal and mixed \acp{fe}, discontinuous Galerkin (dG), and Hybrid High--Order (HHO) methods have been developed for the Stokes--Darcy problem; a non-exhaustive list of representative references includes \cite{anaya2019,antonietti26,Badea2010,boon22,bukac2023,burman2021,CAMANO2015362,edr2022,gos2011,KANSCHAT2010,vexler2026fully,Wen2020}. Most of these approaches rely on velocity--pressure, velocity--pressure--pseudostress, or vorticity--pressure formulations in the Stokes region and either primal or mixed formulations in the Darcy region. These formulations require the design of compatible discrete spaces that satisfy appropriate inf--sup stability conditions and accurately enforce the interface coupling conditions. While conforming finite element methods often employ matching discretisations across the interface, dG and HHO formulations can naturally accommodate non-matching meshes through the weak enforcement of the coupling conditions.

Motivated by the need for geometric flexibility, recent years have also witnessed the development of \acp{vem} for the Stokes--Darcy problem on general polygonal meshes \cite{Liu2019,Mishra2024,WANG2019998}. Originally introduced in \cite{bbcmmr13}, the \ac{vem}  extends the finite element paradigm to general polygonal and polyhedral meshes while retaining many of its fundamental theoretical and computational properties. This flexibility makes \ac{vem} particularly attractive for interface problems, where complex geometries and non-matching meshes frequently arise.

In two-dimensional simply connected domains, however, the incompressibility constraint can be identically satisfied by introducing a scalar stream function \cite{gr1986}, thereby eliminating the pressure variable from the Stokes equations. While this reformulation has been widely successfully exploited within the FE and dG frameworks \cite{camano2026mixed,carstensen21,cayco86,faisal03,kim21,Liu00}, its adaptation to virtual elements is more recent. In addition to reducing the number of \acp{dof}, this well-known reformulation bypasses the need for inf–sup stable velocity–pressure pairs and yields a fourth-order problem for the stream function. Because the resulting formulation inherently requires $\rH^2(\Omega)$ conforming approximation spaces, $C^1$-conforming VEMs are particularly well-suited, and several stream function formulations and associated pressure recovery algorithms have been successfully established for   uncoupled Stokes and Navier--Stokes settings \cite{ams2024,abmv,mrs2022,ms2025}. However, their extension to the coupled Stokes--Darcy problem remains largely unexplored. This is precisely the gap we  bridge in this paper.

\paragraph{Main contributions.} The key novelties and contributions of this paper are summarised below:
\begin{itemize}
    \item We develop a continuous analysis for the Stokes--Darcy interface problem in stream function -- pressure formulation. This structure does not require inf-sup stability.
    \item We design a $C^1$--$C^0$ lowest-order VEM discretisation for the stream function -- pressure coupled Stokes--Darcy system. Such scheme involves only $3N_{\mathrm{S}}+N_{\mathrm{D}}+4N_{\Sigma}$ DoFs, where $N_{\mathrm{S}}$, $N_{\mathrm{D}}$, and $N_{\Sigma}$ are the number of interior vertices in the Stokes domain, Darcy domain, and on the interface, respectively. 
    \item We conduct the a priori error analysis for the proposed VEM discretisation.
    \item We incorporate a new functionality in the open source object-oriented library \texttt{vem++} \cite{dassi2023vem++}, allowing a straightforward implementation of interface problems through the class \texttt{vem::vemMesh2dDofFilter}.
    \item We provide several numerical experiments, confirming the theoretical findings and the applicability of the method.
\end{itemize}

\paragraph{Plan of the paper.} This paper is organised in the following manner.  The remainder of this section introduces preliminary notation and defines some classical functional spaces. Section~\ref{sec:formulation} introduces the stream function -- pressure formulation for the Stokes--Darcy interface problem. Next, the \ac{vem} spaces, \acp{dof}, polynomial projection operators, interpolation estimates, and the discrete problem are provided in Section~\ref{sec:vem}. Section~\ref{sec:apriori} derives a  a priori error estimates for the discrete scheme. We show some numerical experiments in Section~\ref{sec:numerical_examples} to confirm our theoretical findings and Section~\ref{sec:conclusion} provides a summary of our conclusions and remaining open challenges.  

\paragraph{Notation and preliminaries.} 
Throughout this paper, $\Omega:=\Omega_{\mathrm{S}}\cup \Omega_{\mathrm{D}} \subset \mathbb{R}^2$ is assumed to be a polygonal domain with Lipschitz continuous boundary $\Gamma := \partial \Omega$. For a vector function $\bsigma:\Omega \to \bbR^2$ and a scalar function $\phi:\Omega \to \bbR$, we define the vector gradient $\bnabla \bsigma:\Omega \to \bbR^{2\times 2}$, the scalar gradient $\nabla \phi:\Omega \to \bbR^2$, the vector divergence $\vdiv\bsigma : \Omega \to \bbR$, the vector rotational $\vcurl \bsigma:\Omega \to \bbR$, the scalar rotational $\bcurl \phi:\Omega \to \bbR^2$, the vector Laplacian $\bDelta \bsigma:\Omega \to \bbR^2$, the scalar Laplacian $\Delta \phi:\Omega \to \bbR$, and the Hessian matrix $\nabla^2 \phi: \Omega \to \bbR^{2\times 2}$ as $ (\bnabla \bsigma)_{ij} := \partial_j \bsigma_i$, $ (\nabla \phi)_{i} := \partial_i \phi$, $\vdiv \bsigma := \sum_i   \partial_i \bsigma_i$, $\vcurl \bsigma := \partial_1 \bsigma_2 - \partial_2 \bsigma_1$, $\bcurl \phi := (\partial_2 \phi, -\partial_1 \phi)^{\tt t}$, $(\bDelta\bu)_i := \vdiv(\bnabla\bu)_i$, $\Delta\phi := \vdiv(\nabla\phi)$, and $\nabla^2\phi := \bnabla(\nabla\phi)$, respectively. Moreover, the following identities hold
\begin{gather*} %\label{eq:differential_identities}
    \vdiv(\bcurl(\phi)) = 0, \quad \bcurl(\phi)\cdot \bn= \nabla \phi \cdot \bt, \quad \text{and} \quad \bcurl(\phi)\cdot \bt = -\nabla \phi \cdot \bn,
\end{gather*}
with $\bn$ and $\bt$ denoting the respective unit outward normal and tangential vectors with respect to $\Gamma$.
 
Given $s \geq 0$, we denote the usual Hilbert space of scalar and vector functions with domain $\Omega$ by $\rH^{s}(\Omega)$ and $\bH^s(\Omega)$. The usual norm (resp. semi-norm) of $\rH^{s}(\Omega)$ is denoted by $\|\bullet\|_{s,\Omega}$ (resp. $|\bullet|_{s,\Omega}$). When $s=0$, we define the Hilbert space $\rH^0(\Omega):=\rL^2(\Omega)$ with the usual inner product $(\bullet, \bullet)_\Omega$ (similarly for $\bL^2(\Omega)$), while $\rL^2_0(\Omega)$ denotes the subspace of $\rL^2(\Omega)$ consisting of functions with zero mean over $\Omega$. The trace space of $\rH^1(\Omega)$ is denoted by $\rH^{\frac{1}{2}}(\Gamma)$, and its dual space by $\rH^{-\frac{1}{2}}(\Gamma)$. The notation $\langle \bullet, \bullet \rangle_{\Gamma}$ represents the duality pairing between $\rH^{-\frac{1}{2}}(\Gamma)$ and $\rH^{\frac{1}{2}}(\Gamma)$, induced by the $\rL^2(\Gamma)$ inner product.

Finally, $C$ denotes a generic positive constant (which can have different values at its different occurrences) independent of the mesh size $h$ and physical constants. Moreover, the notation $A \lesssim B$ indicates that for $A,B\geq0$ we have that $A \leq C B$.

%%%%%%%%%%%%%%%%%%%%%%%%%%%%%%%%%%%%%%%%%%%%%%%%%
\section{The model problem}\label{sec:formulation}
In this section we introduce the stream function -- pressure formulation for the Stokes--Darcy interface problem. The strong and weak formulations are derived using identities involving the $\bcurl$ operator. Finally, the well-posedness result is stated.

\subsection{The classical Stokes--Darcy interface problem}
Let $\Omega_\mathrm{S}$ be a simply connected free-flow domain filled with a viscous incompressible fluid whose dynamics is governed by the velocity--pressure form of Stokes' equations 
\begin{subequations} \label{eq:S}
    \begin{alignat}{2}
        - \mu \bDelta \bu + \nabla p &= \fb,  &&\quad \text{in } \Omega_\mathrm{S}, \label{eq:S1} \\
         \vdiv \bu &= 0, &&\quad \text{in } \Omega_\mathrm{S}, \label{eq:S2}
    \end{alignat}
\end{subequations}
where $\bu:\Omega_\mathrm{S} \to \bbR^2$ denotes the velocity of the fluid and $p:\Omega_\mathrm{S} \to \bbR$ the fluid pressure, both in the free-flow domain. In addition, $\mu>0$ is the fluid viscosity and $\fb:\Omega_\mathrm{S} \to \bbR^2$ is a source/sink term. The boundary $\partial \Omega_\mathrm{S}$ is the wall of the container separated into $\Gamma_\mathrm{S}$ and $\Sigma$ such that $\partial \Omega_\mathrm{S}=\Gamma_\mathrm{S}\cup\Sigma$ and $\Sigma=\partial \Omega_\mathrm{S}\setminus \Gamma_\mathrm{S}$. We consider standard Dirichlet boundary conditions on $\Gamma_\mathrm{S}$ as follows
\begin{alignat}{2} \label{eq:SBC}
    \bu &= \bzero, && \quad \text{on } \Gamma_\mathrm{S}. 
\end{alignat}
On the other hand, consider a porous media $\Omega_\mathrm{D}$ filled with fluid in the intrinsic pore space with %velocity $\bzeta:\Omega_\mathrm{D} \to \bbR^2$ and 
pressure $\varphi:\Omega_\mathrm{D} \to \bbR$ satisfying Darcy's Law
\begin{alignat}{2}\label{eq:D}
    %\bzeta + K \nabla \varphi &= \bzero,  &&\quad \text{in } \Omega_\mathrm{D}, \label{eq:D1} \\
    %\vdiv \bzeta &= g, &&\quad \text{in } \Omega_\mathrm{D}. \label{eq:D2}
    \vdiv (-\kappa \nabla \varphi) &= g, &&\quad \text{in } \Omega_\mathrm{D}. 
\end{alignat}
Here, $\kappa>0$ is the hydraulic conductivity and $g:\Omega_\mathrm{D}\to \bbR$ is another source/sink term and we assume that
$\int_{\Omega_\mathrm{D}}g=0$. Similarly, the boundary $\partial \Omega_\mathrm{D}$ fulfils the splitting $\Gamma_\mathrm{D}\cup\Sigma$, with $\Sigma=\partial \Omega_\mathrm{D}\setminus \Gamma_\mathrm{D}$. In this case, no flux is prescribed in the following way
\begin{alignat}{2} \label{eq:DBC}
    (-\kappa \nabla \varphi) \cdot \bn_\mathrm{D} &= 0, && \quad \text{on } \Gamma_\mathrm{D},
\end{alignat}
where $\bn_\mathrm{D}$ is the unit outward normal vector with respect to $\Gamma_\mathrm{D}$. Finally, we focus on the remaining portion of the boundary, the interface $\Sigma:=\partial\Omega_\mathrm{S}\cap\partial\Omega_\mathrm{D}$, where the transmission conditions are  
\begin{subequations} \label{eq:TC}
    \begin{alignat}{2}
        \bu \cdot \bn_\Sigma &= (-\kappa \nabla \varphi) \cdot \bn_\Sigma, &&\quad \text{on } \Sigma, \label{eq:TC1} \\
        -  (\mu \bnabla\bu - p\bbI)\bn_\Sigma \cdot \bn_\Sigma &= \varphi, &&\quad \text{on } \Sigma, \label{eq:TC2} \\
        -  (\mu \bnabla\bu - p\bbI)\bn_\Sigma \cdot \bt_\Sigma &= \alpha\frac{\mu }{\sqrt{\kappa}} \bu \cdot \bt_\Sigma, &&\quad \text{on } \Sigma. \label{eq:TC3}
    \end{alignat}
\end{subequations}
The third equation \eqref{eq:TC3} above is known as the Beavers--Joseph--Saffman transmission condition, in which the parameter $\alpha$ refers to the slip rate coefficient, the quotient $\mu/\sqrt{\kappa}$ represents the effective interfacial friction coefficient and $\bbI\in\bbR^{2\times2}$ is the identity matrix. In particular, we fix $\bn_\Sigma$ as the outward (resp. inward) unit normal vector with respect to $\partial \Omega_\mathrm{S}$ (resp. $\Omega_\mathrm{D}$) when seen on $\Sigma$. A sketch of the computational domain highlighting the boundary configuration and interface is shown in Fig.~\ref{fig:computational_domain}. 

\begin{figure}[h!]
    \centering
    \subfigure[Single boundary condition.\label{fig:computational_domain}]{\includegraphics[width=0.34\textwidth,trim={0.cm 0.cm 13.4cm 0.cm},clip]{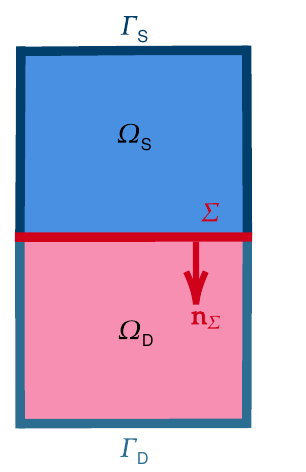}}
    \subfigure[Mixed boundary condition.\label{fig:domain_mixed_case}]
    {\includegraphics[width=0.34\textwidth,trim={0.cm 0.cm 13.4cm 0.cm},clip]{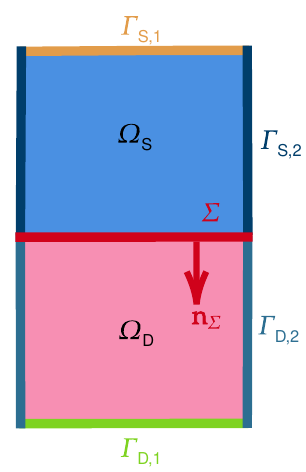}}
    \caption{Sketch of the computational domain and possible boundary configuration.}
    \label{fig:computational_domain_complete}
\end{figure}
\begin{remark} \label{rem:BCAlt}

It is possible to consider various combinations of boundary conditions on $\Omega_\mathrm{S}$ and $\Omega_\mathrm{D}$, provided they are compatible with the transmission conditions  \eqref{eq:TC}. For example, one can impose full Neumann conditions on $\Gamma_\mathrm{S}$ and full Dirichlet conditions on $\Gamma_\mathrm{D}$, namely
\begin{subequations}\label{eq:BCAlt}
    \begin{alignat}{2}
    (\mu\nabla\bu - p\bbI)\bn_\mathrm{S} &= \bzero, &&\quad \text{on } \Gamma_\mathrm{S},\label{eq:BCAltS}\\
    \varphi &= 0, &&\quad \text{on } \Gamma_\mathrm{D},\label{eq:BCAltD}
    \end{alignat}
\end{subequations}
which is consistent with \eqref{eq:TC2}. More generally, non--homogeneous and/or mixed boundary conditions may also be considered, as long as the conditions \eqref{eq:SBC} and \eqref{eq:DBC} (or alternatively \eqref{eq:BCAltS} and \eqref{eq:BCAltD}) are compatible. Some examples are given below: for $z_\mathrm{D,2}\in\rH^{-\frac{1}{2}}(\Gamma_{\mathrm{S},2}\cup\Gamma_{\mathrm{D},2})$ and $\varphi_\mathrm{D,2}\in\rH^{\frac{1}{2}}(\Gamma_{\mathrm{S},2}\cup\Gamma_{\mathrm{D},2})$, Fig.~\ref{fig:domain_mixed_case} illustrates the following configuration of the boundary sections $\Gamma_{\mathrm{S},1}$, $\Gamma_{\mathrm{S},2}$, $\Gamma_{\mathrm{D},1}$, and $\Gamma_{\mathrm{D},2}$
%\begin{equation*}
\begin{minipage}{0.48\textwidth}
\begin{alignat*}{2}
(\mu\nabla\bu - p\bbI)\bn_\mathrm{S,1}&= \bzero, &&\quad \text{on } \Gamma_{\mathrm{S},1},\\
\bu&= z_\mathrm{D,2} \bn_\mathrm{S,2}, &&\quad \text{on } \Gamma_{\mathrm{S},2},\\
\varphi&=0, &&\quad \text{on } \Gamma_{\mathrm{D},1},\\
(-\kappa\nabla \varphi)\cdot \bn_\mathrm{D,2}&= -z_\mathrm{D,2}, &&\quad \text{on } \Gamma_{\mathrm{D},2}.
\end{alignat*}
\centering \text{(Case 1)}
\end{minipage}
\hfill
\begin{minipage}{0.48\textwidth}
\begin{alignat*}{2}
\bu&= \bzero, &&\quad \text{on } \Gamma_{\mathrm{S},1},\\
(\mu\nabla\bu - p\bbI)\bn_\mathrm{S,2}&= \varphi_\mathrm{D,2}\bn_\mathrm{S,2}, &&\quad \text{on } \Gamma_{\mathrm{S},2},\\
(-\kappa\nabla \varphi)\cdot \bn_\mathrm{D,1}&= 0, &&\quad \text{on } \Gamma_{\mathrm{D},1},\\
\varphi&=-\varphi_\mathrm{D,2}, &&\quad \text{on } \Gamma_{\mathrm{D},2}.
\end{alignat*}
\centering \text{(Case 2)}
\end{minipage}
%\end{equation*}
\end{remark}

In view of the boundary conditions \eqref{eq:SBC} and \eqref{eq:DBC}, we define the following Hilbertian functional space
\begin{equation*}
    \bH^1_{\Gamma_\mathrm{S}}(\Omega_\mathrm{S}):= \{ \bv\in \bH^1(\Omega_\mathrm{S}): \bv= \bzero \text{ on } \Gamma_\mathrm{S}\}.%\quad \text{and} \quad \rH^1(\Omega_\mathrm{D}):= \{  \psi\in \rH^1(\Omega_\mathrm{D}): \int_{\Omega_\mathrm{D}}\psi= 0 \}.
\end{equation*}
We multiply the governing equations by appropriate test functions, integrate by parts, and apply \eqref{eq:TC} to arrive at the weak formulation: given $\fb\in \bL^2(\Omega_\mathrm{S})$ and %$\btau^\mathrm{S}_2\in \bH^{-\frac{1}{2}}(\Gamma_\mathrm{S}_2)$,
$g\in\rL^2_0(\Omega_\mathrm{D})$,  %and $\varrho^\mathrm{D}_2\in\rH^{-\frac{1}{2}}(\Gamma_\mathrm{D})$
 find $(\bu,p,\varphi)\in \bH^1_{\Gamma_\mathrm{S}}(\Omega_\mathrm{S}) \times \rL^2(\Omega_\mathrm{S}) \times \rH^1(\Omega_\mathrm{D})$ such that 
\begin{subequations}\label{eq:WC}
    \begin{alignat}{2}
    \mu\int_{\Omega_\mathrm{S}} \!\!\bnabla\bu : \bnabla\bv - \! \int_{\Omega_\mathrm{S}} \!p \vdiv\bv + \frac{\alpha\mu}{\sqrt{\kappa}} \int_\Sigma (\bu\cdot\bt_\Sigma) (\bv\cdot\bt_\Sigma) + \int_\Sigma \varphi \bv\cdot\bn_\Sigma  &= \int_{\Omega_\mathrm{S}} \fb \cdot \bv, &&\,\, \forall \bv\in \bH^1_{\Gamma_\mathrm{S}}(\Omega_\mathrm{S}), \label{eq:WC1} \\ %&& \notag \\ 
        %& \quad + \langle\btau^\mathrm{S}_2, \bv\rangle_{\Gamma_\mathrm{S}_2}, &&\quad \forall \bv\in \bH^1_{\Gamma_\mathrm{S}}(\Omega), \label{eq:WC1} \\
        \int_{\Omega_\mathrm{S}} q \vdiv \bu &= 0, &&\,\, \forall q\in \rL^2(\Omega_\mathrm{S}), \label{eq:WC2} \\
        -\kappa\int_{\Omega_\mathrm{D}} \nabla \varphi \cdot \nabla \psi + \int_\Sigma \bu\cdot \bn_\Sigma \psi &= -\int_{\Omega_\mathrm{D}}g\psi,  && \,\, \forall \psi\in \rH^1(\Omega_\mathrm{D}). \label{eq:WC3}
    \end{alignat}
\end{subequations}

\subsection{The stream function -- pressure interface formulation}\label{sec:stream_pressure} Motivated by the incompressibility condition provided in \eqref{eq:S2}, we define the spaces
\begin{gather*}
    \bZ := \{ \bv\in \bH^1_{\Gamma_\mathrm{S}}(\Omega_\mathrm{S}) : \vdiv \bv = 0 \} \quad \text{and} \quad \rH^2_{\Gamma_\mathrm{S}}(\Omega_\mathrm{S}) = \{\xi\in\rH^2(\Omega_\mathrm{S}) : \xi = 0 \text{ and } \nabla\xi\cdot \bn_\mathrm{S} = 0 \text{ on } \Gamma_\mathrm{S}\}.
\end{gather*}
By the stream function representation (see \cite[Chap.~I, Sect.~5.2 and Chap.~IV, Sect.~2.2]{gr1986}), for any $\bu \in \mathbf{Z}$ there exists $\chi \in \mathrm{H}^2(\Omega_\mathrm{S})\setminus \mathbb{R}$ such that $\bu = \bcurl \chi$. This identification allows us to arrive at the following stream function -- pressure weak formulation for the Stokes--Darcy interface problem: given $\fb\in \bL^2(\Omega_\mathrm{S})$ and %$\btau^\mathrm{S}_2\in \bH^{-\frac{1}{2}}(\Gamma_\mathrm{S}_2)$,
$g\in\rL^2_0(\Omega_\mathrm{D})$ % and %$\varrho^\mathrm{D}_2\in\rH^{-\frac{1}{2}}(\Gamma_\mathrm{D})$
; find $(\chi,\varphi)\in \rH^2_{\Gamma_\mathrm{S}}(\Omega_\mathrm{S}) \times \rH^1(\Omega_\mathrm{D})$ such that 
\begin{subequations}\label{eq:WSP}
    \begin{alignat}{2}
        \mu\int_{\Omega_\mathrm{S}} \nabla^2\chi : \nabla^2\xi + \frac{\alpha\mu}{\sqrt{\kappa}} \int_\Sigma (\nabla \chi\cdot\bn_\Sigma) (\nabla \xi\cdot\bn_\Sigma) + \int_\Sigma \varphi \nabla \xi\cdot\bt_\Sigma &= \int_{\Omega_\mathrm{S}} \fb \cdot \bcurl \xi, &&\,\, \forall \xi\in \rH^2_{\Gamma_\mathrm{S}}(\Omega_\mathrm{S}), \label{eq:WSP1} \\ % && \notag \\ 
        %& \quad + \langle\btau^\mathrm{S}_2, \bcurl \xi\rangle_{\Gamma_\mathrm{S}_2}, &&\quad \forall \xi\in \rH^2_{\Gamma_\mathrm{S}}(\Omega), \label{eq:WSP1} \\
        -\kappa\int_{\Omega_\mathrm{D}} \nabla \varphi \cdot \nabla \psi + \int_\Sigma \psi \nabla \chi\cdot\bt_\Sigma &= -\int_{\Omega_\mathrm{D}}g\psi,  && \,\, \forall \psi\in \rH^1(\Omega_\mathrm{D}). \label{eq:WSP2}
    \end{alignat}
\end{subequations}

We now define the bilinear forms $a(\bullet,\bullet):\rH^2_{\Gamma_\mathrm{S}}(\Omega_\mathrm{S})\times \rH^2_{\Gamma_\mathrm{S}}(\Omega_\mathrm{S})\to \bbR$, $b(\bullet,\bullet):\rH^2_{\Gamma_\mathrm{S}}(\Omega_\mathrm{S})\times \rH^1(\Omega_\mathrm{D}) \to \bbR$, and $c(\bullet,\bullet):\rH^1(\Omega_\mathrm{D})\times \rH^1(\Omega_\mathrm{D})\to \bbR$ together with the linear forms $\rF(\bullet):\rH^2_{\Gamma_\mathrm{S}}(\Omega_\mathrm{S})\to \bbR$ and $\rG(\bullet):\rH^1(\Omega_\mathrm{D})\to \bbR$ given by
\begin{gather*}
    a(\chi,\xi) = \langle \bA \chi,\xi \rangle := \mu\int_{\Omega_\mathrm{S}} \nabla^2\chi : \nabla^2\xi + \frac{\alpha\mu}{\sqrt{\kappa}} \int_\Sigma (\nabla \chi\cdot\bn_\Sigma) (\nabla \xi\cdot\bn_\Sigma) =: a^{\nabla^2}(\chi,\xi) + a^{\Sigma}(\chi,\xi),\\
    b(\xi,\psi) = \langle \bB \xi, \psi \rangle := \int_\Sigma \psi \nabla \xi\cdot\bt_\Sigma,\quad
    c(\varphi,\psi) = \langle \bC \varphi, \psi \rangle := \kappa\int_{\Omega_\mathrm{D}} \nabla \varphi \cdot \nabla \psi,\\
    \rF(\xi) := \int_{\Omega_\mathrm{S}} \fb \cdot \bcurl \xi,\quad
    \rG(\psi) := -\int_{\Omega_\mathrm{D}}g\psi,
\end{gather*}
where $\bA:\rH^2_{\Gamma_\mathrm{S}}(\Omega_\mathrm{S})\to[\rH^2_{\Gamma_\mathrm{S}}(\Omega_\mathrm{S})]'$, $\bB:\rH^2_{\Gamma_\mathrm{S}}(\Omega_\mathrm{S})\to [\rH^1(\Omega_\mathrm{D})]'$, and $\bC:\rH^1(\Omega_\mathrm{D})\to [\rH^1(\Omega_\mathrm{D})]'$ are the linear operators induced by $a(\bullet,\bullet)$, $b(\bullet,\bullet)$, and $c(\bullet,\bullet)$, respectively. Note that $\rF\in [\rH^2_{\Gamma_\mathrm{S}}(\Omega_\mathrm{S})]'$ and $\rG\in [\rH^1(\Omega_\mathrm{D})]'$, where the notation $[\rH^2_{\Gamma_\mathrm{S}}(\Omega_\mathrm{S})]'$ (resp. $[\rH^1(\Omega_\mathrm{D})]'$) indicates the dual space of $\rH^2_{\Gamma_\mathrm{S}}(\Omega_\mathrm{S})$ (resp. $\rH^1(\Omega_\mathrm{D})$). These definitions lead to the following perturbed saddle--point problem (equivalent to \eqref{eq:WSP}) written in operator form, i.e., in the dual of the solution space, given by
\begin{equation}\label{eq:operator_form} 
    \begin{pmatrix}
        \bA & \bB^* \\
        \bB & -\bC 
    \end{pmatrix}
    \begin{pmatrix}
        \chi \\ 
        \varphi
    \end{pmatrix}
    = 
    \begin{pmatrix}
        \rF \\ 
        \rG    
    \end{pmatrix} 
    ,\quad \text{in }[\rH^2_{\Gamma_\mathrm{S}}(\Omega_\mathrm{S})\times \rH^1(\Omega_\mathrm{D})]'. 
\end{equation}

\subsection{Well-posedness of the continuous problem}\label{sec:wp}
We now aim to prove the well-posedness of the problem provided in \eqref{eq:operator_form}. First, we collect in the lemma below the properties of the linear operators involved in the formulation.
\begin{lemma}[properties of the continuous operators]\label{lem:properties_linear_ops}
    The following bounds hold:
    \begin{subequations}\label{eq:bounds}
        \begin{alignat}{2}
        |\langle \bA\chi,\xi \rangle| & \lesssim \mu\max\left\{1,\frac{\alpha}{\sqrt{\kappa}}\right\} \|\chi\|_{2,\Omega_\mathrm{S}}\|\xi\|_{2,\Omega_\mathrm{S}}, &&\quad \forall  \chi,\xi \in \rH^2_{\Gamma_\mathrm{S}}(\Omega_\mathrm{S}),\label{bound:a}\\
        \langle \bA\xi,\xi \rangle & \geq \mu \|\xi\|^2_{2,\Omega_\mathrm{S}}, &&\quad \forall  \xi \in \rH^2_{\Gamma_\mathrm{S}}(\Omega_\mathrm{S}),\label{coer:a}\\
        |\langle \bB \xi,\psi\rangle| & \lesssim \|\xi\|_{2,\Omega_\mathrm{S}}\|\psi\|_{1,\Omega_\mathrm{D}}, &&\quad \forall \xi \in \rH^2_{\Gamma_\mathrm{S}}(\Omega_\mathrm{S}),\, \forall\psi \in \rH^1(\Omega_\mathrm{D}),\label{bound:b}\\
        |\langle \bC\varphi,\psi\rangle| & \leq \kappa\|\varphi\|_{1,\Omega_\mathrm{D}}\|\psi\|_{1,\Omega_\mathrm{D}}, &&\quad  \forall \varphi,\psi \in \rH^1(\Omega_\mathrm{D}), \label{bound:c}\\
        %\langle \bC\psi,\psi\rangle & =  \kappa \|\psi\|^2_{1,\Omega_\mathrm{D}}, &&\quad \forall \psi \in \rH^1(\Omega_\mathrm{D}), \label{coer:c}\\
        |\rF(\xi)| & \lesssim \|\fb\|_{0,\Omega_\mathrm{S}}\|\xi\|_{2,\Omega_\mathrm{S}}, &&\quad \forall \xi \in \rH^2_{\Gamma_\mathrm{S}}(\Omega_\mathrm{S}), \label{bound:F}\\
        |\rG(\psi)| & \lesssim \|g\|_{0,\Omega_\mathrm{D}}\|\psi\|_{1,\Omega_\mathrm{D}}, &&\quad \forall \psi \in \rH^1(\Omega_\mathrm{D}).\label{bound:G}
        \end{alignat}
    \end{subequations}
  %  \rek{Is the coercivity constant correct? What about the interface term?} \aer{Coercivity is fine, I do not understand the second question.} \rrb{[I think this refers to what happens with the  $a^\Sigma$ term. For coercivity, we can simply use that  $a^\Sigma(\chi,\chi)$   is non-negative and $a^{\nabla^2}(\chi,\chi)$ suffices to get the coercivity in $H^2_{\Gamma_S}(\Omega_S)$. Another possibility is to redefine the norm for $\chi$ including also the normal derivatives on the interface (and with the weight $\alpha\mu/\sqrt{\kappa}$) so coercivity and boundedness hold with constant 1.]} \aer{Something like the robust paper? This could work but I'm not sure about the bounds for the coupling term $\bB$ and the right hand side (since there the norms are considering both spaces)} \rrb{I think we can leave it as is, but if we modify the norm for $\chi$ then we can simply weight with the inverse coefficient the norm for $\varphi$, and then the boundedness of B will still be fine, since you're bounding from above and can simply add the missing terms to complete the required inequality. \rek{No worries! Let's keep it as it is!!}}
\end{lemma}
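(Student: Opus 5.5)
The plan is to treat each bound separately, using only Cauchy--Schwarz, trace theorems on the Lipschitz subdomains $\Omega_\mathrm{S}$ and $\Omega_\mathrm{D}$, and a Friedrichs/Poincaré-type inequality on $\rH^2_{\Gamma_\mathrm{S}}(\Omega_\mathrm{S})$.

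\emph{Estimates \eqref{bound:c}, \eqref{bound:G} and \eqref{bound:F}.} These are immediate.
\begin{itemize}
\item For \eqref{bound:c}, Cauchy--Schwarz gives $|c(\varphi,\psi)|\le \kappa|\varphi|_{1,\Omega_\mathrm{D}}|\psi|_{1,\Omega_\mathrm{D}}$, which is bounded by the full norms.
\item For \eqref{bound:G}, Cauchy--Schwarz gives $|\rG(\psi)|\le \|g\|_{0,\Omega_\mathrm{D}}\|\psi\|_{0,\Omega_\mathrm{D}}$.
\item For \eqref{bound:F}, I use $\|\bcurl\xi\|_{0,\Omega_\mathrm{S}}=|\xi|_{1,\Omega_\mathrm{S}}\le\|\xi\|_{2,\Omega_\mathrm{S}}$.
\end{itemize}

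\emph{Estimates \eqref{bound:a} and \eqref{bound:b}.} These need the trace theorem. For $\xi\in\rH^2(\Omega_\mathrm{S})$ we have $\nabla\xi\in\bH^1(\Omega_\mathrm{S})$, so its trace lies in $\bH^{1/2}(\partial\Omega_\mathrm{S})\subset \bL^2(\partial\Omega_\mathrm{S})$. This gives $\|\nabla\xi\|_{0,\Sigma}\lesssim\|\nabla\xi\|_{1,\Omega_\mathrm{S}}\le\|\xi\|_{2,\Omega_\mathrm{S}}$. Because $\bn_\Sigma$ and $\bt_\Sigma$ are unit vectors, defined a.e.\ on the polygonal interface, both $\|\nabla\xi\cdot\bn_\Sigma\|_{0,\Sigma}$ and $\|\nabla\xi\cdot\bt_\Sigma\|_{0,\Sigma}$ are controlled the same way.
\begin{itemize}
\item For \eqref{bound:a}, Cauchy--Schwarz on the Hessian term gives $\mu|\chi|_{2,\Omega_\mathrm{S}}|\xi|_{2,\Omega_\mathrm{S}}$. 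On the interface term, Cauchy--Schwarz plus the trace estimate gives $\frac{\alpha\mu}{\sqrt\kappa}\|\chi\|_{2,\Omega_\mathrm{S}}\|\xi\|_{2,\Omega_\mathrm{S}}$. Summing and taking the maximum of the two coefficients yields \eqref{bound:a}.
\item For \eqref{bound:b}, the trace theorem on $\Omega_\mathrm{D}$ gives $\|\psi\|_{0,\Sigma}\lesssim\|\psi\|_{1,\Omega_\mathrm{D}}$. Combining this with the trace bound for $\nabla\xi\cdot\bt_\Sigma$ gives \eqref{bound:b}.
\end{itemize}

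\emph{Coercivity \eqref{coer:a}.} I first drop the interface contribution, since $a^\Sigma(\xi,\xi)=\frac{\alpha\mu}{\sqrt\kappa}\|\nabla\xi\cdot\bn_\Sigma\|^2_{0,\Sigma}\ge0$. This leaves $\langle\bA\xi,\xi\rangle\ge\mu\|\nabla^2\xi\|^2_{0,\Omega_\mathrm{S}}=\mu|\xi|^2_{2,\Omega_\mathrm{S}}$. It remains to show that $|\cdot|_{2,\Omega_\mathrm{S}}$ is a norm on $\rH^2_{\Gamma_\mathrm{S}}(\Omega_\mathrm{S})$ equivalent to $\|\cdot\|_{2,\Omega_\mathrm{S}}$. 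I would apply Friedrichs' inequality twice, assuming $\Gamma_\mathrm{S}$ has positive measure.
\begin{itemize}
\item The conditions $\xi=0$ and $\nabla\xi\cdot\bn_\mathrm{S}=0$ on $\Gamma_\mathrm{S}$ imply $\nabla\xi=\bzero$ there, since the tangential derivative of the vanishing trace also vanishes. Hence each component of $\nabla\xi$ lies in $\rH^1_{\Gamma_\mathrm{S}}(\Omega_\mathrm{S})$, and $|\xi|_{1,\Omega_\mathrm{S}}\lesssim|\xi|_{2,\Omega_\mathrm{S}}$.
\item Then $\xi\in\rH^1_{\Gamma_\mathrm{S}}(\Omega_\mathrm{S})$ gives $\|\xi\|_{0,\Omega_\mathrm{S}}\lesssim|\xi|_{1,\Omega_\mathrm{S}}$.
\end{itemize}

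\emph{Main obstacle.} The delicate point is the constant in \eqref{coer:a}. Literally, with the full norm, the inequality holds with $\mu$ multiplied by a Poincaré-dependent constant (equivalently, with $\gtrsim$ in place of $\ge$). With constant exactly $\mu$ it holds only if $\|\cdot\|_{2,\Omega_\mathrm{S}}$ is understood as the equivalent norm $|\cdot|_{2,\Omega_\mathrm{S}}$ on $\rH^2_{\Gamma_\mathrm{S}}(\Omega_\mathrm{S})$. I would adopt that interpretation and make it explicit.

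A secondary point is justifying that $\nabla\xi=\bzero$ on $\Gamma_\mathrm{S}$ when $\Gamma_\mathrm{S}$ is polygonal with corners. I would argue this edge by edge: on each straight edge the tangential derivative of the zero trace vanishes, and the normal derivative vanishes by assumption.
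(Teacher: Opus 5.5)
Your proposal is correct and follows the same route as the paper, whose proof just cites the Cauchy--Schwarz, trace and Poincar\'e inequalities. You supply the details and put Poincar\'e/Friedrichs where it is actually needed, namely in \eqref{coer:a}, rather than in \eqref{bound:F}--\eqref{bound:G}, where the full norms on the right already make it unnecessary. Your caveat on \eqref{coer:a} is also right: for a nonzero bump function compactly supported in $\Omega_\mathrm{S}$ the interface term vanishes and $\langle \bA\xi,\xi\rangle=\mu|\xi|_{2,\Omega_\mathrm{S}}^2<\mu\|\xi\|_{2,\Omega_\mathrm{S}}^2$, so the bound must be read either with $\gtrsim$ (as in the discrete analogue \eqref{coer:ah}) or with the seminorm, exactly as you propose.
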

\begin{proof}
    Standard arguments involving the Cauchy--Schwarz inequality, the trace inequality (used in \eqref{bound:a}, \eqref{bound:b}, and \eqref{bound:G}), and the Poincar\'e inequality (used in \eqref{bound:F} and \eqref{bound:G}), lead to the result.
    \qed
\end{proof}

Following \cite{gos2011}, the kernel of the system \eqref{eq:operator_form} is characterised next.
\begin{lemma}[kernel characterisation]\label{lem:ker}
    Let $(\chi,\varphi)\in \rH^2_{\Gamma_\mathrm{S}}(\Omega_\mathrm{S})\times \rH^1(\Omega_\mathrm{D})$ be a solution to the homogeneous system in \eqref{eq:operator_form}. Then, there exists $\lambda\in \bbR$ such that
    $$(\chi,\varphi) = (0,\lambda).$$
\end{lemma}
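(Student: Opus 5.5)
Set $\rF=0$ and $\rG=0$ in \eqref{eq:operator_form}. The homogeneous system then reads
\begin{equation*}
a(\chi,\xi)+b(\xi,\varphi)=0 \quad \forall \xi\in\rH^2_{\Gamma_\mathrm{S}}(\Omega_\mathrm{S}),\qquad b(\chi,\psi)-c(\varphi,\psi)=0 \quad \forall \psi\in\rH^1(\Omega_\mathrm{D}).
\end{equation*}
The plan is the standard energy argument for perturbed saddle-point systems. Take $\xi=\chi$ in the first equation and $\psi=\varphi$ in the second. Subtracting the second identity from the first makes the coupling terms $b(\chi,\varphi)$ cancel, since both equations contain the same form $b(\bullet,\bullet)=\int_\Sigma \psi\,\nabla\xi\cdot\bt_\Sigma$. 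This leaves
\begin{equation*}
a(\chi,\chi)+c(\varphi,\varphi)=0,
\end{equation*}
that is, $a^{\nabla^2}(\chi,\chi)+a^\Sigma(\chi,\chi)+\kappa\|\nabla\varphi\|^2_{0,\Omega_\mathrm{D}}=0$. All three terms are nonnegative because $\alpha,\mu,\kappa>0$. Hence each vanishes.

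Next I extract the conclusion for each variable. From the coercivity \eqref{coer:a} we get $\mu\|\chi\|^2_{2,\Omega_\mathrm{S}}\le a(\chi,\chi)=0$, so $\chi=0$. Coercivity holds because a function in $\rH^2_{\Gamma_\mathrm{S}}(\Omega_\mathrm{S})$ has vanishing trace and normal derivative on $\Gamma_\mathrm{S}$, a boundary portion of positive measure. A Poincaré--Friedrichs argument applied twice then controls the full $\rH^2$ norm by $|\chi|_{2,\Omega_\mathrm{S}}$. From $\kappa\|\nabla\varphi\|^2_{0,\Omega_\mathrm{D}}=0$ we get $\nabla\varphi=0$ a.e. in $\Omega_\mathrm{D}$. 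Assuming $\Omega_\mathrm{D}$ is connected, $\varphi\equiv\lambda$ for some constant $\lambda\in\bbR$.

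Finally, I check that every pair $(0,\lambda)$ really solves the homogeneous system, so the characterisation is sharp. Clearly $c(\lambda,\psi)=0$ and $b(0,\psi)=0$. The only nontrivial point is that $b(\xi,\lambda)=\lambda\int_\Sigma\nabla\xi\cdot\bt_\Sigma=0$ for all $\xi\in\rH^2_{\Gamma_\mathrm{S}}(\Omega_\mathrm{S})$. This holds because $\nabla\xi\cdot\bt_\Sigma$ is the tangential derivative of the trace of $\xi$ along $\Sigma$. Its integral equals the difference of the values of $\xi$ at the endpoints of $\Sigma$, and these endpoints lie on $\overline{\Gamma_\mathrm{S}}$, where $\xi=0$.

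This endpoint argument is the step I expect to need the most care. It uses the geometric assumption that $\Sigma$ is an open arc whose endpoints lie on $\overline{\Gamma_\mathrm{S}}$, together with the continuity of $\rH^2$ functions in two dimensions, which makes point values meaningful. If $\Sigma$ were a closed curve, the same integral vanishes by periodicity.

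The nonuniqueness in $\varphi$ matches the pure-Neumann Darcy data and the compatibility assumption $\int_{\Omega_\mathrm{D}}g=0$. It will later be removed by working in $\rH^1(\Omega_\mathrm{D})/\bbR$ or by imposing a zero-mean constraint.
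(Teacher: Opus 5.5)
Your proof is correct and is essentially the paper's argument. You test the homogeneous system with the solution itself so that the coupling terms $b(\chi,\varphi)$ cancel, then use the coercivity of $a(\bullet,\bullet)$ and $c(\varphi,\varphi)=\kappa|\varphi|_{1,\Omega_\mathrm{D}}^2$ to get $\chi=0$ and $\varphi$ constant; you are more careful than the paper about the sign manipulation and the need for $\Omega_\mathrm{D}$ connected, and your converse check via $\int_\Sigma\nabla\xi\cdot\bt_\Sigma=0$ is correct under your geometric assumption, though the lemma does not require it.
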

\begin{proof}
    Starting from \eqref{eq:operator_form}, we can rewrite the problem in operator form as
    $$\bA \chi + \bC \varphi = 0 \quad \text{in} \quad [\rH^2_{\Gamma_\mathrm{S}}(\Omega_\mathrm{S})\times \rH^1(\Omega_\mathrm{D})]'.$$
    Testing this equation with $(\chi,\varphi)\in \rH^2_{\Gamma_\mathrm{S}}(\Omega_\mathrm{S})\times \rH^1(\Omega_\mathrm{D})$ and using the coercivity of $\bA$ (cf. \eqref{coer:a}) together with the definition of $\bC$, we obtain
    $$\mu\|\chi\|_{2,\Omega_\mathrm{S}}^{2} + \kappa|\varphi|_{1,\Omega_\mathrm{D}}^{2}\leq 0.$$
    Therefore, we deduce that $\chi=0$ and  $\varphi=\lambda$ for some $\lambda\in \bbR$.
    \qed
\end{proof}
\begin{figure}[h!]
    \centering
    \subfigure[Immersed Stokes domain.\label{fig:domain_stokes_in_darcy}]{\includegraphics[width=0.34\textwidth,trim={0.cm 0.cm 13.4cm 0.cm},clip]{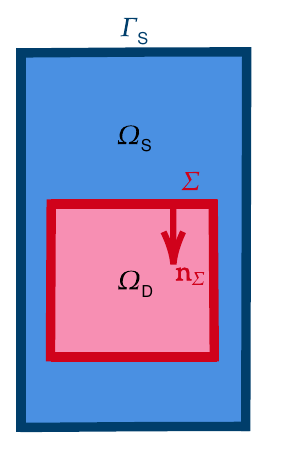}}
    \subfigure[Immersed Darcy domain.\label{fig:domain_darcy_in_stokes}]
    {\includegraphics[width=0.34\textwidth,trim={0.cm 0.cm 13.4cm 0.cm},clip]{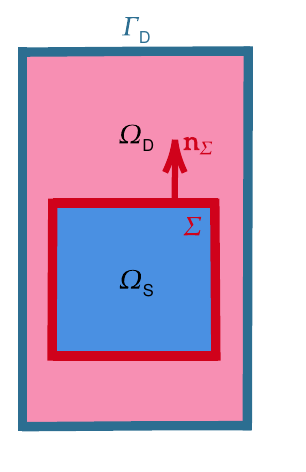}}
    
    \caption{Schematic representations of the two nested Stokes--Darcy domain configurations.}
    \label{fig:DarcyInsideStokes}
\end{figure}

\begin{remark}\label{rem:BC3}
    In addition to the domain setting discussed in Remark~\ref{rem:BCAlt}, we recall that two additional configurations can be taken into account:

    \begin{enumerate}
    \item A Darcy domain immersed in a Stokes domain. In this case, the simply connected assumption of the Stokes domain is not satisfied anymore (see Fig.~\ref{fig:domain_darcy_in_stokes}). Consequently, the stream function is not uniquely determined by the standard boundary conditions. Instead, $\chi=C_i\in \bbR$ on $\Gamma_{i}\subset\Sigma$, where $\Gamma_i$ is the $i$-th connected component of $\Sigma$.     As established in \cite{gr1986}, the following compatibility condition must be satisfied at the continuous level  
    \begin{equation}\label{eq:conditionHoles}
         \oint_{\Sigma} \frac{\partial(\Delta \psi)}{\partial n} \, \mathrm{d}\tau = 0.
     \end{equation}
    Such requirement can be used to uniquely determine the values of $C_i$ (see \cite{Lippke1991} for more details). The additional difficulties associated with the non-simply connected Stokes stream-function formulation do not affect the Darcy domain, and Lemma~\ref{lem:ker} remains valid.

    \item A Stokes domain immersed in a Darcy domain. Alternatively, the inverse configuration is depicted in Fig.~\ref{fig:domain_stokes_in_darcy}.  Since no boundary conditions are defined for the stream functions, we require a different functional setting in order to recover the coercivity (cf. \eqref{coer:a}) of the operator $\bA$. The space related to the stream function is defined by 
    \begin{align}\label{eq:zeroMeanStreamSpace}
        \rH^2_\star(\Omega_\mathrm{S}):= \left\{\xi \in \rH^2(\Omega_\mathrm{S})\setminus \bbR: \int_{\Omega_{\mathrm{S}}}\xi = 0, \int_{\Omega_{\mathrm{S}}}\nabla \xi = \bzero\right\},
    \end{align}
    which together with $\alpha>0$ and the Poincar\'e inequality establish the required coercivity.
    \end{enumerate}
\end{remark}

Observe that Lemma~\ref{lem:ker} naturally motivates the introduction of the Hilbertian space
$$\rH^1_\star(\Omega_\mathrm{D}):=\left\{\psi\in\rH^1(\Omega_\mathrm{D}): \int_{\Omega_\mathrm{D}}\psi = 0\right\},$$
which ensures the uniqueness of the Darcy pressure $\varphi$ in the kernel of the operator on the left-hand side of equation \eqref{eq:operator_form}, in particular, it enforces $\lambda=0$ in Lemma~\ref{lem:ker}. For this reason, in what follows we seek $\varphi\in\rH^1_\star(\Omega_\mathrm{D})$. Notice that this additional constraint allows us to deduce the equivalence between $\|\bullet\|_{1,\Omega_\mathrm{D}}$ and $|\bullet|_{1,\Omega_\mathrm{D}}$ on $\rH^1_\star(\Omega_\mathrm{D})$owing to the Poincar\'e inequality. Consequently, this property leads to the coercivity of the operator $C$, that is,
\begin{alignat}{2}
        \langle \bC\psi,\psi\rangle & \gtrsim  \kappa \|\psi\|^2_{1,\Omega_\mathrm{D}}, &&\quad \forall \psi \in \rH^1_\star(\Omega_\mathrm{D}). \label{coer:c}
\end{alignat}

\begin{remark}
    In light of Remark~\ref{rem:BCAlt}, we readily see that the natural Hilbert space associated to the Darcy pressure $\varphi$ (in both the full Dirichlet and mixed boundary conditions settings) is given by
    $$\rH^1_{\Gamma_\mathrm{D}}(\Omega_\mathrm{D}) := \{\psi\in \rH^1(\Omega_\mathrm{D}): \psi=0 \text{ on } \Gamma_\mathrm{D}\}.$$
    This observation yields an analogue of the coercivity result for the operator $\bC$ (cf. \eqref{coer:c}) in this space. On the other hand, following Remark \ref{rem:BC3}, we note that   \eqref{eq:BCAltS} calls for a different functional setting for the stream function  to  recover the coercivity of the operator $\bA$ (see \eqref{coer:a}). The appropriate choice is the space defined in \eqref{eq:zeroMeanStreamSpace}.
\end{remark}

Let us define the global operator $\cA:\left(\rH^2_{\Gamma_\mathrm{S}}(\Omega_\mathrm{S})\times \rH^1_\star(\Omega_\mathrm{D})\right)\times \left(\rH^2_{\Gamma_\mathrm{S}}(\Omega_\mathrm{S})\times \rH^1_\star(\Omega_\mathrm{D})\right)\to \bbR$ %and $\cF:\rH^2_{\Gamma_\mathrm{S}}(\Omega_\mathrm{S})\times \rH^1_\star(\Omega_\mathrm{D}) \to \bbR$ 
as
\begin{align*}
    \cA((\chi,\varphi),(\xi,\psi))&:=a(\chi,\xi)+b(\xi,\varphi)+b(\chi,\psi)-c(\varphi,\psi),%\\
    %\cF((\xi,\psi))&:= F(\xi) + G(\psi),
\end{align*}
together with the graph norm in the space $\rH^2_{\Gamma_\mathrm{S}}(\Omega_\mathrm{S})\times \rH^1_\star(\Omega_\mathrm{D})$ defined by $\|(\xi,\psi)\|^2:=\|\xi\|^2_{2,\Omega_\mathrm{S}}+\|\psi\|^2_{1,\Omega_\mathrm{D}}$.
We readily see that the boundedness of $\cA$ %and $\cF$ are
is provided by Lemma~\ref{lem:properties_linear_ops}. The global inf-sup condition for the operator $\cA$  is provided below.

\begin{lemma}[global inf-sup condition]\label{lem:global_inf-sup}
    The following bound holds:
    \begin{align*}
        \sup_{(\xi,\psi)\in\left(\rH^2_{\Gamma_\mathrm{S}}(\Omega_\mathrm{S})\times \rH^1_\star(\Omega_\mathrm{D})\right)\setminus \{(0,0)\}} \frac{\cA((\chi,\varphi),(\xi,\psi))}{\|(\xi,\psi)\|} \gtrsim \|(\chi,\varphi)\|, \quad \forall (\chi,\varphi)\in \rH^2_{\Gamma_\mathrm{S}}((\Omega_\mathrm{S})\times \rH^1_\star(\Omega_\mathrm{D}).
    \end{align*}
\end{lemma}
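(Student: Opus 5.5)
The form $\cA$ has a symmetric "perturbed saddle-point" structure: $a$ is coercive, $c$ is coercive on $\rH^1_\star(\Omega_\mathrm{D})$, and the coupling $b$ appears with the same sign in both slots. The plan is to exhibit, for a given $(\chi,\varphi)$, one explicit test pair that flips the sign of the Darcy component. This makes the off-diagonal terms cancel and turns the indefinite form into a positive one. No inf-sup condition on $b$ is then needed, consistent with the claim in the introduction.

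\textbf{Step 1 (test pair).} Fix $(\chi,\varphi)\in \rH^2_{\Gamma_\mathrm{S}}(\Omega_\mathrm{S})\times \rH^1_\star(\Omega_\mathrm{D})$ and choose $(\xi,\psi):=(\chi,-\varphi)$. This pair is admissible, since $-\varphi$ still has zero mean. Then
\begin{equation*}
\cA((\chi,\varphi),(\chi,-\varphi)) = a(\chi,\chi)+b(\chi,\varphi)-b(\chi,\varphi)+c(\varphi,\varphi)=a(\chi,\chi)+c(\varphi,\varphi).
\end{equation*}
We check the signs against the definition: $\cA$ contains $b(\xi,\varphi)+b(\chi,\psi)$, so with $\xi=\chi$ and $\psi=-\varphi$ these become $b(\chi,\varphi)-b(\chi,\varphi)=0$. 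Likewise $-c(\varphi,\psi)=c(\varphi,\varphi)$.

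\textbf{Step 2 (coercivity).} Apply \eqref{coer:a} to the first term and \eqref{coer:c} to the second. This gives
\begin{equation*}
\cA((\chi,\varphi),(\chi,-\varphi)) \gtrsim \mu\|\chi\|^2_{2,\Omega_\mathrm{S}}+\kappa\|\varphi\|^2_{1,\Omega_\mathrm{D}} \geq \min\{\mu,\kappa\}\,\|(\chi,\varphi)\|^2 .
\end{equation*}

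\textbf{Step 3 (conclusion).} Since $\|(\chi,-\varphi)\|=\|(\chi,\varphi)\|$, dividing by this norm bounds the supremum from below by $\min\{\mu,\kappa\}\,\|(\chi,\varphi)\|$. If $(\chi,\varphi)=(0,0)$ the statement is trivial.

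The hidden constant in the lemma then depends on $\mu$ and $\kappa$. This is acceptable under the paper's convention for $\gtrsim$ only if one tolerates that dependence, or else weights the graph norm by $\mu$ and $\kappa$. I would state explicitly that the constant is $\min\{\mu,\kappa\}$ times the Poincaré constant on $\rH^1_\star(\Omega_\mathrm{D})$.

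\textbf{Main obstacle.} There is no real analytical difficulty here. The one point that needs justification is \eqref{coer:a}, namely that $a^{\nabla^2}(\chi,\chi)$ controls the full $\rH^2$ norm on $\rH^2_{\Gamma_\mathrm{S}}(\Omega_\mathrm{S})$. This uses $\xi=\partial_n\xi=0$ on $\Gamma_\mathrm{S}$, which has positive measure, together with the Poincaré inequality applied twice; since the result is stated earlier, I take it as given. The remaining point is the mean-zero constraint on $\varphi$, which \eqref{coer:c} relies on and is why the space $\rH^1_\star(\Omega_\mathrm{D})$ is used.
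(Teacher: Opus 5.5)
Your proof is correct and is exactly the paper's argument: test with $(\xi,\psi)=(\chi,-\varphi)$ so the two coupling terms $b(\chi,\varphi)$ cancel, then apply the coercivity bounds \eqref{coer:a} and \eqref{coer:c}. Your remark that the hidden constant is $\min\{\mu,\kappa\}$, up to Poincar\'e constants, is a fair sharpening of what the paper leaves implicit.
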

\begin{proof}
    The proof follows by testing with $(\xi,\psi)=(\chi,-\varphi)$. Indeed, using the coercivity of the operators
    $\bA$ and $\bC$ (cf. \eqref{coer:a} and \eqref{coer:c}), we obtain the desired estimate. The details are standard and therefore omitted.
    \qed
\end{proof}
Finally, the well-posedness of the perturbed saddle--point problem and the continuous dependence on data follows from the global inf-sup above combined with a direct application of the Babu\v{s}ka--Brezzi theory.
\begin{theorem}[continuous well-posedness]\label{th:WP}
    There exists a unique solution $(\chi,\varphi)\in\rH^2_{\Gamma_\mathrm{S}}(\Omega_\mathrm{S})\times \rH^1_\star(\Omega_\mathrm{D})$ to \eqref{eq:operator_form} (equivalently, \eqref{eq:WSP}) such that 
    $$\|(\chi,\varphi)\|\lesssim\|\fb\|_{0,\Omega_\mathrm{S}}+\|g\|_{0,\Omega_\mathrm{D}}.$$
\end{theorem}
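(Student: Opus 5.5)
The plan is to apply the generalised Babu\v{s}ka--Brezzi (Banach--Ne\v{c}as--Babu\v{s}ka) theorem to the bilinear form $\cA$ on the Hilbert space $X:=\rH^2_{\Gamma_\mathrm{S}}(\Omega_\mathrm{S})\times \rH^1_\star(\Omega_\mathrm{D})$ with the graph norm $\|\bullet\|$. Problem \eqref{eq:operator_form} posed in $X'$ is written as: find $(\chi,\varphi)\in X$ such that
\begin{equation*}
\cA((\chi,\varphi),(\xi,\psi)) = \rF(\xi)+\rG(\psi), \quad \forall (\xi,\psi)\in X.
\end{equation*}
The sign convention is consistent with \eqref{eq:WSP}, since the second equation there reads $b(\chi,\psi)-c(\varphi,\psi)=\rG(\psi)$. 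Three ingredients are required: boundedness of $\cA$ and of the right-hand side, the inf-sup condition, and the non-degeneracy (adjoint) condition.

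\textbf{Step 1: boundedness.} Boundedness of $\cA$ follows from \eqref{bound:a}, \eqref{bound:b} and \eqref{bound:c}, which give $|\cA((\chi,\varphi),(\xi,\psi))|\lesssim \|(\chi,\varphi)\|\,\|(\xi,\psi)\|$ with a constant depending on $\mu,\alpha,\kappa$. The functional $(\xi,\psi)\mapsto \rF(\xi)+\rG(\psi)$ is bounded by $(\|\fb\|_{0,\Omega_\mathrm{S}}+\|g\|_{0,\Omega_\mathrm{D}})\|(\xi,\psi)\|$, by \eqref{bound:F} and \eqref{bound:G}.

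\textbf{Step 2: inf-sup.} The inf-sup condition is exactly Lemma~\ref{lem:global_inf-sup}. To recall its mechanism, take $(\xi,\psi)=(\chi,-\varphi)$. The two $b$-terms cancel and we obtain
\begin{equation*}
\cA((\chi,\varphi),(\chi,-\varphi))=a(\chi,\chi)+c(\varphi,\varphi)\gtrsim \min\{\mu,\kappa\}\|(\chi,\varphi)\|^2,
\end{equation*}
by \eqref{coer:a} and \eqref{coer:c}.

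\textbf{Step 3: non-degeneracy.} Suppose $(\xi,\psi)\in X$ satisfies $\cA((\chi,\varphi),(\xi,\psi))=0$ for all $(\chi,\varphi)\in X$. Since $a$ and $c$ are symmetric, $\cA$ is symmetric, so $\cA((\xi,\psi),(\chi,\varphi))=0$ for all $(\chi,\varphi)$. Applying Lemma~\ref{lem:global_inf-sup} to $(\xi,\psi)$ then gives $(\xi,\psi)=(0,0)$. Equivalently, Lemma~\ref{lem:ker} yields $(\xi,\psi)=(0,\lambda)$, and the zero-mean constraint in $\rH^1_\star(\Omega_\mathrm{D})$ forces $\lambda=0$.

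The Babu\v{s}ka--Brezzi theorem now gives existence and uniqueness in $X$. Combining the inf-sup bound with the equation and Step 1 gives the stability estimate:
\begin{equation*}
\|(\chi,\varphi)\|\lesssim \sup_{(\xi,\psi)\neq 0}\frac{\rF(\xi)+\rG(\psi)}{\|(\xi,\psi)\|}\lesssim \|\fb\|_{0,\Omega_\mathrm{S}}+\|g\|_{0,\Omega_\mathrm{D}}.
\end{equation*}

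The main subtlety is checking that posing the problem with $\rH^1_\star(\Omega_\mathrm{D})$ test functions is equivalent to testing with all of $\rH^1(\Omega_\mathrm{D})$, as in \eqref{eq:WSP}. I would handle it as follows. Any $\psi\in\rH^1(\Omega_\mathrm{D})$ splits as $\psi_\star+\bar\psi$ with $\bar\psi$ constant, so it suffices to test \eqref{eq:WSP2} with $\psi=1$. Then $c(\varphi,1)=0$ and $\rG(1)=-\int_{\Omega_\mathrm{D}}g=0$ because $g\in\rL^2_0(\Omega_\mathrm{D})$. The remaining term is
\begin{equation*}
b(\chi,1)=\int_\Sigma\nabla\chi\cdot\bt_\Sigma,
\end{equation*}
the integral of a tangential derivative along $\Sigma$, which equals the difference of the values of $\chi$ at the endpoints of $\Sigma$. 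Both endpoints lie on $\Gamma_\mathrm{S}$, where $\chi=0$, so $b(\chi,1)=0$. Hence the identity holds for constant test functions as well, and the solution in $X$ also solves \eqref{eq:WSP}, which completes the proof.
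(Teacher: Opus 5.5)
Your proof is correct and follows the same route as the paper, which simply invokes the global inf-sup condition of Lemma~\ref{lem:global_inf-sup} (obtained by testing with $(\chi,-\varphi)$) together with the Babu\v{s}ka--Brezzi theory. Two details the paper leaves implicit are handled soundly in your version: the symmetry argument for the adjoint non-degeneracy condition, and the check that $b(\chi,1)=\int_\Sigma\nabla\chi\cdot\bt_\Sigma=0$, which makes testing with $\rH^1_\star(\Omega_\mathrm{D})$ equivalent to testing with all of $\rH^1(\Omega_\mathrm{D})$ in \eqref{eq:WSP}.
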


\begin{comment}
    Now, we provide the inf--sup condition associated to the operator $\bB$ in the following lemma.
    \begin{lemma}
        There exist $\beta>0$ such that
        \begin{alignat*}{2} 
            \sup_{\xi\in\rH^2_{\Gamma_{\mathrm{D}}^{\mathrm{S}}}(\Omega_\mathrm{S})\setminus\{0\}} \frac{\langle \bB \xi,\psi\rangle}{\|\xi\|_{2,\Omega_\mathrm{S}}} &\geq \beta \|\psi\|_{1,\Omega_\mathrm{D}}, &&\quad \forall \xi\in\rH^1_\star(\Omega_\mathrm{D}).
        \end{alignat*}
    \end{lemma}
    \begin{proof}
        \aer{[TODO]}
    \end{proof}
\end{comment}

%%%%%%%%%%%%%%%%%%%%%%%%%%%%%%%%%%%%%%%%%%%%%%%%%
\section{Virtual element discretisation}\label{sec:vem}
This section introduces the conforming lowest-order virtual element discretisation, i.e., quadratic $C^1$--\ac{vem} for stream functions $\chi$ and linear $C^0$--\ac{vem} for pressure $\varphi$. In addition, we define the associated polynomial projection operators and provide the corresponding approximation and interpolation estimates. Finally, we introduce the discrete version of \eqref{eq:WSP}.

\paragraph{Admissible meshes and scaled monomials.} Let $\cT_h$ be a collection of polygonal meshes on $\Omega_\mathrm{S}\cup\Omega_\mathrm{D}$ and $\cE_h$ be the set of all edges. The diameter of a polygon $K\in\cT_h$ (resp. edge $e\in\cE_h$) is denoted by $h_K$ (resp. $h_e$). The maximum diameter of elements in $\cT_h$ is represented by $h$. It is assumed that there exists a constant $\eta>0$ such that
\begin{enumerate}[label={\textbf{(M\arabic*)}}, align=left, leftmargin=*, labelwidth=!, labelsep=1em]
    \item \label{A1} every polygon $K$ is star-shaped with respect to a ball of a radius greater than $\eta h_K$, 
    \item \label{A2} every edge $e\in \partial K$ satisfies the inequality $h_e \geq \eta h_K$. 
\end{enumerate}
In view of the boundary conditions,  we denote by $\bt_e$ and $\bn_e$ the unit tangential and normal vector of $e$ with respect to $\partial K$; we simply write $\bt$ and $\bn$ when the context is sufficiently clear. The polygonal mesh associated to $\Omega_\mathrm{S}$ (resp. $\Omega_\mathrm{D}$) is defined by $\cT_h(\Omega_\mathrm{S}):=\{K\in \cT_h: K\subset \Omega_\mathrm{S}\}$ (resp. $\cT_h(\Omega_\mathrm{D}):=\{K\in \cT_h: K\subset \Omega_\mathrm{D}\}$), while the set of edges lying in the interface $\Sigma$ is given by $\cE_h(\Sigma):=\{e\in\cE_h:e\subset\Sigma\}$ and $\cT_h(\Omega_\mathrm{S})\cap\cT_h(\Omega_\mathrm{D})=\cE_h(\Sigma)$.

Let $K\in\cT_h$ be a polygon with barycenter $\bx_K :=(x_{1,K},x_{2,K})^{\tt t}$, and $\rM_k(K)$ for $k\geq0$ be the set of scalar scaled monomials  defined by
\begin{align*}
    \rM_k(K):=\left\{ \left( \frac{\bx-\bx_P}{h_P} \right)^{\balpha}, 0\leq |\balpha|\leq k \right\},
\end{align*}
where $\boldsymbol{\alpha}=(\alpha_1,\alpha_2)^{\tt t}$ is a non-negative multi-index such that $\bx^{\balpha}:=x_1^{\alpha_1} x_2^{\alpha_2}$ with $\bx=(x_1,x_2)^{\tt t}$ and $|\boldsymbol{\alpha}|=\alpha_1+\alpha_2$. %\rek{Similarly, the vector version is defined by} $\bM_k:=\{(m,0):m\in\rM_k(K)\}\cup\{(0,m):m\in\rM_k(K)\}$.
Note that $\rM_k(K)$ %(resp. $\bM_k(K)$) 
defines a basis for the space of scalar %(resp. vector) 
polynomials of degree at most $k$, denoted by $\rP_k(K)$. %(resp. $\bP_k(K)$).

\subsection{Polynomial projection operators}\label{sec:proj_inter_op} For each polygon $K\in \cT_h$, we introduce the following local polynomial projection operators:
\begin{enumerate}
    \item  The $\rL^2$-projection $\Pi_1^{0,K}: \rL^2(K)\to\rP_1(K)$ is defined, for any $\phi\in \rL^2(K)$, by
    \begin{align}\label{L2_proj}
        \int_K (\Pi_1^{0,K}\phi-\phi) m_1 = 0,\quad \forall m_1\in\rM_1(K).
    \end{align}
    % Moreover, the vectorial counterpart is denoted by $\bPi_1^{0,K}$, and is defined analogously.
    \item  The $\rH^1$-projection $\Pi_1^{\nabla,K}: \rH^1(K)\to\rP_1(K)$ is defined, for any $\phi\in \rH^1(K)$, by
    \begin{subequations}\label{H1_proj}
        \begin{align}
            \int_K \nabla(\Pi_1^{\nabla,K}\phi-\phi)\cdot \nabla m_1 &= 0, \quad \forall m_1\in\rM_1(K),\\
            \int_{\partial K} \Pi_1^{\nabla,K}\phi-\phi &=  0.
        \end{align}
    \end{subequations}
    \item The $\rH^2$-projection $\Pi_2^{\nabla^2,K}: \rH^2(K)\to\rP_2(K)$ is defined, for any $\phi\in \rH^2(K)$, by
    \begin{subequations}\label{H2_proj}
        \begin{align}
            \int_K \nabla^2(\Pi_2^{\nabla^2,K}\phi-\phi) : \nabla^2 m_2 &= 0, \quad \forall m_2\in\rM_2(K),\\
            \int_{\partial K}\Pi_2^{\nabla^2,K}\phi - \phi &= 0, \\
            \int_{\partial K} \nabla(\Pi_2^{\nabla^2,K}\phi - \phi) &= 0.
        \end{align}
    \end{subequations}
\end{enumerate}

We finalise this section by recalling a classical result from polynomial approximation theory \cite{brenner08}. 

\begin{lemma}[polynomial projection estimates] \label{lem:est_poly}
    For any $K\in \cT_h$, suppose that $\phi\in  \rH^{s}(K)$ and $\phi_\pi\in \rP_{k+1}(K)$, with  $0 \leq s \leq k+1$. Then, there exists a positive constant that depends only on $\eta$ ({cf.} \textup{\ref{A1}-\ref{A2}}) such that, for $0\leq r\leq s$ the following estimate holds
    \begin{align*}
        |\phi - \phi_\pi|_{r,K} &\lesssim  h_K^{s-r}|\phi|_{s,K}.
    \end{align*} 
\end{lemma}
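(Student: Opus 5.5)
The statement should be read as an existence result: for every $\phi\in\rH^s(K)$ there is some $\phi_\pi\in\rP_{k+1}(K)$ satisfying the bound. The natural choice is the averaged Taylor polynomial, and the argument is the Bramble--Hilbert/Dupont--Scott route of \cite{brenner08}. I would first treat integer $s$ and then interpolate to reach fractional $s$.

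\textbf{Integer $s$.} Fix $K\in\cT_h$ and let $B\subset K$ be the ball of radius $\rho\geq\eta h_K$ from \ref{A1}, with centre $\bx_B$.
\begin{itemize}
\item Take a smooth cut-off $\omega$ supported in $B$ with $\int_B\omega=1$ and $|\omega|\lesssim\rho^{-2}$.
\item Set
\begin{equation*}
\phi_\pi(\bx):=\int_B \omega(\boldsymbol{y})\sum_{|\balpha|\leq s-1}\frac{\partial^{\balpha}\phi(\boldsymbol{y})}{\balpha!}(\bx-\boldsymbol{y})^{\balpha}\,d\boldsymbol{y}.
\end{equation*}
This polynomial has degree at most $s-1\leq k$, so $\phi_\pi\in\rP_{k+1}(K)$.
\item Two properties are needed. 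First, averaged Taylor expansion commutes with differentiation: $\partial^{\boldsymbol{\beta}}\phi_\pi$ is the averaged Taylor polynomial of degree $s-1-|\boldsymbol{\beta}|$ of $\partial^{\boldsymbol{\beta}}\phi$. Second, the integral remainder $\phi-\phi_\pi$ is a Riesz-potential-type operator applied to the derivatives of order $s$.
\end{itemize}
Because $K$ is star-shaped with respect to $B$, the kernel of this potential is bounded by $C(\gamma)|\bx-\boldsymbol{y}|^{s-2}$. Here the chunkiness parameter $\gamma=h_K/\rho\leq\eta^{-1}$ is controlled by \ref{A1}. Young's inequality then yields $|\phi-\phi_\pi|_{r,K}\lesssim h_K^{s-r}|\phi|_{s,K}$ for every integer $0\leq r\leq s$.

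\textbf{Scaling.} To see that the constant depends only on $\eta$, map $K$ by $\hat\bx=(\bx-\bx_B)/h_K$ to a domain $\hat K$ of unit diameter, star-shaped with respect to a ball of radius $\geq\eta$. On $\hat K$ the estimate holds with a constant depending only on $\eta$, $k$ and $s$. The factors $h_K^{s-r}$ then come from the chain rule and the change of variables: $|\phi|_{r,K}=h_K^{1-r}|\hat\phi|_{r,\hat K}$ in two dimensions. Assumption \ref{A2} is not needed for this step.

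\textbf{Fractional $s$ (main obstacle).} For non-integer $s$, or non-integer $r$, I would use the Dupont--Scott fractional version of the Bramble--Hilbert lemma. The natural first attempt is to write $s=m+\theta$ with $m=\lfloor s\rfloor$ and $0<\theta<1$, and to interpolate the bounded linear map $\phi\mapsto\phi-\phi_\pi$ between $\rH^m(\hat K)$ and $\rH^{m+1}(\hat K)$ on the reference domain. The catch is that interpolation controls full norms, not seminorms. To recover the seminorm $|\phi|_{s,\hat K}$ on the right-hand side, one must quotient by $\rP_m$. This is exactly where the Dupont--Scott argument is required rather than bare interpolation. Since the paper uses the lemma only as a classical tool, I would cite \cite{brenner08} for this step instead of reproducing it. The remaining issue is uniformity of constants over the family of reference shapes, which is secured by the uniform chunkiness bound.
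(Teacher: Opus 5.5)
Your proposal is correct and takes essentially the paper's route. The paper gives no argument of its own: it simply recalls the classical Bramble--Hilbert estimate from \cite{brenner08}, and your averaged Taylor polynomial, Riesz-potential, scaling and Dupont--Scott (fractional case) sketch is exactly that classical proof. Your reading of the statement as an existence claim for some $\phi_\pi\in\rP_{k+1}(K)$ is the right one, since for an arbitrary $\phi_\pi$ the bound fails. Keep in mind that the paper later applies the bound with $\phi_\pi$ equal to the specific projections $\Pi_1^{0,K}\phi$, $\Pi_1^{\nabla,K}\phi$, $\Pi_2^{\nabla^2,K}\phi$; this needs the extra standard quasi-optimality of these projectors (best approximation in the relevant seminorm, plus a Poincar\'e- or inverse-type step for lower-order seminorms), which your existence argument alone does not supply.
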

\begin{remark}
    In practice, the polynomial approximation $\phi_\pi$ of $\phi\in \rH^s(K)$ corresponds to the polynomial projections $\Pi_{1}^{0,K}\phi$, $\Pi_{1}^{\nabla,K}\phi$, or $\Pi_{2}^{\nabla^2,K}\phi$ for $s\ge 0, 1,$ or $2$, respectively. 
\end{remark}

\subsection{\texorpdfstring{$C^1$}{C1}--\texorpdfstring{\ac{vem}}{VEM} space}\label{sec:C1VEM} 
To approximate the functional space associated to the stream functions we consider the following local enhanced virtual space for any $K\in \cT_h(\Omega_\mathrm{S})$ given by
\begin{align*}
    \Xi_h(K)
    :=\biggl\{\xi_h\in \rH^2(K) : &\, \Delta^2 \xi_h\in\rP_2(K),\\
    &\, \xi_h|_{\partial K}\in C^0(\partial K),\,
    \xi_h|_e\in\rP_3(e),\, \forall e\in\partial K,\\
    &\, \nabla \xi_h|_{\partial K}\in [C^0(\partial K)]^2,\,
    (\nabla\xi_h\cdot \bn_e)|_e\in\rP_1(e),\, \forall e\in\partial K,\\
    &\, \int_{K} (\Pi_2^{\nabla^2} \xi_h-\xi_h) m_2=0,\, \forall m_2\in\rM_{2}(K)\biggr\}.
\end{align*}
We select the unisolvent set of \acp{dof} for $\Xi_h(K)$ as follows:
\begin{enumerate}[
    label={($\mathbf{D\arabic*}_{\xi_h}$)}, 
    leftmargin=\leftmargini + 2.25em, 
    align=left,               
    labelsep=0.5em            
]
    \item the values of $\xi_h$ at the vertices of $K$,
    \item the values of $h_{V_i}\nabla \xi_h$ at the vertices of $K$,
\end{enumerate}
where $h_{V_i}$ corresponds to the average of diameters $h_K$ corresponding to the elements $K$ that have $V_i$ as a vertex. Thanks to the enhancement condition, the projection operators $\Pi_{2}^{\nabla^2,K}$ and $\Pi_{2}^{0,K}$ coincide (see \cite{MRV2018}). Furthermore, they can be directly computed from the \acp{dof} provided above. We refer to \cite{absv2016} for further details regarding unisolvence and computability. The global virtual space for stream functions is defined as follows
\begin{align*}
  \Xi_h :=\left\{\xi_h\in\rH^2_{\Gamma_\mathrm{S}}(\Omega_\mathrm{S}): \xi_h|_K\in \Xi_h(K),\, \forall K\in\cT_h(\Omega_\mathrm{S})\right\}.
\end{align*}
Note that the space $\Xi_h$ has $3N_\mathrm{S}$ \acp{dof}, where $N_\mathrm{S}$ is the total number of interior vertices of the polygonal mesh $\cT_h(\Omega_\mathrm{S})$. Moreover, notice that the patching of local spaces across the mesh skeleton ensures global $C^1$-conformity, as the shared degrees of freedom uniquely determine both the function and its normal derivative along element interfaces.

\subsection{\texorpdfstring{$C^0$}{C0}--\texorpdfstring{\ac{vem}}{VEM} space}\label{sec:C0VEM}
 To approximate the Darcy pressure space, for any $K\in \cT_h(\Omega_\mathrm{D})$ we consider the local enhanced virtual space given by
 \begin{align*}
     \Psi_h(K) := \bigg\{ \psi_h \in \rH^1(K): &\, \Delta \psi_h \in \rP_1(K),\\
     &\,\psi_h|_e\in C^0(\partial K),\, \psi_h|_e\in\rP_1(e),\, \forall e\in\partial K,\\
     &\int_K(\Pi_{1}^{\nabla,K}\psi_h-\psi_h)m_1=0,\,\forall m_1\in\rM_1(K)\bigg\},
 \end{align*}
equipped with the following unisolvent set of \acp{dof}:
\begin{enumerate}[
    label={($\mathbf{D\arabic*}_{\psi_h}$)}, 
    leftmargin=\leftmargini + 2.25em, 
    align=left,               
    labelsep=0.5em            
]
    \item The values of $\psi_h$ at the vertices of $K$.
\end{enumerate}
We recall that the projection operators $\Pi_{1}^{\nabla,K}$ and $\Pi_{1}^{0,K}$ coincide on $\Psi_h(K)$ and are computable via the previously defined \acp{dof}. Additionally, the unisolvence of the space is established in \cite{bbcmmr13,bbmr13}. Similarly, the global \ac{vem} pressure space is defined by 
\begin{align*}
    \Psi_h:=\left\{\psi_h\in \rH^1_\star(\Omega_\mathrm{D}): \psi_h|_K\in \Psi_h(K),\, \forall K\in\cT_h(\Omega_\mathrm{D})\right\}.
\end{align*}
For this space, there are $N_\mathrm{D}$ total number of \acp{dof} for the space $\Psi_h$, with $N_\mathrm{D}$ denoting the number of interior vertices of the polygonal mesh 
$\cT_h(\Omega_\mathrm{D})$. In addition, the global space $\Xi_h$ is constructed by gluing the local spaces $\Xi_h(K)$ across the mesh skeleton, where the matching of vertex values guarantees $C^0$-continuity across element interfaces. Finally, at the implementation level, the zero mean condition of the space $\rH^1_\star(\Omega_\mathrm{D})$ is imposed through a scalar Lagrange multiplier.

\subsection{Discrete formulation and unique solvability}\label{sec:disc_formulation}
First, we define the discrete bilinear forms $a_h(\bullet,\bullet):\Xi_h\times \Xi_h\to \bbR$, $b(\bullet,\bullet):\Xi_h\times \Psi_h\to \bbR$, $c_h(\bullet,\bullet):\Psi_h\times \Psi_h\to \bbR$, and the linear forms $\rF_h(\bullet):\Xi_h\to \bbR$, $\rG_h(\bullet):\Psi_h\to \bbR$ locally as
\begin{align*}
    a_h(\chi_h,\xi_h) &= \langle \bA_h \chi_h,\xi_h \rangle = \sum_{K\in\cT_h(\Omega_\mathrm{S})} a_h^K(\chi_h,\xi_h) %= \sum_{K\in\cT_h(\Omega_\mathrm{S})} \langle \bA_h^K \chi_h,\xi_h \rangle\\
    = \sum_{K\in\cT_h(\Omega_{\mathrm{S}})} \left[ a^{\nabla^2,K}_h(\chi_h,\xi_h) + a^{\Sigma,K}(\chi_h,\xi_h) \right]\\
    &:= \sum_{K\in\cT_h(\Omega_\mathrm{S})} \bigg[\mu\int_{K} \nabla^2(\Pi_{2}^{\nabla^2,K}\chi_h) : (\nabla^2\Pi_{2}^{\nabla^2,K}\xi_h) + S^K_\mathrm{S}(\chi_h-\Pi_{2}^{\nabla^2,K}\chi_h,\xi_h-\Pi_{2}^{\nabla^2,K}\xi_h)\\
    & \quad + \sum_{e\in \cE_h(\Sigma)\cap \partial K} \frac{\alpha\mu}{\sqrt{\kappa}}\int_e (\nabla \chi_h\cdot\bn_\Sigma) (\nabla \xi_h\cdot\bn_\Sigma)\bigg],\\
    b(\xi_h,\psi_h) &= \langle \bB \xi_h, \psi_h \rangle = \sum_{K\in\cT_h} b^{K}(\xi_h,\psi_h) \\ %= \sum_{K\in\cT_h} \langle \bB^{K} \xi_h, \psi_h \rangle \\
    &:= \sum_{K\in\cT_h}\sum_{e\in\cE_h(\Sigma)\cap \partial K}\frac{1}{2}\int_e \psi_h \nabla \xi_h\cdot\bt_\Sigma = \sum_{e\in\cE_h(\Sigma)}\int_e \psi_h \nabla \xi_h\cdot\bt_\Sigma,\\
    c_h(\varphi_h,\psi_h) &= \langle \bC_h \varphi_h, \psi_h \rangle = \sum_{K\in\cT_h(\Omega_\mathrm{D})}c_h^K(\varphi_h,\psi_h) \\%= \sum_{K\in\cT_h(\Omega_\mathrm{D})}\langle \bC_h^K \varphi_h, \psi_h \rangle\\
    &:= \sum_{K\in\cT_h(\Omega_\mathrm{D})}\kappa\int_{K} \nabla (\Pi_{1}^{\nabla,K}\varphi_h) \cdot \nabla (\Pi_{1}^{\nabla,K}\psi_h) + S^K_\mathrm{D}(\varphi_h-\Pi_{1}^{\nabla,K}\varphi_h,\psi_h-\Pi_{1}^{\nabla,K}\psi_h),\\
    \rF_h(\xi_h) &= \sum_{K\in\cT_h(\Omega_\mathrm{S})} \rF_h^K(\xi_h) := \sum_{K\in\cT_h(\Omega_\mathrm{S})} \int_{K} \fb \cdot \bcurl (\Pi_{2}^{\nabla^2,K}\xi_h),\\
    \rG_h(\psi_h) &= \sum_{K\in\cT_h(\Omega_\mathrm{D})} \rG_h^K(\psi_h) := -\sum_{K\in\cT_h(\Omega_\mathrm{D})} \int_{K}g\Pi_{1}^{0,K}\psi_h,
\end{align*}
where $\bA_h:\Xi_h\to(\Xi_h)'$ (resp. $\bC_h:\Psi_h \to (\Psi_h)'$) is the linear operator induced by $a_h(\bullet,\bullet)$ (resp. $c_h(\bullet,\bullet)$), the superscript $K$ indicates the restriction to the polygon $K$, and the stabilisation operators $S^K_\mathrm{S}:\Xi_h(K)\times\Xi_h(K) \to \bbR$ and $S^K_\mathrm{D}:\Psi_h(K)\times\Psi_h(K) \to \bbR$ are any positive semi-definite inner products satisfying 
\begin{subequations}\label{eq:stabilisation}
    \begin{alignat}{2}  
        a^{\nabla^2,K}(\xi_h,\xi_h)&\lesssim S^K_\mathrm{S}(\xi_h,\xi_h)\lesssim a^{\nabla^2,K}(\xi_h,\xi_h), &&\qquad \forall \xi_h\in \mathrm{ker}(\Pi_{2}^{\nabla^2,K}),\label{eq:stabilisationS}\\
        c^{K}(\psi_h,\psi_h)&\lesssim S^K_\mathrm{D}(\psi_h,\psi_h)\lesssim c^{K}(\psi_h,\psi_h), &&\qquad \forall \psi_h\in \mathrm{ker}(\Pi_{1}^{\nabla,K}).\label{eq:stabilisationD}
    \end{alignat}
\end{subequations}

Next, we introduce the discrete version of the stream function -- pressure weak formulation for the Stokes--Darcy interface problem (cf. \eqref{eq:WSP}) as follows: 
given $\fb\in \bL^2(\Omega_\mathrm{S})$ and
$g\in\rL^2_0(\Omega_\mathrm{D})$, find $(\chi_h,\varphi_h)\in \Xi_h \times \Psi_h$ such that 
\begin{subequations}\label{eq:WSP_h}
    \begin{alignat}{2}
        a_h(\chi_h,\xi_h) + b(\xi_h,\varphi_h) &= \rF_h(\xi_h), &&\,\,\quad \forall \xi_h\in \Xi_h, \label{eq:WSP_h1} \\
        b(\chi_h,\psi_h) - c_h(\varphi_h,\psi_h) &= \rG_h(\psi_h),  && \,\,\quad \forall \psi_h\in \Psi_h. \label{eq:WSP_h2}
    \end{alignat}
\end{subequations}
Similarly, the respective perturbed saddle-point (in operator form) equivalent to \eqref{eq:WSP_h} reads
\begin{equation}\label{eq:operator_form_h} 
    \begin{pmatrix}
        \bA_h & \bB^* \\
        \bB & -\bC_h 
    \end{pmatrix}
    \begin{pmatrix}
        \chi_h \\ 
        \varphi_h
    \end{pmatrix}
    = 
    \begin{pmatrix}
        \rF_h \\ 
        \rG_h    
    \end{pmatrix} 
    ,\quad \text{in } [\Xi_h\times \Psi_h]'. 
\end{equation}

The properties of the discrete linear operators are provided in the following lemma, recalling that the relation $\|\bullet\|^2_{s,\Omega_\square}=\displaystyle{\sum_{K\in \cT_h(\Omega_\square)}}\|\bullet\|^2_{s,K}$ holds for $s\in\{0,1,2\}$ and $\square\in\{\mathrm{S},\mathrm{D}\}$. 
\begin{lemma}[properties of the discrete operators]\label{lem:properties_discrete_operators}
    The following bounds hold
    \begin{subequations}\label{eq:boundsh}
        \begin{alignat}{2}
        |\langle \bA_h\chi_h,\xi_h \rangle| & \lesssim \mu\max\left\{1,\frac{\alpha}{\sqrt{\kappa}}\right\} \|\chi_h\|_{2,\Omega_\mathrm{S}}\|\xi_h\|_{2,\Omega_\mathrm{S}}, &&\quad \forall  \chi_h,\xi_h \in \Xi_h,\label{bound:ah}\\
        \langle \bA_h\xi_h,\xi_h \rangle & \gtrsim \mu \|\xi_h\|^2_{2,\Omega_\mathrm{S}}, &&\quad \forall  \xi_h \in \Xi_h,\label{coer:ah}\\
        %|\langle \bB_h \xi_h,\psi_h\rangle| & \lesssim \|\xi_h\|_{2,\Omega_\mathrm{S}}\|\psi_h\|_{1,\Omega_\mathrm{D}}, &&\quad \forall \xi_h \in \Xi_h,\, \forall\psi_h \in \Psi_h,\label{bound:bh}\\
        |\langle \bC_h\varphi_h,\psi_h\rangle| & \lesssim \kappa\|\varphi_h\|_{1,\Omega_\mathrm{D}}\|\psi_h\|_{1,\Omega_\mathrm{D}}, &&\quad  \forall \varphi_h,\psi_h \in \Psi_h, \label{bound:ch}\\
        \langle \bC_h\psi_h,\psi_h\rangle & \gtrsim  \kappa \|\psi_h\|^2_{1,\Omega_\mathrm{D}}, &&\quad \forall \psi_h \in \Psi_h, \label{coer:ch}\\
        |\rF_h(\xi_h)| & \lesssim \|\fb\|_{0,\Omega_\mathrm{S}}\|\xi_h\|_{2,\Omega_\mathrm{S}}, &&\quad \forall \xi_h \in \Xi_h, \label{bound:Fh}\\
        |\rG_h(\psi_h)| & \lesssim \|g\|_{0,\Omega_\mathrm{D}}\|\psi_h\|_{1,\Omega_\mathrm{D}}, &&\quad \forall \psi_h \in \Psi_h.\label{bound:Gh}
        \end{alignat}
    \end{subequations}
\end{lemma}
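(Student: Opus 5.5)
The proof works element by element and then sums over the mesh, using the identity $\|\bullet\|^2_{s,\Omega_\square}=\sum_K\|\bullet\|^2_{s,K}$. The only tools are the stability properties of the polynomial projections and the stabilisation bounds \eqref{eq:stabilisation}.

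\textbf{Bulk terms of $\bA_h$.} For the bound \eqref{bound:ah}, fix $K\in\cT_h(\Omega_\mathrm{S})$ and write $\xi_h=\Pi_2^{\nabla^2,K}\xi_h+(I-\Pi_2^{\nabla^2,K})\xi_h$. By definition \eqref{H2_proj}, $\Pi_2^{\nabla^2,K}$ is the $\nabla^2$-orthogonal projection, so $|\Pi_2^{\nabla^2,K}\xi_h|_{2,K}\le|\xi_h|_{2,K}$ and $|(I-\Pi_2^{\nabla^2,K})\xi_h|_{2,K}\le|\xi_h|_{2,K}$. The consistency part is then bounded by $\mu|\chi_h|_{2,K}|\xi_h|_{2,K}$ via Cauchy--Schwarz. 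The stabilisation $S^K_\mathrm{S}$ is a semi-inner product, so Cauchy--Schwarz applies to it as well. Combined with the upper bound in \eqref{eq:stabilisationS} on $\ker\Pi_2^{\nabla^2,K}$, this gives $S^K_\mathrm{S}(\cdot,\cdot)\lesssim \mu|\chi_h-\Pi\chi_h|_{2,K}|\xi_h-\Pi\xi_h|_{2,K}\le\mu|\chi_h|_{2,K}|\xi_h|_{2,K}$.

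\textbf{Interface term of $\bA_h$.} The interface term $a^{\Sigma}$ is exactly the continuous one, because $\nabla\xi_h\cdot\bn_\Sigma$ is known on the edges. It is therefore bounded as in \eqref{bound:a}, using Cauchy--Schwarz and the global trace inequality $\|\nabla\xi_h\|_{0,\Sigma}\lesssim\|\xi_h\|_{2,\Omega_\mathrm{S}}$, which is valid since $\Xi_h\subset\rH^2_{\Gamma_\mathrm{S}}(\Omega_\mathrm{S})$. Summing over $K$ with discrete Cauchy--Schwarz yields \eqref{bound:ah}.

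\textbf{Coercivity of $\bA_h$.} For \eqref{coer:ah}, drop the nonnegative term $a^\Sigma(\xi_h,\xi_h)$. Use Pythagoras in the $\nabla^2$ semi-inner product together with the lower bound in \eqref{eq:stabilisationS}:
\begin{align*}
a_h^{\nabla^2,K}(\xi_h,\xi_h)\gtrsim \mu|\Pi_2^{\nabla^2,K}\xi_h|_{2,K}^2+\mu|\xi_h-\Pi_2^{\nabla^2,K}\xi_h|_{2,K}^2=\mu|\xi_h|_{2,K}^2 .
\end{align*}
Summing over $K$ gives $a_h(\xi_h,\xi_h)\gtrsim\mu|\xi_h|^2_{2,\Omega_\mathrm{S}}$. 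The step I expect to be the main (though mild) obstacle is passing from the seminorm to the full norm. This requires the equivalence $|\cdot|_{2,\Omega_\mathrm{S}}\simeq\|\cdot\|_{2,\Omega_\mathrm{S}}$ on $\rH^2_{\Gamma_\mathrm{S}}(\Omega_\mathrm{S})$, which follows from Poincar\'e--Friedrichs applied twice: to $\xi$ and to $\nabla\xi$, both of which vanish on $\Gamma_\mathrm{S}$, assumed to have positive measure. This is the same fact underlying \eqref{coer:a}, so it can be invoked directly because $\Xi_h$ is conforming.

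\textbf{The operator $\bC_h$.} The bounds \eqref{bound:ch} and \eqref{coer:ch} follow identically. One uses the $\nabla$-orthogonality of $\Pi_1^{\nabla,K}$ from \eqref{H1_proj} and \eqref{eq:stabilisationD}, which give $c_h^K(\psi_h,\psi_h)\simeq\kappa|\psi_h|^2_{1,K}$. Then, since $\Psi_h\subset\rH^1_\star(\Omega_\mathrm{D})$, the Poincar\'e inequality for zero-mean functions upgrades the seminorm to $\|\cdot\|_{1,\Omega_\mathrm{D}}$, as in \eqref{coer:c}.

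\textbf{Load functionals.} For \eqref{bound:Fh}, apply Cauchy--Schwarz:
\begin{align*}
|\rF_h^K(\xi_h)|\le\|\fb\|_{0,K}|\Pi_2^{\nabla^2,K}\xi_h|_{1,K}.
\end{align*}
Then bound $|\Pi_2^{\nabla^2,K}\xi_h|_{1,K}\lesssim\|\xi_h\|_{2,K}$. This follows by writing $\Pi\xi_h=\xi_h-(\xi_h-\Pi\xi_h)$ and applying Lemma~\ref{lem:est_poly} with $r=1$, $s=2$, which gives $|\xi_h-\Pi\xi_h|_{1,K}\lesssim h_K|\xi_h|_{2,K}$. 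Alternatively, the boundary-mean conditions in \eqref{H2_proj} together with a Poincar\'e-type argument on $K$ give the same bound. Similarly, for \eqref{bound:Gh}, use $\|\Pi_1^{0,K}\psi_h\|_{0,K}\le\|\psi_h\|_{0,K}$, the $\rL^2$-stability of the orthogonal projection. Summing over elements with discrete Cauchy--Schwarz concludes the proof.
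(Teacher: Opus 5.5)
Your proposal is correct and follows the same route as the paper. The paper's proof only invokes ``standard arguments'' (as in Lemma~\ref{lem:properties_linear_ops}) together with the stabilisation bounds \eqref{eq:stabilisation} and omits the details, and you have supplied exactly those details: stability of $\Pi_2^{\nabla^2,K}$ and $\Pi_1^{\nabla,K}$, Cauchy--Schwarz for the stabilising semi-inner products, the trace inequality for $a^{\Sigma}$, and the Poincar\'e-type norm equivalences on $\rH^2_{\Gamma_\mathrm{S}}(\Omega_\mathrm{S})$ and $\rH^1_\star(\Omega_\mathrm{D})$, which carry over to $\Xi_h$ and $\Psi_h$ by conformity.
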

\begin{proof}
    Standard arguments (cf. Lemma~\ref{lem:properties_linear_ops}) together with the stabilisation bounds \eqref{eq:stabilisation} lead to the result, further details are omitted.
    \qed
\end{proof}

We now define the discrete global operator $\cA_h:(\Xi_h\times \Psi_h)\times (\Xi_h\times \Psi_h)\to \bbR$ %and $\cF:\rH^2_{\Gamma_\mathrm{S}}(\Omega_\mathrm{S})\times \rH^1_\star(\Omega_\mathrm{D}) \to \bbR$ 
as
\begin{align*}
    \cA_h((\chi_h,\varphi_h),(\xi_h,\psi_h))&:=a_h(\chi_h,\xi_h)+b(\xi_h,\varphi_h)+b(\chi_h,\psi_h)-c_h(\varphi_h,\psi_h),%\\
    %\cF((\xi,\psi))&:= F(\xi) + G(\psi),
\end{align*}
and the graph norm in the space $\Xi_h\times\Psi_h$ is defined by $\|(\xi_h,\psi_h)\|^2:=\|\xi_h\|^2_{2,\Omega_\mathrm{S}}+\|\psi_h\|^2_{1,\Omega_\mathrm{D}}$.
Similarly, the boundedness of $\cA_h$ %and $\cF$ are
is provided by Lemma~\ref{lem:properties_discrete_operators}. Now, the discrete global inf-sup condition for the operator $\cA_h$  is provided below as a consequence of \eqref{coer:ah} and \eqref{coer:ch} (see also Theorem~\ref{lem:global_inf-sup}).
\begin{lemma}[discrete global inf-sup]\label{discrete_global_inf-sup}
    The following bound holds:
    \begin{align*}
        \sup_{(\xi_h,\psi_h)\in(\Xi_h\times\Psi_h)\setminus \{(0,0)\}} \frac{\cA_h((\chi_h,\varphi_h),(\xi_h,\psi_h))}{\|(\xi_h,\psi_h)\|} \gtrsim \|(\chi_h,\varphi_h)\|, \quad \forall (\chi_h,\varphi_h)\in \Xi_h\times\Psi_h.
    \end{align*}
\end{lemma}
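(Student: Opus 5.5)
We test with the pair $(\xi_h,\psi_h)=(\chi_h,-\varphi_h)\in\Xi_h\times\Psi_h$, which mirrors the proof of Lemma~\ref{lem:global_inf-sup}. Fix $(\chi_h,\varphi_h)\in\Xi_h\times\Psi_h$ with $(\chi_h,\varphi_h)\neq(0,0)$; the case $(0,0)$ is trivial. By definition of $\cA_h$,
\begin{align*}
\cA_h((\chi_h,\varphi_h),(\chi_h,-\varphi_h)) = a_h(\chi_h,\chi_h) + b(\chi_h,\varphi_h) - b(\chi_h,\varphi_h) + c_h(\varphi_h,\varphi_h) = a_h(\chi_h,\chi_h)+c_h(\varphi_h,\varphi_h).
\end{align*}
The coupling terms cancel exactly. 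This works because $b(\bullet,\bullet)$ is evaluated without any virtual projection and enters symmetrically in both equations of \eqref{eq:WSP_h}, so no discrete compatibility or inf-sup condition on $\bB$ is needed.

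Next we invoke the discrete coercivity bounds \eqref{coer:ah} and \eqref{coer:ch} from Lemma~\ref{lem:properties_discrete_operators}. They give
\begin{align*}
a_h(\chi_h,\chi_h)+c_h(\varphi_h,\varphi_h)\gtrsim \mu\|\chi_h\|^2_{2,\Omega_\mathrm{S}}+\kappa\|\varphi_h\|^2_{1,\Omega_\mathrm{D}}\geq \min\{\mu,\kappa\}\,\|(\chi_h,\varphi_h)\|^2.
\end{align*}
Since $\|(\chi_h,-\varphi_h)\|=\|(\chi_h,\varphi_h)\|$, dividing by this norm bounds the supremum from below by $\min\{\mu,\kappa\}\|(\chi_h,\varphi_h)\|$. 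This is the claimed estimate, with the hidden constant depending on the physical parameters only through $\min\{\mu,\kappa\}$ and otherwise only on the mesh regularity constant $\eta$ and the stabilisation constants in \eqref{eq:stabilisation}.

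The only substantive point is the discrete coercivity itself, which we take from Lemma~\ref{lem:properties_discrete_operators}. If it had to be justified, the argument runs element by element. For \eqref{coer:ah}, we split $\xi_h=\Pi_2^{\nabla^2,K}\xi_h+(I-\Pi_2^{\nabla^2,K})\xi_h$. The two pieces are $a^{\nabla^2,K}$-orthogonal by the definition \eqref{H2_proj}, and the lower bound in \eqref{eq:stabilisationS} applies on $\ker(\Pi_2^{\nabla^2,K})$. Together these give $a_h^{\nabla^2,K}(\xi_h,\xi_h)\gtrsim \mu|\xi_h|^2_{2,K}$, and the interface term $a^{\Sigma}$ is nonnegative and can be dropped. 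The global seminorm $|\bullet|_{2,\Omega_\mathrm{S}}$ then controls the full norm $\|\bullet\|_{2,\Omega_\mathrm{S}}$ by the Poincar\'e--Friedrichs inequality on $\rH^2_{\Gamma_\mathrm{S}}(\Omega_\mathrm{S})$, using that $\Xi_h\subset \rH^2_{\Gamma_\mathrm{S}}(\Omega_\mathrm{S})$ is conforming.

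The same argument with \eqref{H1_proj} and \eqref{eq:stabilisationD} gives $c_h(\psi_h,\psi_h)\gtrsim\kappa|\psi_h|^2_{1,\Omega_\mathrm{D}}$. The zero-mean constraint built into $\Psi_h\subset\rH^1_\star(\Omega_\mathrm{D})$ then upgrades the seminorm to the full $\rH^1$ norm via Poincar\'e's inequality, exactly as for \eqref{coer:c}. This is the step that would need care, namely making sure the Poincar\'e constants are uniform in $h$. Conformity makes that immediate, so no real obstacle remains.
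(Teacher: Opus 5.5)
Your proposal is correct and follows the paper's route: the paper derives this lemma directly from the discrete coercivity bounds \eqref{coer:ah} and \eqref{coer:ch}, mirroring the continuous proof that tests with $(\chi,-\varphi)$ so the two $b$-terms cancel. Your extra sketch of the coercivity (orthogonality of $\Pi_2^{\nabla^2,K}$ and $\Pi_1^{\nabla,K}$, the stabilisation bounds, and Poincar\'e on the conforming spaces) supplies details the paper omits as standard, and your explicit $\min\{\mu,\kappa\}$ makes visible that the hidden constant depends on the physical parameters.
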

The well-posedness of the discrete perturbed saddle--point problem (cf. \eqref{eq:operator_form_h}) and the discrete continuous dependence on data is stated below. %as a consequence of the discrete global inf-sup above combined with a direct application of Brezzi theory.
\begin{theorem}[discrete well-posedness]\label{th:WPh}
    There exists a unique solution $(\chi_h,\varphi_h)\in\Xi_h\times\Psi_h$ to \eqref{eq:operator_form_h} (equivalently, \eqref{eq:WSP_h}) such that 
    $$\|(\chi_h,\varphi_h)\|\lesssim\|\fb\|_{0,\Omega_\mathrm{S}}+\|g\|_{0,\Omega_\mathrm{D}}.$$
\end{theorem}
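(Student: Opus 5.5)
The plan is to deduce the result from the discrete global inf-sup condition (Lemma~\ref{discrete_global_inf-sup}) together with the boundedness of $\cA_h$, $\rF_h$ and $\rG_h$ in Lemma~\ref{lem:properties_discrete_operators}. We apply the Babu\v{s}ka--Ne\v{c}as theorem on the finite-dimensional space $\Xi_h\times\Psi_h$, equipped with the graph norm $\|(\xi_h,\psi_h)\|$. Problem \eqref{eq:operator_form_h} is equivalent to finding $(\chi_h,\varphi_h)\in\Xi_h\times\Psi_h$ such that
\[
\cA_h((\chi_h,\varphi_h),(\xi_h,\psi_h)) = \rF_h(\xi_h)+\rG_h(\psi_h)\quad\forall(\xi_h,\psi_h)\in\Xi_h\times\Psi_h .
\]
This follows by adding \eqref{eq:WSP_h1} and \eqref{eq:WSP_h2}, and conversely by testing with $(\xi_h,0)$ and $(0,\psi_h)$.

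First, I will make the inf-sup explicit, since its proof was only sketched. Given $(\chi_h,\varphi_h)$, test with $(\xi_h,\psi_h)=(\chi_h,-\varphi_h)$. The coupling terms $b(\chi_h,\varphi_h)-b(\chi_h,\varphi_h)$ cancel exactly, leaving
\[
a_h(\chi_h,\chi_h)+c_h(\varphi_h,\varphi_h)\gtrsim \min\{\mu,\kappa\}\,\|(\chi_h,\varphi_h)\|^2 .
\]
This uses \eqref{coer:ah} and \eqref{coer:ch}. The latter relies on $\Psi_h\subset\rH^1_\star(\Omega_\mathrm{D})$ (the Poincar\'e inequality for zero-mean functions) and on the lower stabilisation bound \eqref{eq:stabilisationD}. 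Since $\|(\chi_h,-\varphi_h)\|=\|(\chi_h,\varphi_h)\|$, dividing gives the inf-sup with constant $\gamma\gtrsim\min\{\mu,\kappa\}$, independent of $h$.

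Second, existence and uniqueness. The form $\cA_h$ is symmetric, so the transposed inf-sup condition holds automatically. More simply, in finite dimensions the inf-sup condition implies that the square linear system \eqref{eq:operator_form_h} has trivial kernel: if $\cA_h((\chi_h,\varphi_h),\cdot)=0$, then $\gamma\|(\chi_h,\varphi_h)\|\le 0$. Injectivity of a square system gives surjectivity, hence unique solvability.

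Third, the stability estimate. Applying the inf-sup to the solution and using \eqref{bound:Fh}--\eqref{bound:Gh},
\[
\gamma\|(\chi_h,\varphi_h)\|\le\sup\frac{\rF_h(\xi_h)+\rG_h(\psi_h)}{\|(\xi_h,\psi_h)\|}\lesssim \|\fb\|_{0,\Omega_\mathrm{S}}+\|g\|_{0,\Omega_\mathrm{D}} .
\]
This yields the claim, with a hidden constant depending on $\mu,\kappa$ but not on $h$.

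The only delicate point is the $h$-uniformity of the coercivity constants in \eqref{coer:ah} and \eqref{coer:ch}. These come from the stabilisation equivalences \eqref{eq:stabilisation} and the mesh assumptions \ref{A1}--\ref{A2}. For \eqref{coer:ah} one also needs the non-negativity of $a^{\Sigma,K}$ and the fact that $|\cdot|_{2,\Omega_\mathrm{S}}$ is a norm on $\rH^2_{\Gamma_\mathrm{S}}(\Omega_\mathrm{S})$, because of the clamped conditions on $\Gamma_\mathrm{S}$. Since these are already stated in Lemma~\ref{lem:properties_discrete_operators}, no further obstacle arises.
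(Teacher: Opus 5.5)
Your proof is correct and follows the paper's own (only sketched) route. You obtain the discrete global inf-sup of Lemma~\ref{discrete_global_inf-sup} by testing with $(\chi_h,-\varphi_h)$, so that the $b$-terms cancel and \eqref{coer:ah}, \eqref{coer:ch} apply; you then replace the invoked Babu\v{s}ka--Brezzi theory with the explicit finite-dimensional injectivity argument and the bounds \eqref{bound:Fh}--\eqref{bound:Gh}. Your tracked constant $\gamma\gtrsim\min\{\mu,\kappa\}$ is a useful refinement: it shows the hidden constant in the theorem behaves like $1/\min\{\mu,\kappa\}$, so it cannot be independent of the physical parameters, as the paper's general convention for $\lesssim$ would suggest.
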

Note that the hidden constant in Theorem~\ref{th:WPh} may differ from that in  Theorem~\ref{th:WP}.

\section{A priori error analysis}\label{sec:apriori}
This section is devoted to the derivation of an a priori error estimate for the \ac{vem} discretisation discussed in Section~\ref{sec:vem}. We start by recalling the interpolation properties of the spaces $\Xi_h$ and $\Psi_h$ (see e.g. \cite{absv2016,cangiani2017posteriori,kmr25,MRR15,bbcmmr13}).
\begin{lemma}[interpolation estimates]
\label{lem:est_inte}
Let $\xi\in\rH^2_{\Gamma_\mathrm{S}}(\Omega_\mathrm{S})\cap\rH^{r_1}(\Omega_\mathrm{S})$ and $\psi\in\rH^1_\star(\Omega_\mathrm{D})\cap\rH^{r_2}(\Omega_\mathrm{D})$ with $2\leq r_1 \leq 3$ and $1\leq r_2 \leq 2$. Then, there exist interpolation operators $\rI_{\mathrm{S},h}:\rH^2_{\Gamma_\mathrm{S}}(\Omega_\mathrm{S})\cap\rH^{r_1}(\Omega_\mathrm{S})\rightarrow \Xi_h$ and $\rI_{\mathrm{D},h}:\rH^1_\star(\Omega_\mathrm{D})\cap\rH^{r_2}(\Omega_\mathrm{D})\rightarrow \Psi_h$, such that
\begin{alignat*}{2}
    |\xi-\rI_{\mathrm{S},h}^{K_{\mathrm{S}}}\xi|_{j_1,K_{\mathrm{S}}} &\lesssim %C_{\mathrm{I}_{2}}
    h^{r_1-j_1}_K |\xi|_{r_1,K_{\mathrm{S}}}, &&\quad 0\leq j_1 \leq 2, \quad \forall K_{\mathrm{S}}\in\cT_h(\Omega_\mathrm{S}), \\
    |\psi-\rI_{\mathrm{D},h}^{K_{\mathrm{D}}}\psi|_{j_2,K_{\mathrm{D}}} &\lesssim %C_{\mathrm{I}_{1}}
    h^{r_2-j_2}_K |\psi|_{r_2,K_{\mathrm{D}}}, &&\quad 0\leq j_2 \leq 1, \quad \forall K_{\mathrm{D}}\in\cT_h(\Omega_\mathrm{D}),
\end{alignat*}
where $\rI_{\square,h}^{K_{\square}}$ denotes the restriction of the global interpolant $\rI_{\square,h}$ to the local element $K_{\square}$ for both the solid and fluid subdomains, i.e., $\square \in\{\mathrm{S},\mathrm{D}\}$.
\end{lemma}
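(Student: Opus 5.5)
The two estimates are decoupled, since each concerns a different subdomain and a different space. So I would construct $\rI_{\mathrm{S},h}$ and $\rI_{\mathrm{D},h}$ separately. For each, I would follow the standard VEM route: define the interpolant through the degrees of freedom, then compare it locally with a polynomial approximation on each element. The stability of the interpolant in the VEM norm, together with Lemma~\ref{lem:est_poly}, should give the bounds.

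\textbf{Darcy part.} For $\psi\in\rH^{r_2}(\Omega_\mathrm{D})$ with $r_2>1$, the function $\psi$ is continuous, so I would first set $\tilde\psi_h\in\Psi_h(K)$ by matching the vertex values of $\psi$ (DoFs $(\mathbf{D1}_{\psi_h})$). I would then subtract the mean to land in $\rH^1_\star(\Omega_\mathrm{D})$. Since $\psi$ already has zero mean, this correction is bounded by $\|\psi-\tilde\psi_h\|_{0,\Omega_\mathrm{D}}$ and does not affect the seminorms. The local estimate is the classical one from \cite{bbcmmr13,MRR15}:
\[
|\psi-\tilde\psi_h|_{1,K}\le|\psi-\psi_\pi|_{1,K}+|\psi_\pi-\tilde\psi_h|_{1,K},
\]
where $\psi_\pi=\Pi_1^{\nabla,K}\psi$. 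Because $\psi_\pi-\tilde\psi_h$ is the interpolant of $\psi_\pi-\psi$, it is controlled by its vertex values. The DoF-norm equivalence under \ref{A1}--\ref{A2} and the Sobolev embedding $\rH^{r_2}\hookrightarrow C^0$ with scaling then bound $\|\psi-\psi_\pi\|_{L^\infty(K)}$ by $h_K^{r_2-1}|\psi|_{r_2,K}$, up to the $\rL^2$ and $\rH^1$ terms, which are handled by Lemma~\ref{lem:est_poly}. The $j_2=0$ case follows the same way, using a scaled Poincaré inequality.

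\textbf{Stokes part.} For $\xi\in\rH^{r_1}$ with $r_1>2$, I would define $\rI_{\mathrm{S},h}\xi$ through $(\mathbf{D1}_{\xi_h})$--$(\mathbf{D2}_{\xi_h})$, using the point values of $\xi$ and $\nabla\xi$. These are well defined because $\rH^{r_1}\hookrightarrow C^1$ in 2D. The boundary conditions on $\Gamma_\mathrm{S}$ are inherited because $\xi=0$ and $\nabla\xi=\bzero$ there: the latter holds since both the tangential and normal derivatives vanish. The error would again be split with $\xi_\pi=\Pi_2^{\nabla^2,K}\xi$, and $\xi_\pi-\rI_{\mathrm{S},h}\xi$ bounded through scaled DoF stability in the $\rH^2(K)$ seminorm, as in \cite{absv2016}. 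The case $r_1=2$, with no $C^1$ embedding, and the borderline $r_2=1$ require a quasi-interpolant instead. I would use a Clément/Scott--Zhang-type construction that replaces point values by local averages, or alternatively the $\rH^2$-projection-based interpolant of \cite{cangiani2017posteriori,kmr25}.

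\textbf{Main obstacle.} The delicate step is the DoF-stability estimate $|v_h|_{2,K}\lesssim h_K^{-1}\|\mathrm{dof}(v_h)\|_{\ell^2}$ for the $C^1$ space, which has to cover the enhancement constraint and the biharmonic interior. I would not re-derive it. Instead I would cite the result of \cite{absv2016} for this space, where it is proved under \ref{A1}--\ref{A2}.
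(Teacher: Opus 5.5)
Your nodal-interpolation argument for $r_1\in(2,3]$ and $r_2\in(1,2]$ is the standard one. It gives the Stokes bounds, and the Darcy seminorm bound ($j_2=1$), with element-local right-hand sides. The paper itself gives no proof and only recalls the lemma from the cited VEM literature. Two of your steps, however, cannot work as stated.

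\textbf{Borderline cases $r_1=2$, $r_2=1$.} Clément or Scott--Zhang-type averaging, and the quasi-interpolants of the a~posteriori VEM literature, only give bounds with a patch $\omega_K$ on the right, e.g.\ $\|\psi-\rI\psi\|_{0,K}\lesssim h_K|\psi|_{1,\omega_K}$. No construction can remove the patch.

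Suppose a linear $\rI$ satisfied $\|\psi-\rI\psi\|_{0,K}\lesssim h_K|\psi|_{1,K}$ on every $K$. Then $\rI\psi|_K=\psi|_K$ whenever $\psi$ is constant on $K$. Take a vertex $V$ shared by $K_1,K_2,K_3$ and the bounded functional $\ell(\psi):=(\rI(\psi-\bar\psi))(V)+\bar\psi$, where $\bar\psi$ is the mean (this handles the zero-mean constraint). Then $\ell$ vanishes on every $\psi\in\rH^1(\Omega_\mathrm{D})$ that vanishes on some $K_i$. Since $K_1\cap K_2\cap K_3=\{V\}$, there is a smooth partition of unity on $\overline{\Omega}_\mathrm{D}\setminus\{V\}$ with $\phi_i=0$ on $K_i$. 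Hence $\ell$ vanishes on all functions vanishing near $V$. These are dense in $\rH^1$ in two dimensions (logarithmic cut-offs), so $\ell=0$, contradicting $\ell(1)=1$.

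The same argument applied to $\xi\mapsto\nabla(\rI\xi)(V)$ rules out $r_1=2$. There, functions vanishing near $V$ are dense in $\{\xi:\xi(V)=0\}$, so the functional is a multiple of $\xi(V)$. Yet it must equal $(1,0)$ on a $\xi$ equal to $x_1-V_1$ on the vertex patch. So these cases must either carry $|\cdot|_{r,\omega_K}$ or be dropped; only $\delta>0$ is used in Theorem~\ref{th:convergence_rates}.

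\textbf{Mean-value correction.} Subtracting $c_h:=|\Omega_\mathrm{D}|^{-1}\int_{\Omega_\mathrm{D}}(\tilde\psi_h-\psi)$ leaves $j_2=1$ untouched. It does, however, add $|c_h|\,|K|^{1/2}$ to $\|\psi-\rI_{\mathrm{D},h}\psi\|_{0,K}$. Moreover, $c_h$ is controlled only by the global quantity $h^{r_2}|\psi|_{r_2,\Omega_\mathrm{D}}$, not by $|\psi|_{r_2,K}$. So your claim that ``$j_2=0$ follows the same way'' fails for the operator you actually build.

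This cannot be repaired within the stated form. Take an interior element $K_0$ and $0\le\phi\in C_c^\infty(K_0)$, $\phi\not\equiv0$, and set $\psi=\phi-|\Omega_\mathrm{D}|^{-1}\int_{\Omega_\mathrm{D}}\phi$. This $\psi$ is the same nonzero constant on every $K\neq K_0$. The local $\rL^2$ bounds then force $\rI_{\mathrm{D},h}\psi$ to equal that constant on each such $K$, hence at all vertices of $K_0$, hence on $K_0$. Thus $\rI_{\mathrm{D},h}\psi\notin\rH^1_\star(\Omega_\mathrm{D})$. The $j_2=0$ estimate must therefore be stated for the uncorrected interpolant $\tilde\psi_h$, or with an additional global term.
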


We now define the total error as
\begin{align*}
\text{e}_h&:=\|(\chi-\chi_h,\varphi-\varphi_h)\|=\left(\text{e}_{\chi_h}^2+\text{e}_{\varphi_h}^2\right)^{\frac{1}{2}},
\end{align*}
where $\text{e}_{\chi_h}:=\|\chi-\chi_h\|_{2,\Omega_{\mathrm{S}}}$ and $\text{e}_{\varphi_h}:=\|\varphi-\varphi_h\|_{1,\Omega_{\mathrm{D}}}$. The following result provides a bound of $\text{e}_h$ in terms of the data approximation, polynomial approximation, and interpolation errors.
\begin{lemma}[energy-error estimate]\label{lem:errorForApriori}
    Let $(\chi,\varphi)\in\rH^2_{\Gamma_\mathrm{S}}(\Omega_\mathrm{S})\times \rH^1_\star(\Omega_\mathrm{D})$ and $(\chi_h,\varphi_h)\in\Xi_h\times\Psi_h$ be the unique solutions to \eqref{eq:operator_form} and \eqref{eq:operator_form_h}, respectively. Then, the following estimate holds:
    \begin{align*}
    \text{e}_h &\lesssim \max\left\{ 1,\mu,\kappa, \frac{\mu\alpha}{\sqrt{\kappa}} \right\} \bigg( \sum_{K\in \cT_h(\Omega_\mathrm{S})} \bigg[ |\chi-\rI_{\mathrm{S},h}^K\chi|_{2,K} + |\chi-\Pi_2^{\nabla^2,K}\chi|_{2,K} + \|F^K - F_h^K\|_{(\Xi_h(K))'} \bigg] \\
    &\quad + \sum_{K\in \cT_h(\Omega_\mathrm{D})} \bigg[ |\varphi-\rI_{\mathrm{D},h}^K\varphi|_{1,K} + |\varphi-\Pi_1^{\nabla,K}\varphi|_{1,K} + \|G^K - G_h^K\|_{(\Psi_h(K))'} \bigg] \bigg).
    \end{align*}
\end{lemma}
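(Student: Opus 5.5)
We follow the standard Strang-type route for a perturbed saddle-point problem: split the error with the interpolants, apply the discrete global inf-sup condition to the discrete part, and bound the resulting consistency terms elementwise.

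\textbf{Step 1 (error splitting).} Set $\chi_I:=\rI_{\mathrm{S},h}\chi\in\Xi_h$ and $\varphi_I:=\rI_{\mathrm{D},h}\varphi\in\Psi_h$. We may assume $\rI_{\mathrm{D},h}$ preserves the zero mean; otherwise we subtract a constant, which does not affect the $|\cdot|_{1}$-seminorm bounds. Then
$$\chi-\chi_h=(\chi-\chi_I)+(\chi_I-\chi_h),\qquad \varphi-\varphi_h=(\varphi-\varphi_I)+(\varphi_I-\varphi_h).$$
The first pieces are bounded by the interpolation terms. For $\|\chi-\chi_I\|_{2,\Omega_\mathrm{S}}$ and $\|\varphi-\varphi_I\|_{1,\Omega_\mathrm{D}}$ we pass from full norms to seminorms by Poincaré/Friedrichs inequalities in $\rH^2_{\Gamma_\mathrm{S}}(\Omega_\mathrm{S})$ and $\rH^1_\star(\Omega_\mathrm{D})$. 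It therefore remains to bound $(\delta_\chi,\delta_\varphi):=(\chi_I-\chi_h,\varphi_I-\varphi_h)$.

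\textbf{Step 2 (discrete inf-sup and error equation).} By Lemma~\ref{discrete_global_inf-sup} there is $(\xi_h,\psi_h)$ with $\|(\xi_h,\psi_h)\|=1$ and
$\|(\delta_\chi,\delta_\varphi)\|\lesssim \cA_h((\delta_\chi,\delta_\varphi),(\xi_h,\psi_h))$; concretely, it suffices to take $(\xi_h,\psi_h)=(\delta_\chi,-\delta_\varphi)$ suitably normalised. We then use the discrete system \eqref{eq:WSP_h} for $\chi_h,\varphi_h$ and the continuous system \eqref{eq:WSP} tested with $(\xi_h,\psi_h)$, which is legitimate because $\Xi_h\subset\rH^2_{\Gamma_\mathrm{S}}(\Omega_\mathrm{S})$ and $\Psi_h\subset\rH^1_\star(\Omega_\mathrm{D})$ are conforming. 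This gives
\begin{align*}
\cA_h((\delta_\chi,\delta_\varphi),(\xi_h,\psi_h)) &= [a_h(\chi_I,\xi_h)-a(\chi,\xi_h)] + b(\xi_h,\varphi_I-\varphi)+b(\chi_I-\chi,\psi_h)\\
&\quad -[c_h(\varphi_I,\psi_h)-c(\varphi,\psi_h)] + [\rF(\xi_h)-\rF_h(\xi_h)]+[\rG(\psi_h)-\rG_h(\psi_h)].
\end{align*}
The exact form $b$ is used in the discrete problem, so the two $b$-terms carry no consistency error. Each is bounded via \eqref{bound:b} by $\|\varphi-\varphi_I\|_{1,\Omega_\mathrm{D}}$ and $\|\chi-\chi_I\|_{2,\Omega_\mathrm{S}}$, respectively. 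The load terms are by definition bounded by the local dual norms $\|F^K-F_h^K\|_{(\Xi_h(K))'}$ and $\|G^K-G_h^K\|_{(\Psi_h(K))'}$, summed over elements.

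\textbf{Step 3 (consistency of $a_h$ and $c_h$).} The interface part $a^{\Sigma}$ is computed exactly in $a_h$, so $a^\Sigma(\chi_I-\chi,\xi_h)$ is controlled by the trace inequality with the factor $\alpha\mu/\sqrt{\kappa}$ and $\|\chi-\chi_I\|_{2,\Omega_\mathrm{S}}$. For the bulk part we insert $\chi_\pi:=\Pi_2^{\nabla^2,K}\chi$ elementwise. Polynomial consistency, $a_h^{\nabla^2,K}(\chi_\pi,\xi_h)=a^{\nabla^2,K}(\chi_\pi,\xi_h)$, which follows from the definition of the projector and the fact that the stabilisation vanishes on $\rP_2$, gives
$$a_h^{\nabla^2,K}(\chi_I,\xi_h)-a^{\nabla^2,K}(\chi,\xi_h)=a_h^{\nabla^2,K}(\chi_I-\chi_\pi,\xi_h)+a^{\nabla^2,K}(\chi_\pi-\chi,\xi_h).$$
Both terms are bounded using the local continuity from \eqref{eq:stabilisationS} by $\mu(|\chi-\chi_I|_{2,K}+|\chi-\chi_\pi|_{2,K})|\xi_h|_{2,K}$, with the triangle inequality applied to $\chi_I-\chi_\pi$. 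The same argument with $\Pi_1^{\nabla,K}$ and \eqref{eq:stabilisationD} handles $c_h$, producing the factor $\kappa$. Summing, using Cauchy--Schwarz together with $\|(\xi_h,\psi_h)\|=1$, and bounding $\ell^2$ sums by $\ell^1$ sums, we collect all constants into $\max\{1,\mu,\kappa,\mu\alpha/\sqrt{\kappa}\}$.

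\textbf{Main obstacle.} The delicate points are bookkeeping rather than conceptual. First, the inf-sup constant of Lemma~\ref{discrete_global_inf-sup} depends on $\min\{\mu,\kappa\}$, which is implicitly absorbed into the hidden constant. Second, the local continuity of $a_h^K$ must be used on non-kernel functions; we handle this by splitting $v=\Pi v+(v-\Pi v)$ and using the stability of the projector in the $|\cdot|_{2,K}$-seminorm.
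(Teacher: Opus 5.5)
Your argument is correct and follows the same Strang-type skeleton as the paper: interpolant splitting, discrete stability of the error system, and polynomial consistency. The paper invokes the data-dependence bound of Theorem~\ref{th:WPh} for the stability step, which is equivalent to your use of Lemma~\ref{discrete_global_inf-sup} with $(\xi_h,\psi_h)=(\delta_\chi,-\delta_\varphi)$. You depart from the paper in two places.

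\textbf{Consistency step.} You insert $\Pi_2^{\nabla^2,K}\chi$ and $\Pi_1^{\nabla,K}\varphi$. The paper inserts $\Pi_2^{\nabla^2,K}\chi_h$ and $\Pi_1^{\nabla,K}\varphi_h$, and so carries terms such as $|\chi-\Pi_2^{\nabla^2,K}\chi_h|_{2,K}$ that still contain the discrete solution. Your choice is the one that produces the quantities actually appearing in the statement.

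\textbf{Interface terms.} This is the more substantive difference. You treat $a^{\Sigma}(\chi-\rI_{\mathrm{S},h}\chi,\xi_h)$, $b(\xi_h,\varphi-\rI_{\mathrm{D},h}\varphi)$ and $b(\chi-\rI_{\mathrm{S},h}\chi,\psi_h)$ globally. You use the trace bounds underlying \eqref{bound:a}--\eqref{bound:b} and then Friedrichs/Poincar\'e, which lands directly on $|\chi-\rI_{\mathrm{S},h}\chi|_{2,\Omega_\mathrm{S}}$ and $|\varphi-\rI_{\mathrm{D},h}\varphi|_{1,\Omega_\mathrm{D}}$.

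The paper instead works edge by edge on $\cE_h(\Sigma)$. It applies discrete trace/inverse inequalities for virtual functions to the test functions, namely $\|\nabla\xi_h\cdot\bn\|_{0,e}\lesssim h_K^{-1/2}|\xi_h|_{1,K}$ and $\|\psi_h\|_{0,e}\lesssim h_K^{-1/2}\|\psi_h\|_{0,K}$. It then applies a local trace inequality to the interpolation errors. This generates extra terms $h_K^{-1}|\chi-\rI_{\mathrm{S},h}^K\chi|_{1,K}$ and $h_K^{-1}\|\varphi-\rI_{\mathrm{D},h}^K\varphi\|_{0,K}$. These are not in the stated right-hand side and are only of the same order once Lemma~\ref{lem:est_inte} is invoked.

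\textbf{Trade-off.} The edgewise route keeps the interface contributions localised to elements adjacent to $\Sigma$. However, it depends on inverse estimates in the virtual spaces and produces intermediate terms that must still be reconciled with the statement. Your global route is shorter, uses only continuous trace bounds, and proves the lemma exactly as stated. Your caveat that the factor $1/\min\{\mu,\kappa\}$ from discrete stability is hidden in $\lesssim$ applies equally to the paper's proof.
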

\begin{proof}
    From the unique solvability of both continuous weak formulation \eqref{eq:WSP} and the discrete weak formulation \eqref{eq:WSP_h} we readily see that $(\chi_h-\rI_{\mathrm{S},h}\chi,\varphi_h-\rI_{\mathrm{D},h}\varphi)\in\Xi_h\times \Psi_h$ is the unique solution to
    \begin{alignat*}{2}
        a_h(\chi_h-\rI_{\mathrm{S},h}\chi,\xi_h) + b(\xi_h,\varphi_h-\rI_{\mathrm{D},h}\varphi) &= \check{\rF}_h(\xi_h), &&\,\, \forall \xi_h\in \Xi_h, \\
        b(\chi_h-\rI_{\mathrm{S},h}\chi,\psi_h) - c_h(\varphi_h-\rI_{\mathrm{D},h}\varphi,\psi_h) &= \check{\rG}_h(\psi_h),  && \,\, \forall \psi_h\in \Psi_h,
    \end{alignat*}
    where the right-hand sides are given by
    \begin{subequations}\label{auxiliar_functionals1}
        \begin{align}
            \check{\rF}_h(\xi_h) &:= \left[ a(\chi,\xi_h) - a_h(\rI_{\mathrm{S},h}\chi,\xi_h) \right] + b(\xi_h,\varphi-\rI_{\mathrm{D},h}\varphi) + \left[ F_h - F \right](\xi_h),\label{auxiliar_functional1}\\
            \check{\rG}_h(\psi_h) &:= \left[ c_h(\rI_{\mathrm{D},h}\varphi,\psi_h) - c(\varphi,\psi_h) \right] + b(\chi-\rI_{\mathrm{S},h}\chi,\psi_h) + \left[ G_h - G \right](\psi_h).\label{auxiliar_functional2}
        \end{align}
    \end{subequations}
    The continuous dependence on data for the discrete problem (cf. Theorem~\ref{th:WPh}) implies that
    $$\|(\chi_h-\rI_{\mathrm{S},h}\chi,\varphi_h-\rI_{\mathrm{D},h}\varphi)\|\lesssim \|\check{\rF}_h\|_{\Xi_h'} + \|\check{\rG}_h\|_{\Psi_h'}.$$
    
    We now focus in bounding the functionals appearing in \eqref{auxiliar_functionals1}. First, we recall the polynomial consistency of the discrete bilinear forms $a^{\nabla^2}_h(\bullet,\bullet)$ and $c_h(\bullet,\bullet)$, i.e.,
    \begin{alignat*}{2}
        a^{\nabla^2,K}_h(\Pi_2^{\nabla^2,K}\chi_h,\xi_h) &= a^{\nabla^2,K}(\Pi_2^{\nabla^2,K}\chi_h,\xi_h), &&\quad \forall \xi_h\in\Xi_h, \\
        c_h^K(\Pi^{\nabla,K}_1\varphi_h,\psi_h) &= c^K(\Pi^{\nabla,K}_1\varphi_h,\psi_h), &&\quad \forall \psi_h\in\Psi_h.
    \end{alignat*}
    Thus, \eqref{auxiliar_functionals1} turns into
    \begin{align*}
        \check{\rF}_h(\xi_h) &= \sum_{K\in\cT_h(\Omega_{\mathrm{S}})}\left[ a^{\nabla^2,K}(\chi-\Pi_2^{\nabla^2,K}\chi_h,\xi_h) - a^{\nabla^2,K}_h(\rI_{\mathrm{S},h}^K\chi-\Pi_2^{\nabla^2,K}\chi_h,\xi_h) \right] \notag \\ 
        &\quad + \sum_{K\in\cT_h(\Omega_{\mathrm{S}})}a^{\Sigma,K}(\chi-\rI_{\mathrm{S},h}^K\chi,\xi_h) + \sum_{K\in\cT_h} b^K(\xi_h,\varphi-\rI_{\mathrm{D},h}^K\varphi) + \sum_{K\in\cT_h(\Omega_{\mathrm{S}})}\left[ F_h^K - F^K \right](\xi_h),%\label{auxiliar_functional3}
        \\
        \check{\rG}_h(\psi_h) &= \sum_{K\in\cT_h(\Omega_{\mathrm{D}})}\left[ c_h^K(\rI_{\mathrm{D},h}^K\varphi-\Pi^{\nabla,K}_1\varphi_h,\psi_h) - c^K(\varphi-\Pi^{\nabla,K}_1\varphi_h,\psi_h) \right] \notag \\
        &\quad + \sum_{K\in\cT_h} b^K(\chi-\rI_{\mathrm{S},h}^K\chi,\psi_h) + \sum_{K\in\cT_h(\Omega_{\mathrm{D}})}\left[ G_h^K - G^K \right](\psi_h).%\label{auxiliar_functional4}
    \end{align*}
    The triangle inequality, the Cauchy--Schwarz inequality, and the stabilisation properties in \eqref{eq:stabilisation} yield
    \begin{align*}
        \left|\check{\rF}_h(\xi_h)\right| &\lesssim \max\left\{1,\mu,\frac{\mu\alpha}{\sqrt{\kappa}}\right\} \bigg( \sum_{K\in\cT_h(\Omega_{\mathrm{S}})}\left[ \left(|\chi-\Pi_2^{\nabla^2,K}\chi_h|_{2,K} 
        + |\rI_{\mathrm{S},h}^K\chi-\Pi_2^{\nabla^2,K}\chi_h|_{2,K} \right) \|\xi_h\|_{2,K} \right] \notag \\
        &\quad + \sum_{e\in\cE_h(\Sigma)} \|\nabla(\chi-\rI_{\mathrm{S},h}^K\chi)\cdot \bn_\Sigma\|_{0,e} \|\nabla\xi_h\cdot \bn_\Sigma\|_{0,e} + \sum_{e\in\cE_h(\Sigma)} \|\nabla\xi_h\cdot\bn_\Sigma\|_{0,e} \|\varphi-\rI_{\mathrm{D},h}\varphi\|_{0,e} \notag \\
        &\quad + \sum_{K\in\cT_h(\Omega_{\mathrm{S}})}\left[ F_h^K - F^K \right](\xi_h)\bigg),\\
        \left|\check{\rG}_h(\psi_h)\right| &\lesssim \max\left\{1,\kappa\right\} \bigg(\sum_{K\in\cT_h(\Omega_{\mathrm{D}})}\left[ \left(|\rI_{\mathrm{D},h}^K\varphi-\Pi^{\nabla,K}_1\varphi_h|_{1,K} + |\varphi-\Pi^{\nabla,K}_1\varphi_h|_{1,K}\right) \|\psi_h\|_{1,K} \right] \notag \\
        &\quad + \sum_{e\in\cE_h(\Sigma)} \|\nabla(\chi-\rI_{\mathrm{S},h}^K\chi)\cdot \bn_\Sigma\|_{0,e} \|\psi_h\|_{0,e}  + \sum_{K\in\cT_h(\Omega_{\mathrm{D}})}\left[ G_h^K - G^K \right](\psi_h) \bigg).
    \end{align*}
    Next, we apply the discrete trace inequality and the inverse estimates in \cite[Lemma 3.5]{HUANG2021} and \cite[Theorem 3.6]{Chen2018} to obtain
    \begin{align*}
        \|\nabla \xi_h \cdot \bn\|_{0,e} \lesssim h_K^{-\frac{1}{2}} |\xi_h|_{1,K} \quad \text{and} \quad \|\psi_h\|_{0,e} \lesssim h_K^{-\frac{1}{2}} \|\psi_h\|_{0,K}. 
    \end{align*}
    This, the triangle inequality and the inequality $\sum_i a_ib_i \leq (\sum_i a_i) (\sum_i b_i)$ (for any finite nonnegative sequences $(a_i)_i$ and $(b_i)_i$) lead to
    \begin{align*}
        \left|\check{\rF}_h(\xi_h)\right| &\lesssim \max\left\{1,\mu,\frac{\mu\alpha}{\sqrt{\kappa}}\right\} \bigg( \Big(  \sum_{K\in\cT_h(\Omega_{\mathrm{S}})}\left[ |\chi-\rI_{\mathrm{S},h}^K\chi|_{2,K} + |\chi-\Pi_2^{\nabla^2,K}\chi_h|_{2,K}\right] 
        \Big) \|\xi_h\|_{2,\Omega_{\mathrm{S}}}  \notag \\
        &\quad + \Big(\sum_{e\in\cE_h(\Sigma)} h_K^{-\frac{1}{2}}\|\nabla(\chi-\rI_{\mathrm{S},h}^K\chi)\cdot \bn_\Sigma\|_{0,e}\Big) \|\xi_h\|_{2,\Omega_{\mathrm{S}}}  \notag \\
        &\quad + \Big(\sum_{e\in\cE_h(\Sigma)} h_K^{-\frac{1}{2}}\|\varphi-\rI_{\mathrm{D},h}^K\varphi\|_{0,e}\Big) \|\xi_h\|_{2,\Omega_{\mathrm{S}}} + \sum_{K\in\cT_h(\Omega_{\mathrm{S}})}\left[ F_h^K - F^K \right](\xi_h)\bigg),\\
        \left|\check{\rG}_h(\psi_h)\right| &\lesssim \max\left\{1,\kappa\right\} \bigg( \Big( \sum_{K\in\cT_h(\Omega_{\mathrm{D}})}\left[ |\varphi-\rI_{\mathrm{D},h}^K\varphi|_{1,K} + |\varphi-\Pi^{\nabla,K}_1\varphi_h|_{1,K}\right]\Big) \|\psi_h\|_{1,\Omega_{\mathrm{D}}} \notag \\
        &\quad + \Big(\sum_{e\in\cE_h(\Sigma)} h_K^{-\frac{1}{2}}\|\nabla(\chi-\rI_{\mathrm{S},h}^K\chi)\cdot \bn_\Sigma\|_{0,e}\Big) \|\psi_h\|_{1,\Omega_{\mathrm{D}}}  + \sum_{K\in\cT_h(\Omega_{\mathrm{D}})}\left[ G_h^K - G^K \right](\psi_h) \bigg).
    \end{align*}
    We now apply the supremum over all $\xi_h\in\Xi_h$ (resp. $\psi_h\in \Psi_h$) to arrive at
    \begin{align*}
        \|\check{\rF}_h(\xi_h)\|_{\Xi_h'} &\lesssim \max\left\{1,\mu,\frac{\mu\alpha}{\sqrt{\kappa}}\right\} \bigg(\sum_{K\in\cT_h(\Omega_{\mathrm{S}})} \left[ |\chi-\rI_{\mathrm{S},h}^K\chi|_{2,K} + |\chi-\Pi_2^{\nabla^2,K}\chi_h|_{2,K} \right]
          \notag \\
        &\quad + \sum_{e\in\cE_h(\Sigma)} h_K^{-\frac{1}{2}}\|\nabla(\chi-\rI_{\mathrm{S},h}^K\chi)\cdot \bn_\Sigma\|_{0,e} + \sum_{e\in\cE_h(\Sigma)} h_K^{-\frac{1}{2}}\|\varphi-\rI_{\mathrm{D},h}^K\varphi\|_{0,e} \notag \\
        &\quad + \sum_{K\in\cT_h(\Omega_{\mathrm{S}})} \|F_h^K - F^K \|_{(\Xi_h(K))'}\bigg),\\
        \|\check{\rG}_h(\psi_h)\|_{\Psi_h'} &\lesssim \max\left\{1,\kappa\right\} \bigg(\sum_{K\in\cT_h(\Omega_{\mathrm{D}})}\left[|\varphi-\rI_{\mathrm{D},h}^K\varphi|_{1,K} + |\varphi-\Pi^{\nabla,K}_1\varphi_h|_{1,K}\right] \notag \\
        &\quad + \sum_{e\in\cE_h(\Sigma)} h_K^{-\frac{1}{2}}\|\nabla(\chi-\rI_{\mathrm{S},h}^K\chi)\cdot \bn_\Sigma\|_{0,e}  + \sum_{K\in\cT_h(\Omega_{\mathrm{D}})} \|G_h^K - G^K \|_{(\Psi_h(K))'} \bigg).
    \end{align*}
    Hence, the local trace inequality in \cite[Lemma 6.4]{blr2017} leads to
    \begin{align*}
        \|\check{\rF}_h(\xi_h)\|_{\Xi_h'} &\lesssim \max\left\{1,\mu,\frac{\mu\alpha}{\sqrt{\kappa}}\right\} \bigg(\sum_{K\in\cT_h(\Omega_{\mathrm{S}})}\left[|\chi-\rI_{\mathrm{S},h}^K\chi|_{2,K} + |\chi-\Pi_2^{\nabla^2,K}\chi_h|_{2,K} \right]
          \notag \\
        &\quad +  \sum_{K\in\cT_h(\Omega_{\mathrm{S}})} \left[h_K^{-1}|\chi-\rI_{\mathrm{S},h}^K\chi|_{1,K} +|\chi-\rI_{\mathrm{S},h}^K\chi|_{2,K}
        \right] \\
        &\quad + \sum_{K\in\cT_h(\Omega_{\mathrm{D}})} \left[ h_K^{-1}\|\varphi-\rI_{\mathrm{D},h}^K\varphi\|_{0,K} + |\varphi-\rI_{\mathrm{D},h}^K\varphi|_{1,K} \right]\notag \\
        &\quad + \sum_{K\in\cT_h(\Omega_{\mathrm{S}})} \|F_h^K - F^K \|_{(\Xi_h(K))'}\bigg),\\
        \|\check{\rG}_h(\psi_h)\|_{\Psi_h'} &\lesssim \max\left\{1,\kappa\right\} \bigg(\sum_{K\in\cT_h(\Omega_{\mathrm{D}})} \left[ |\varphi-\rI_{\mathrm{D},h}^K\varphi|_{1,K} + |\varphi-\Pi^{\nabla,K}_1\varphi_h|_{1,K} \right]\notag \\
        &\quad + \sum_{K\in\cT_h(\Omega_{\mathrm{S}})} \left[ h_K^{-1}|\chi-\rI_{\mathrm{S},h}^K\chi|_{1,K} + |\chi-\rI_{\mathrm{S},h}^K\chi|_{2,K}\right] + \sum_{K\in\cT_h(\Omega_{\mathrm{D}})} \|G_h^K - G^K \|_{(\Psi_h(K))'} \bigg).
    \end{align*}
    The estimate follows by putting together the inequality above with the Poincar\'e inequality  and the triangle inequality
    \begin{align*}
        e_h\leq \|(\chi_h-\rI_{\mathrm{S},h}\chi,\varphi_h-\rI_{\mathrm{D},h}\varphi)\| + \|\chi-\rI_{\mathrm{S},h}\chi,\varphi-\rI_{\mathrm{D},h}\varphi\|.
    \end{align*}
    \qed
\end{proof}

We finalise this section by providing the convergence rates for the proposed VEM scheme (cf. Section~\ref{sec:vem}).
\begin{theorem}[convergence rates]\label{th:convergence_rates}
    Under the assumptions of Lemma~\ref{lem:errorForApriori}, assume further that for $\delta\in(0,1]$, $\chi\in\rH^{2+\delta}(\Omega_{\mathrm{S}})\cap\rH^2_{\Gamma_\mathrm{S}}(\Omega_{\mathrm{S}})$ and $\varphi\in\rH^{1+\delta}(\Omega_{\mathrm{D}})\cap\rH^1_\star(\Omega_{\mathrm{D}})$. Then, the following estimate holds:
    $$e_h\lesssim \max\left\{ 1,\mu,\kappa, \frac{\mu\alpha}{\sqrt{\kappa}} \right\} h^{\delta} \left(|\chi|_{2+\delta,\Omega_{\mathrm{S}}} + |\varphi|_{1+\delta,\Omega_{\mathrm{D}}} + \|\fb\|_{0,\Omega_{\mathrm{S}}}+\|g\|_{0,\Omega_{\mathrm{D}}}\right).$$
\end{theorem}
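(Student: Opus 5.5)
The plan is to start from the energy-error estimate of Lemma~\ref{lem:errorForApriori} and bound each group of terms on its right-hand side by $h^{\delta}$ times the appropriate seminorm or data norm. The parameter-dependent factor $\max\{1,\mu,\kappa,\mu\alpha/\sqrt{\kappa}\}$ is carried through unchanged. Throughout, the local sums over elements should be read, as in the proof of that lemma, as combinations that are finally aggregated in an $\ell^2$ fashion. This is justified by the Cauchy--Schwarz inequality and the relation $\|\bullet\|^2_{s,\Omega_\square}=\sum_K\|\bullet\|^2_{s,K}$ recalled before Lemma~\ref{lem:properties_discrete_operators}. Consequently, local bounds of the form $h_K^{\delta}|\cdot|_{2+\delta,K}$ produce the global quantity $h^{\delta}|\chi|_{2+\delta,\Omega_{\mathrm{S}}}$, and similarly on the Darcy side.

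\textbf{Interpolation and projection terms.} For the interpolation terms, Lemma~\ref{lem:est_inte} with $r_1=2+\delta$ and $j_1=2$ gives $|\chi-\rI_{\mathrm{S},h}^K\chi|_{2,K}\lesssim h_K^{\delta}|\chi|_{2+\delta,K}$. With $r_2=1+\delta$ and $j_2=1$ it gives $|\varphi-\rI_{\mathrm{D},h}^K\varphi|_{1,K}\lesssim h_K^{\delta}|\varphi|_{1+\delta,K}$. For the projection terms, Lemma~\ref{lem:est_poly} with $s=2+\delta$, $r=2$, $k=1$ and $\phi_\pi=\Pi_2^{\nabla^2,K}\chi$ gives $|\chi-\Pi_2^{\nabla^2,K}\chi|_{2,K}\lesssim h_K^{\delta}|\chi|_{2+\delta,K}$. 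The choice $s=1+\delta$, $k=0$ and $\phi_\pi=\Pi_1^{\nabla,K}\varphi$ handles the Darcy term. Fractional Sobolev indices are allowed here, since $0\le s\le k+1$ in both lemmas.

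\textbf{Load consistency terms, and the main obstacle.} The load terms are where the real work lies. For $F$, for each $\xi_h\in\Xi_h(K)$ we write
\[
(F^K-F_h^K)(\xi_h)=\int_K \fb\cdot\bcurl(\xi_h-\Pi_2^{\nabla^2,K}\xi_h).
\]
By Cauchy--Schwarz and Lemma~\ref{lem:est_poly} applied to $\xi_h$ with $s=2$, $r=1$, this is bounded by $\|\fb\|_{0,K}\,h_K|\xi_h|_{2,K}$. Hence $\|F^K-F_h^K\|_{(\Xi_h(K))'}\lesssim h_K\|\fb\|_{0,K}\le h^{\delta}\|\fb\|_{0,K}$, using $h\le \mathrm{diam}(\Omega)$ and $\delta\le1$ (constants depending on the domain are absorbed). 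For $G$, the $\rL^2$-orthogonality of $\Pi_1^{0,K}$ gives
\[
(G^K-G_h^K)(\psi_h)=-\int_K (g-\Pi_1^{0,K}g)(\psi_h-\Pi_1^{0,K}\psi_h),
\]
which is bounded by $\|g\|_{0,K}\,h_K|\psi_h|_{1,K}$. This yields $\|G^K-G^K_h\|_{(\Psi_h(K))'}\lesssim h^{\delta}\|g\|_{0,K}$. The hardest point is justifying these local dual-norm bounds with the correct $\ell^2$ aggregation. We also need to make sure that the projection errors of the virtual test functions are controlled by Lemma~\ref{lem:est_poly}, which holds since $\xi_h\in\rH^2(K)$ and $\psi_h\in\rH^1(K)$.

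\textbf{Conclusion.} Summing the interpolation, projection and load bounds, and applying Cauchy--Schwarz over the elements, gives
\[
e_h\lesssim \max\Big\{1,\mu,\kappa,\tfrac{\mu\alpha}{\sqrt{\kappa}}\Big\}\, h^{\delta}\big(|\chi|_{2+\delta,\Omega_{\mathrm{S}}}+|\varphi|_{1+\delta,\Omega_{\mathrm{D}}}+\|\fb\|_{0,\Omega_{\mathrm{S}}}+\|g\|_{0,\Omega_{\mathrm{D}}}\big),
\]
which is the stated estimate.
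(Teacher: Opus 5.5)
Your proposal is correct and follows the paper's route. You insert Lemmas~\ref{lem:est_inte} and~\ref{lem:est_poly} into Lemma~\ref{lem:errorForApriori}, and you bound the load-consistency terms by applying Lemma~\ref{lem:est_poly} to the virtual test functions, which you spell out in more detail than the paper. You also make explicit the $\ell^2$ aggregation over elements, obtained by replacing the step $\sum_i a_ib_i\le(\sum_i a_i)(\sum_i b_i)$ in that lemma's proof with Cauchy--Schwarz; the lemma's literally $\ell^1$-summed statement needs this to give $h^{\delta}$.

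One bookkeeping slip: Lemma~\ref{lem:est_poly} as written takes $\phi_\pi\in\rP_{k+1}(K)$ with $s\le k+1$, so your choices $k=1$, $s=2+\delta$ (and $k=0$, $s=1+\delta$) violate its hypothesis. What you actually need, and what the paper implicitly uses, is the standard approximation estimate for $\rP_2(K)$ (resp.\ $\rP_1(K)$), valid up to $s=3$ (resp.\ $s=2$).
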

\begin{proof}
    Note that Lemma~\ref{lem:est_poly} implies that
    \begin{align*}
        \sum_{K\in\cT_h(\Omega_{\mathrm{S}})} \|F_h^K - F^K \|_{(\Xi_h(K))'} &\lesssim h^{\delta}\|\fb\|_{0,\Omega_{\mathrm{S}}} \quad \text{and} \quad
        \sum_{K\in\cT_h(\Omega_{\mathrm{D}})} \|G_h^K - G^K \|_{(\Psi_h(K))'} \lesssim h^{\delta}\|g\|_{0,\Omega_{\mathrm{D}}}.
    \end{align*}
    Thus, a direct application of Lemmas~\ref{lem:est_poly} and \ref{lem:est_inte} in Lemma~\ref{lem:errorForApriori} yield the result. 
    \qed
\end{proof}
\begin{comment}
    \begin{remark}
        Observe that the interpolation errors associated with the boundary coupling,
        $$ \sum_{K\in\cT_h(\Omega_{\mathrm{S}})} h_K^{-\frac{1}{2}}|\chi-\rI_{\mathrm{S},h}^K\chi|_{1,K} + h_K^{\frac{1}{2}}|\chi-\rI_{\mathrm{S},h}^K\chi|_{2,K} \quad \text{and} \quad \sum_{K\in\cT_h(\Omega_{\mathrm{D}})} h_K^{-\frac{1}{2}}\|\varphi-\rI_{\mathrm{D},h}^K\varphi\|_{0,K} + h_K^{\frac{1}{2}}\|\varphi-\rI_{\mathrm{D},h}^K\varphi\|_{1,K},$$
        exhibit an additional convergence order of $h^{\delta + \frac{1}{2}}$ (cf. Lemma~\ref{lem:est_inte}). As a result, these terms are of higher order and do not affect the linear behaviour of the estimate in Theorem~\ref{th:convergence_rates}.
    \end{remark}
\end{comment}

\begin{figure}[h!]
    \centering  
    \includegraphics[width=\textwidth,trim={0.5cm 0.cm 1.5cm 0.cm},clip]{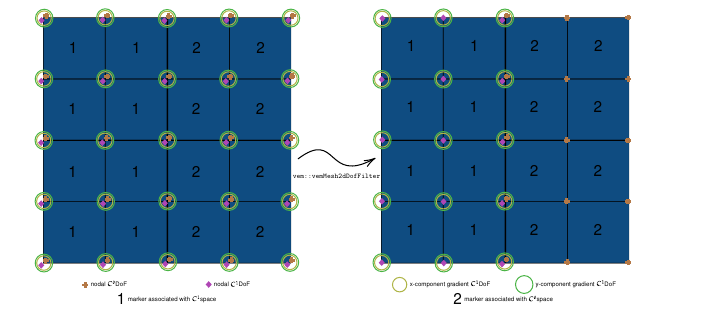}
    \caption{Schematic overview of the filtering procedure performed by the class \texttt{vem::vemMesh2dDofFilter}. Left: A discretisation of the unit square highlighting the DoFs for both $C^1$--VEM and $C^0$VEM spaces. Right: The resulting mesh after the filtering procedure, where the interface is defined by $\Sigma=\{(x,y)\in\Omega : x=\frac{1}{2}\}$ and $\Omega=(0,1)^2$.}\label{fig:sketch_class}
\end{figure}

\section{Numerical experiments} \label{sec:numerical_examples}
This section is devoted to numerical experiments that confirm the approximation properties and theoretical predictions established in Section~\ref{sec:apriori} for the VEM scheme introduced in Section~\ref{sec:vem}.

The total computational relative energy error is defined by using the local polynomial approximation of the solution as follows
\begin{align*}
    \overline{\mathrm{e}}_h^2 = \overline{\mathrm{e}}_{\chi_h}^2+\overline{\mathrm{e}}_{\varphi_h}^2 &:= %\sum_{K\in\cT_h} \frac{|(\chi-\Pi_2^{\nabla^2,K}\chi_h,\varphi- \Pi^{\nabla,K}_1\varphi_h)|_K}{|(\chi,\varphi)|_K} %\\& =
    \frac{\sum_{K\in\cT_h(\Omega_\mathrm{S})}|\chi-\Pi_2^{\nabla^2,K}\chi_h|_{2,K}^2}{\sum_{K\in\cT_h(\Omega_\mathrm{S})}|\chi|_{2,K}^2}+  \frac{\sum_{K\in\cT_h(\Omega_\mathrm{D})}|\varphi-\Pi^{\nabla,K}_1\varphi_h|_{1,K}^2}{\sum_{K\in\cT_h(\Omega_\mathrm{D})}|\varphi|_{1,K}^2}.
\end{align*}
Whereas, the order of convergence $r(\cdot)$ applied to either $\overline{\mathrm{e}}_h$, $\overline{\mathrm{e}}_{\chi_h}$, or $\overline{\mathrm{e}}_{\varphi_h}$ of the refinement $1\leq j$ is computed from the formula
$$r(\cdot)^{j+1} = \frac{\log\left(\frac{(\cdot)^{j+1}}{(\cdot)^{j}}\right)}{\log\left(\frac{h^{j+1}}{h^{j}}\right)},$$ 
with $h$ indicating the maximum element diameter   in $\cT_h$. In turn, the stabilisation term $S_\mathrm{S}^K(\chi_h,\xi_h)$ follows the ``diagonal recipe" introduced in \cite{beirao2017stab} and $S_\mathrm{D}^K(\varphi_h,\psi_h)$ is chosen as the well-known \texttt{DOFI-DOFI} stabilisation operator. We also recall that these  operators are weighted by $\mu$ and $\kappa$, respectively.

The implementation has been carried out using the object-oriented \texttt{C++} library \texttt{VEM++} \cite{dassi2023vem++}, which provides a new functionality to filter mesh DoFs according to element markers through the class \texttt{vem::vemMesh2dDofFilter}. The procedure starts from a mesh where multiple VEM spaces (e.g. the $C^1$--VEM and $C^0$--VEM spaces defined in Section~\ref{sec:C1VEM} and \ref{sec:C0VEM}, respectively) are simultaneously defined over the entire mesh. Each element is then assigned a marker identifying the space that must be retained on it. After applying the filter, only the DoFs associated with the selected space are kept on each marked element. Consequently, both sets of DoFs coexist only on the interface between regions with different markers. In Fig.~\ref{fig:sketch_class}, we provide a schematic overview of this process.

\begin{figure}[h!]
    \centering
    \subfigure[Quadrilateral. \label{fig:quad}]  {\includegraphics[width=0.24\textwidth]{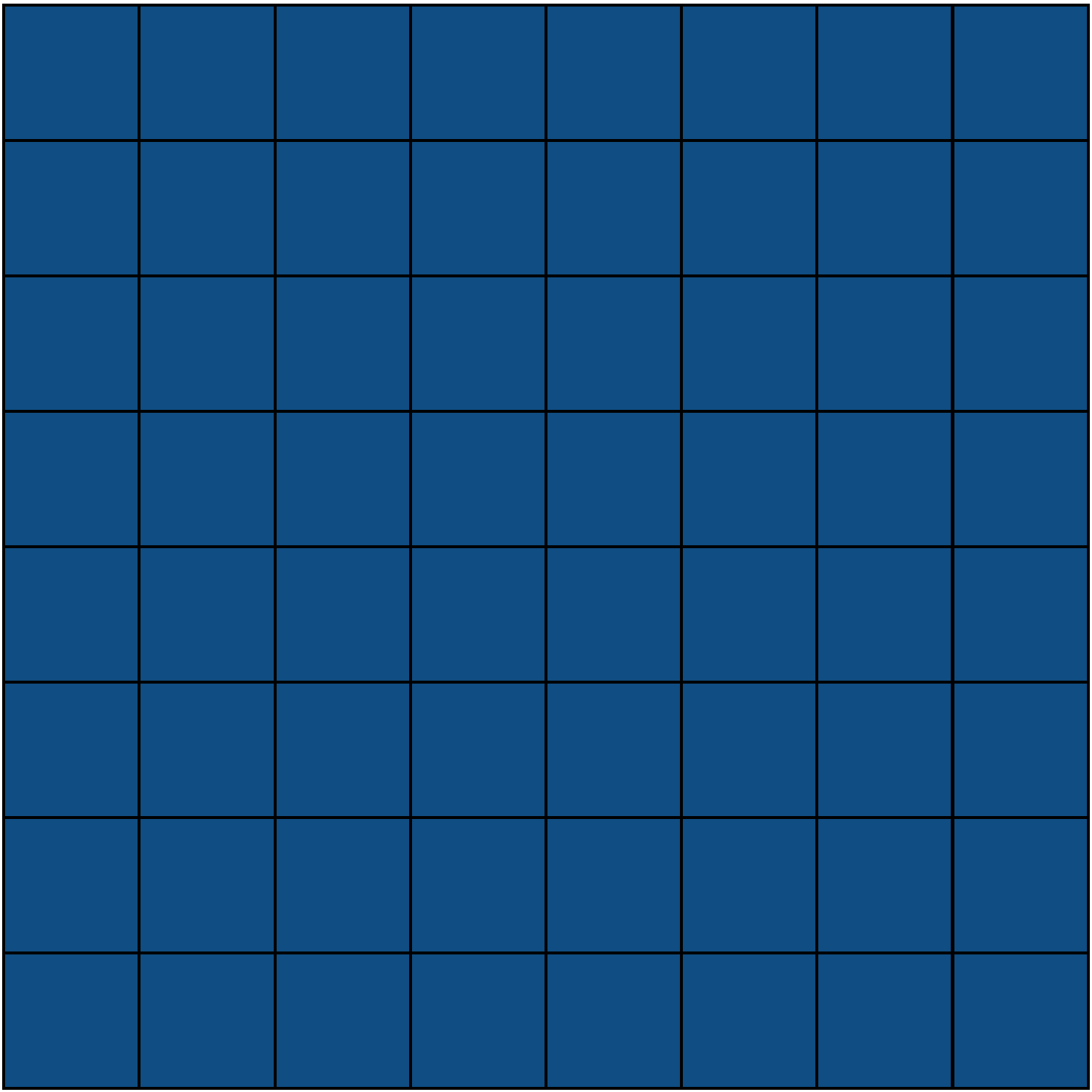}} 
    \subfigure[Non--convex. \label{fig:nonconvex}]  {\includegraphics[width=0.24\textwidth]{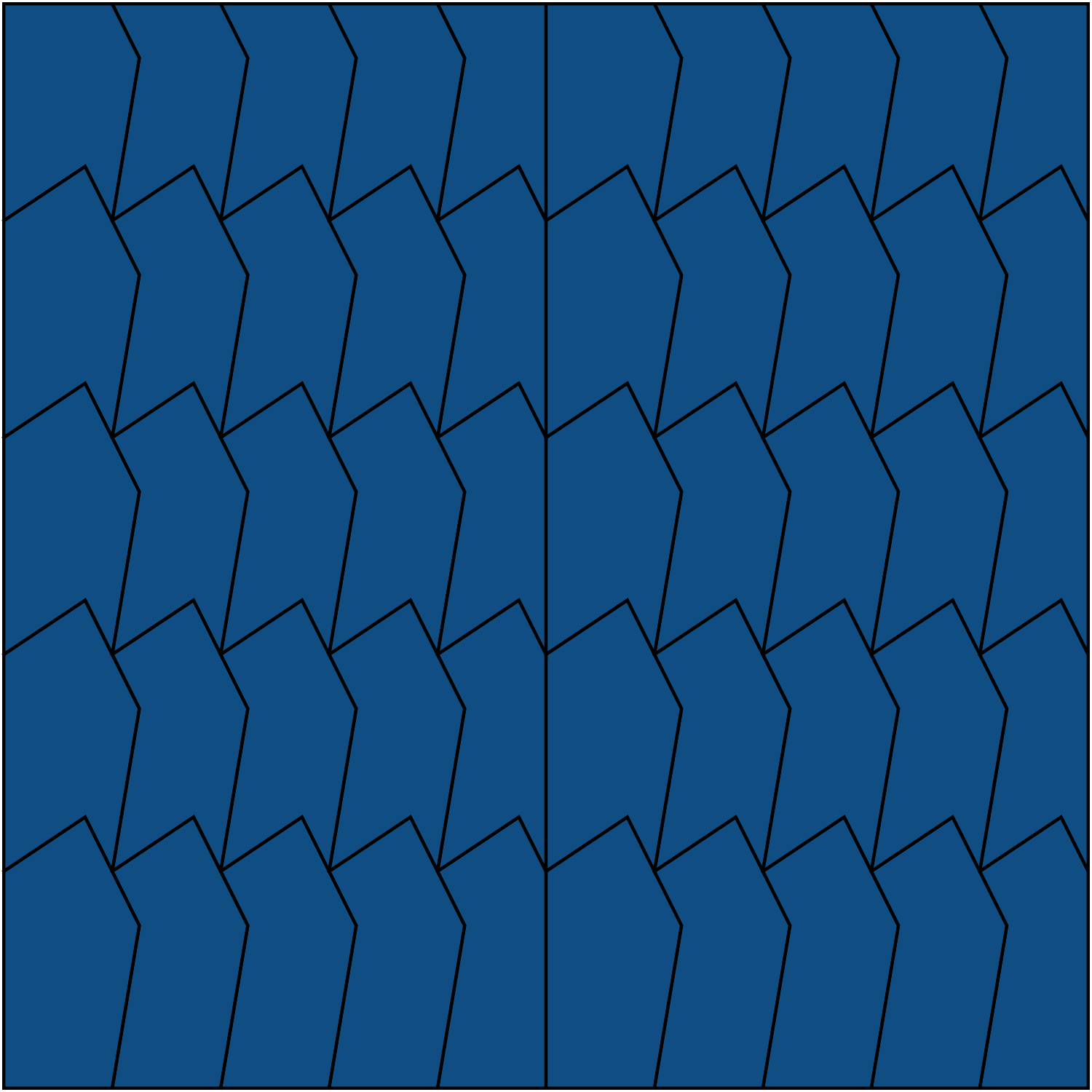}}  
    \subfigure[Voronoi. \label{fig:voro}] {\includegraphics[width=0.24\textwidth]{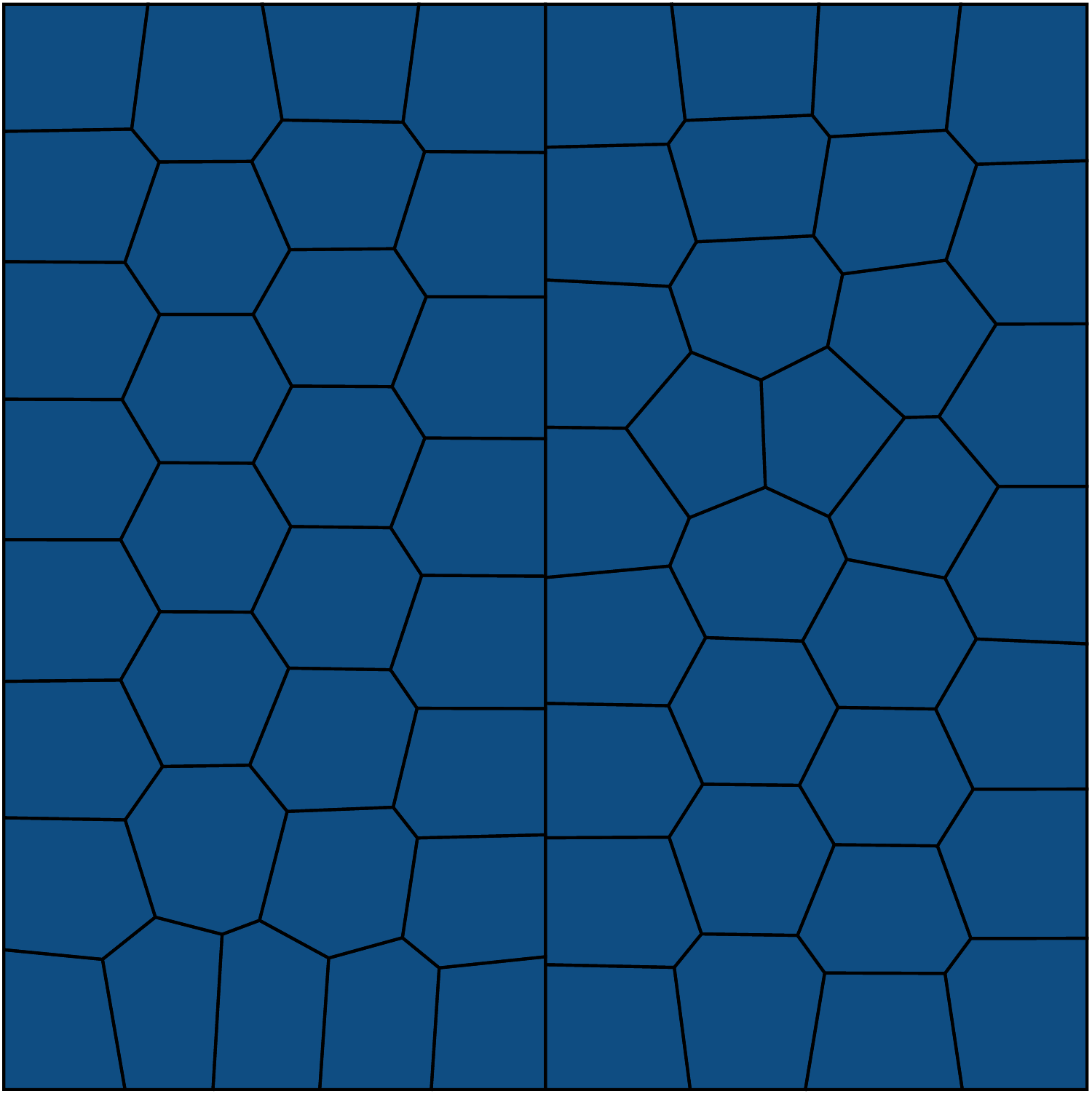}}
    \subfigure[Perturbed Voronoi. \label{fig:pert}] {\includegraphics[width=0.24\textwidth]{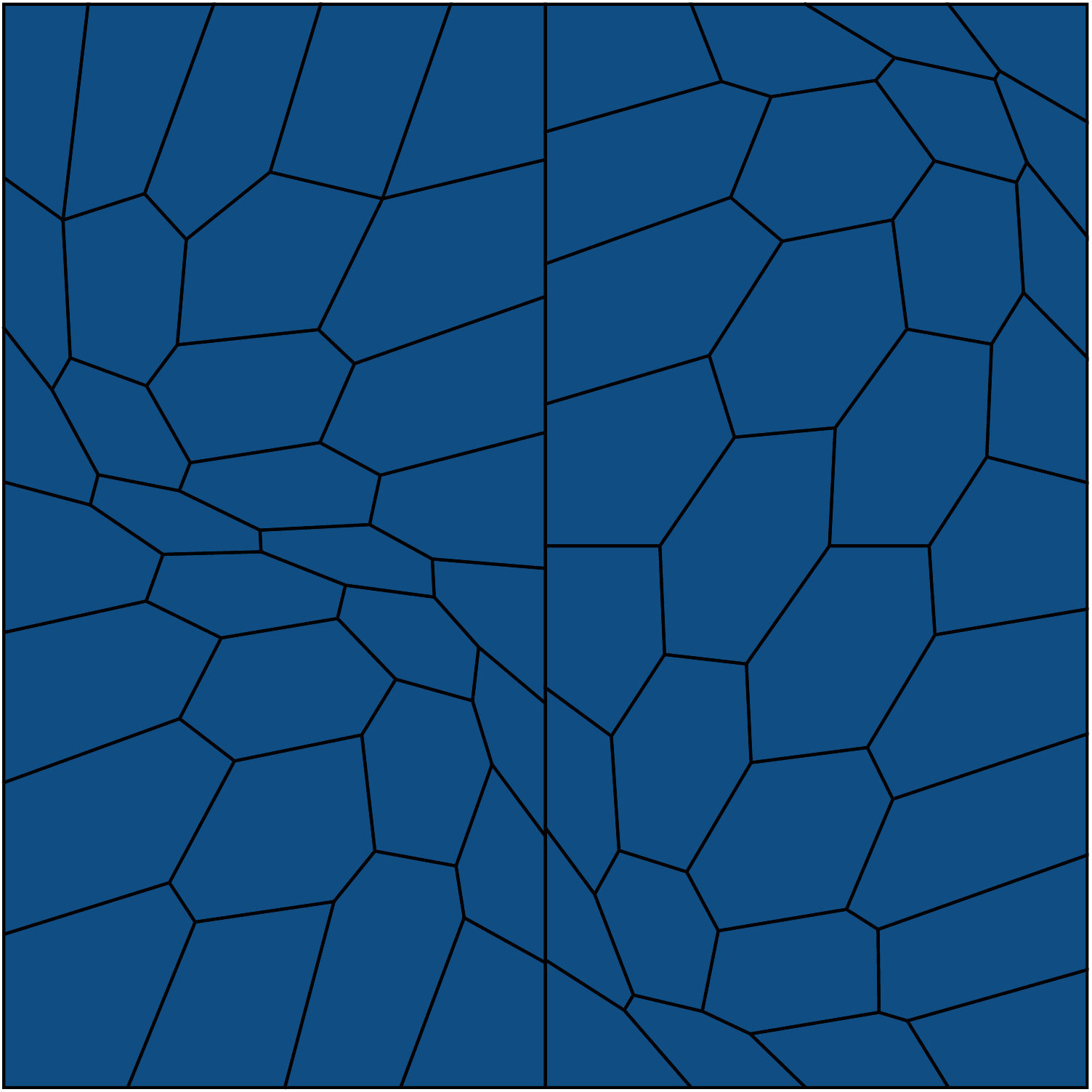}}
    \caption{Variety of discretisations used for the unit square in Experiments 1 and 2.}\label{fig:meshes}
\end{figure}

\begin{figure}[h!]
    \centering  
    \includegraphics[width=0.75\textwidth,trim={0.65cm 1.45cm 0.65cm 1.55cm},clip]{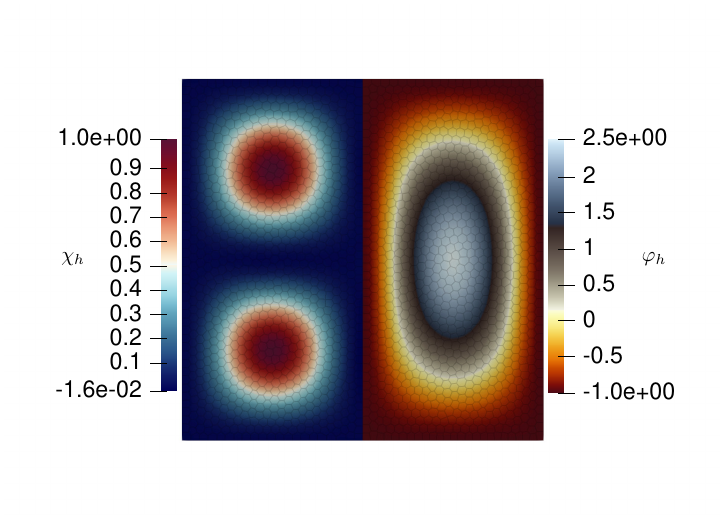}
    \caption{Experiment 1. Nodal values of the VEM solutions $\chi_h$ and $\varphi_h$ on the Voronoi mesh at the final refinement level.}\label{fig:zeroSol}
\end{figure}

\subsection{Experiment 1: manufacture solution with zero value in the interface}\label{sec:exp1}
In this experiment, we consider the domain $\Omega = (0,1)^2$ (see Fig.~\ref{fig:meshes}), which is split into two subdomains: $\Omega_{\mathrm{S}} = (0,\frac{1}{2})\times(0,1)$ and $\Omega_{\mathrm{D}} = (\frac{1}{2},1)\times(0,1)$. The interface between them is given by $\Sigma = \Gamma_{\mathrm{S}}\cap\Gamma_{\mathrm{D}} = \{(x,y)\in\Omega : x=\frac{1}{2}\}$. As illustrated in Fig.~\ref{fig:voro}-\ref{fig:pert}, the computational meshes join at $\Sigma$ with non-matching nodes, highlighting the scheme's ability to   handle nonconforming interfaces.

The smooth manufactured solutions are set as
\begin{gather*} 
    \chi(x,y) = \frac{1}{20}\sin(2\pi x)^2\sin(2\pi y)^2, \quad \varphi(x,y) = \varphi_0(x-1)^2(\frac{1}{2} - x)^2y^2(1-y)^2 - 1,\\
    \bu(x,y)=\bcurl (\chi(x,y)), \quad
    \text{and}\quad p(x,y)=0.
\end{gather*}
Note that the source terms $\fb$ and $g$ are sufficiently smooth, as they are derived from the prescribed manufactured solutions. In addition, the transmission conditions in \eqref{eq:TC} lead to additional functionals in the right-hand side of the system. The model parameters are set to unity ($\mu=\kappa=\alpha=1$) and $\varphi_0 = 1.44\times10^4$.

The error history is reported in Table~\ref{tab:convergenceZero}, where we observe an asymptotic linear decay for the   scheme  proposed in Section~\ref{sec:vem}, as predicted by Theorem~\ref{th:convergence_rates} for all the mesh types listed in Fig.~\ref{fig:meshes}. In addition, a snapshot of the nodal values for the variables of interest, which are known through the DoFs (cf. Section~\ref{sec:disc_formulation}), are shown in Fig.~\ref{fig:zeroSol} for the Voronoi mesh (see Fig.~\ref{fig:voro}) in the last refinement step.

\begin{table}[h!]
\setlength{\tabcolsep}{2pt}
 \begin{center}
\resizebox{0.75\textwidth}{!}{ 
\begin{tabular}{| c | c | c | c | c | c | c | c | c |}
\hline
{$\cT_h$} & 
{$h$} & 
{\#DoFs} & 
{$\bar{\mathrm{e}}_{h}$} & 
{$r(\bar{\mathrm{e}}_{h})$} & 
{$\bar{\mathrm{e}}_{\chi_h}$} & 
{$r(\bar{\mathrm{e}}_{\chi_h})$} & 
{$\bar{\mathrm{e}}_{\varphi_h}$} & 
{$r(\bar{\mathrm{e}}_{\varphi_h})$} \\
\hline 
\hline
\multirow{4}{*}{\rotatebox{90}{Quad.}} 
& 6.25e-02 & 613 & 3.56e-01 & $\star$    & 2.77e-01 & $\star$    & 2.23e-01 & $\star$   \\
& 3.12e-02 & 2245 & 1.80e-01 & 0.99 & 1.39e-01 & 1.00 & 1.14e-01 & 0.97\\
& 1.56e-02 & 8581 & 9.00e-02 & 1.00 & 6.94e-02 & 1.00 & 5.73e-02 & 0.99\\
& 7.81e-03 & 33541 & 4.50e-02 & 1.00 & 3.47e-02 & 1.00 & 2.87e-02 & 1.00\\
\hline
\hline
\multirow{4}{*}{\rotatebox{90}{Nonc.}} 
& 2.83e-02 & 7505 & 1.91e-01 & $\star$    & 1.70e-01 & $\star$    & 8.88e-02 & $\star$   \\
& 2.36e-02 & 10805 & 1.62e-01 & 0.91 & 1.44e-01 & 0.89 & 7.43e-02 & 0.98\\
& 2.02e-02 & 14705 & 1.41e-01 & 0.93 & 1.25e-01 & 0.91 & 6.38e-02 & 0.99\\
& 1.77e-02 & 19205 & 1.24e-01 & 0.96 & 1.10e-01 & 0.95 & 5.59e-02 & 0.99\\
\hline
\hline
\multirow{4}{*}{\rotatebox{90}{Vor.}} 
& 4.42e-02 & 2131 & 2.60e-01 & $\star$    & 2.07e-01 & $\star$   & 1.56e-01 & $\star$   \\
& 3.12e-02 & 4222 & 1.86e-01 & 0.96 & 1.48e-01 & 0.97 & 1.12e-01 & 0.95\\
& 2.21e-02 & 8347 & 1.35e-01 & 0.93 & 1.08e-01 & 0.90 & 8.01e-02 & 0.98\\
& 1.56e-02 & 16692 & 9.46e-02 & 1.02 & 7.59e-02 & 1.03 & 5.65e-02 & 1.00\\
\hline
\hline
\multirow{4}{*}{\rotatebox{90}{Pert.}} 
& 4.42e-02 & 2122 & 3.07e-01 & $\star$   & 2.52e-01 & $\star$    & 1.76e-01 & $\star$    \\
& 3.12e-02 & 4224 & 2.27e-01 & 0.88 & 1.85e-01 & 0.89 & 1.31e-01 & 0.85\\
& 2.21e-02 & 8347 & 1.63e-01 & 0.95 & 1.35e-01 & 0.92 & 9.21e-02 & 1.02\\
& 1.56e-02 & 16659 & 1.18e-01 & 0.92 & 9.55e-02 & 0.99 & 6.98e-02 & 0.80\\
\hline
\end{tabular}
}
\end{center}
%\vspace{0.25cm}
\caption{Experiment 1. Convergence history of the proposed VEM for a variety of meshes.}
\label{tab:convergenceZero}
\end{table}

\begin{figure}[h!]
    \centering  
    \includegraphics[width=0.75\textwidth,trim={0.65cm 1.45cm 0.65cm 1.55cm},clip]{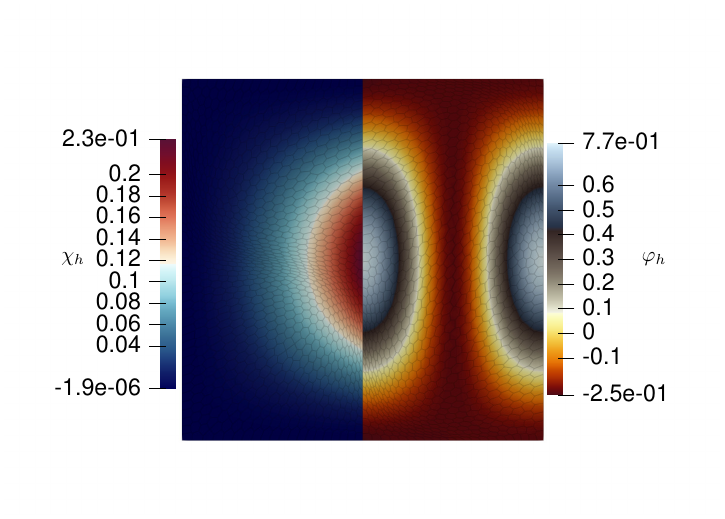}
    \caption{Experiment 2. Nodal values of the VEM solutions $\chi_h$ and $\varphi$ on the finest refinement of the perturbed Voronoi mesh.}\label{fig:nonzeroSol}
\end{figure}

\subsection{Experiment 2: manufactured solution with nonzero value in the interface}\label{sec:exp2}

We now extend the result observed in Experiment 1. Following the same domain configuration and unit parameter settings (cf. Section~\ref{sec:exp1}), we adopt a manufactured solution that does not satisfy the transmission conditions \eqref{eq:TC}. Consequently, additional right-hand side forcing terms are introduced to compensate for this mismatch. Such a manufactured solutions are given by
\begin{gather*} 
    \chi(x,y) = \sin(x)^2\sin(\pi y)^2, \quad \varphi(x,y) = \sin(\pi y)^2\cos(2\pi x)^2 - \frac{1}{4},\\
    \bu(x,y)=\bcurl (\chi(x,y)), \quad
    \text{and}\quad p(x,y) = \sin(\pi y)\cos(2 \pi x).
\end{gather*}

Table~\ref{tab:convergenceNonzero} reports the error history for the VEM scheme proposed in Section~\ref{sec:vem}. Once again, the expected linear order of convergence is observed (cf. Theorem~\ref{th:convergence_rates}) for all polygonal meshes considered (see Fig.~\ref{fig:meshes}). Moreover, the nodal values of the VEM solutions $\chi_h$ and $\varphi_h$ are shown in Fig.~\ref{fig:nonzeroSol} for the Perturbed Voronoi mesh (see Fig.~\ref{fig:pert}) in the last refinement step.

\begin{table}[h!]
\setlength{\tabcolsep}{2pt}
\begin{center}
\resizebox{0.75\textwidth}{!}{ 
\begin{tabular}{| c | c | c | c | c | c | c | c | c |}
\hline
{$\cT_h$} & 
{$h$} & 
{\#DoFs} & 
{$\bar{\mathrm{e}}_{h}$} & 
{$r(\bar{\mathrm{e}}_{h})$} & 
{$\bar{\mathrm{e}}_{\chi_h}$} & 
{$r(\bar{\mathrm{e}}_{\chi_h})$} & 
{$\bar{\mathrm{e}}_{\varphi_h}$} & 
{$r(\bar{\mathrm{e}}_{\varphi_h})$} \\
\hline 
\hline
\multirow{4}{*}{\rotatebox{90}{Quad.}} 
& 6.25e-02 & 613 & 2.56e-01 & $\star$   & 1.26e-01 & $\star$  & 2.23e-01 & $\star$  \\
& 3.12e-02 & 2245 & 1.29e-01 & 0.99 & 6.30e-02 & 1.00 & 1.12e-01 & 0.99\\
& 1.56e-02 & 8581 & 6.44e-02 & 1.00 & 3.15e-02 & 1.00 & 5.62e-02 & 1.00\\
& 7.81e-03 & 33541 & 3.22e-02 & 1.00 & 1.57e-02 & 1.00 & 2.81e-02 & 1.00\\
\hline
\hline
\multirow{4}{*}{\rotatebox{90}{Nonc.}} 
& 2.83e-02 & 7505 & 1.35e-01 & $\star$   & 1.04e-01 & $\star$    & 8.65e-02 & $\star$  \\
& 2.36e-02 & 10805 & 1.12e-01 & 1.03 & 8.61e-02 & 1.05 & 7.21e-02 & 1.00\\
& 2.02e-02 & 14705 & 9.53e-02 & 1.07 & 7.26e-02 & 1.11 & 6.18e-02 & 1.00\\
& 1.77e-02 & 19205 & 8.24e-02 & 1.09 & 6.22e-02 & 1.16 & 5.41e-02 & 1.00\\
\hline
\hline
\multirow{4}{*}{\rotatebox{90}{Vor.}} 
& 4.42e-02 & 2131 & 1.91e-01 & $\star$   & 1.12e-01 & $\star$  & 1.55e-01 & $\star$  \\
& 3.12e-02 & 4222 & 1.39e-01 & 0.91 & 8.51e-02 & 0.79 & 1.10e-01 & 0.97\\
& 2.21e-02 & 8347 & 9.76e-02 & 1.03 & 5.74e-02 & 1.13 & 7.89e-02 & 0.97\\
& 1.56e-02 & 16692 & 6.96e-02 & 0.97 & 4.23e-02 & 0.88 & 5.53e-02 & 1.02\\
\hline
\hline
\multirow{4}{*}{\rotatebox{90}{Pert.}} 
& 4.42e-02 & 2122 & 2.22e-01 & $\star$  & 1.35e-01 & $\star$  & 1.77e-01 & $\star$  \\
& 3.12e-02 & 4224 & 1.60e-01 & 0.95 & 9.63e-02 & 0.97 & 1.28e-01 & 0.93\\
& 2.21e-02 & 8347 & 1.11e-01 & 1.06 & 6.71e-02 & 1.04 & 8.83e-02 & 1.07\\
& 1.56e-02 & 16659 & 8.16e-02 & 0.88 & 5.03e-02 & 0.83 & 6.43e-02 & 0.91\\
\hline
\end{tabular}
}
\end{center}
%\vspace{0.25cm}
\caption{Experiment 2. Convergence history of the proposed VEM for a variety of meshes.}
\label{tab:convergenceNonzero}
\end{table}

\subsection{Experiment 3: quarter annulus dead-end filter} 
One important application involving coupled free-flow and porous flow is the dead-end filtration process. The design of these filtration systems plays a significant role in several industries, including the chemical, pharmaceutical, and aeronautical sectors \cite{Hanspal2006}. 

Following \cite{Liu2019}, we consider the computational domain shown in Figure~\ref{fig:quarterAnnulusDomain} where the free-flow region is defined as $\Omega_\mathrm{S} = \{ (x, y) \in \mathbb{R}^2 : 4 < x^2 + y^2 < 9, \, x > 0, \, y > 0 \}$, while the porous medium occupies the region $\Omega_\mathrm{D} = \{ (x, y) \in \mathbb{R}^2 : 1 < x^2 + y^2 < 4, \, x > 0, \, y > 0 \}$, and the interface separating both regions is given by $\Sigma = \{ (x, y) \in \mathbb{R}^2 : x^2 + y^2 = 4, \, x \geq 0, \, y \geq 0 \}$. 

The boundary conditions imposed on the viscous fluid are prescribed in terms of the velocity field, as follows
$$\bu(x,y) = 
\begin{cases} 
\left(-\frac{x}{30}, -\frac{y}{30}\right) & \text{on } \Gamma_{\mathrm{S},1}, \\[1ex]
(-\frac{1}{10}, 0) & \text{on } \Gamma_{\mathrm{S},2}, \\[1ex]
(0, -\frac{1}{10}) & \text{on } \Gamma_{\mathrm{S},3}.
\end{cases}$$
Here, the boundary sections are given by the sets $\Gamma_{\mathrm{S},1} = \{ (x, y) \in \mathbb{R}^2 : x^2 + y^2 = 9, \, x \geq 0, \, y \geq 0 \}$, $\Gamma_{\mathrm{S},2} = \{ (x, y) \in \mathbb{R}^2 : 2 \leq x \leq 3, \, y = 0 \}$, and $\Gamma_{\mathrm{S},3} = \{ (x, y) \in \mathbb{R}^2 : 2 \leq y \leq 3, \, x = 0 \}$. Notice that from the definition of a stream function, we have the relation $\bu=\bcurl\chi$. Thus,
\[
\chi(x,y) = 
\begin{cases} 
\frac{3}{10}\arctan\left(\frac{y}{x}\right) & \text{on } \Gamma_{\mathrm{S},1}, \\[1ex]
0 & \text{on } \Gamma_{\mathrm{S},2}, \\[1ex]
\frac{3\pi}{20}  & \text{on } \Gamma_{\mathrm{S},3}.
\end{cases}
\qquad \text{and} \qquad
\nabla \chi(x,y) = 
\begin{cases} 
\left(-\frac{y}{30},\frac{x}{30}\right) & \text{on } \Gamma_{\mathrm{S},1}, \\[1ex]
(0, \frac{1}{10}) & \text{on } \Gamma_{\mathrm{S},2}, \\[1ex]
(-\frac{1}{10}, 0) & \text{on } \Gamma_{\mathrm{S},3}.
\end{cases}
\]
prescribe the boundary conditions for the stream function on $\Gamma_{\mathrm{S},1}\cup\Gamma_{\mathrm{S},2}\cup\Gamma_{\mathrm{S},1}$. On the other hand, no flux boundary conditions are prescribed on $\Gamma_{\mathrm{D},2} = \{ (x, y) \in \mathbb{R}^2 : 1 \leq x \leq 2, \, y = 0 \}\cup\{ (x, y) \in \mathbb{R}^2 : 1 \leq y \leq 2, \, x = 0 \}$ and zero Darcy's pressure is given on $\Gamma_{\mathrm{D},1} = \{ (x, y) \in \mathbb{R}^2 : x^2 + y^2 = 1, \, x \geq 0, \, y \geq 0 \}$. Moreover, thanks to the mixed boundary conditions imposed for $\varphi$, we seek the solution in the space $\rH^1_{\Gamma_{\mathrm{D},1}}(\Omega_{\mathrm{D}}):=\{\psi\in \rH^1: \psi=0 \text{ on } \Gamma_{\mathrm{D},1}\}$ and no Lagrange multipliers are required for this numerical test. 

\begin{figure}[h!]
    \centering  
    \includegraphics[width=0.5\textwidth]{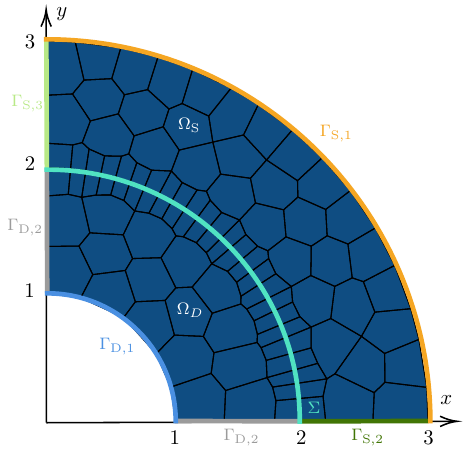}
    
    \caption{Experiment 3. Domain configuration for quarter annulus dead-end filter discretised with a coarse Voronoi mesh.}\label{fig:quarterAnnulusDomain}
\end{figure}

Snapshots of the variables of interest are provided in Fig.~\ref{fig:snapshotsQuarterAnnulus} for a Voronoi discretisation of the mesh with 15,501 elements. In particular, Fig~\ref{fig:pressureStreamQuarterAnnulusDomain} shows the nodal values of $\chi_h$ and $\varphi_h$. Whereas, Fig~\ref{fig:velocitySnapshotsQuarterAnnulus} illustrate the local polynomial approximation of the velocities as follows: $\bcurl(\Pi_2^{\nabla^2,K}\chi_h)$ for the Stokes velocity and $\kappa \nabla(\Pi_1^{\nabla,K}\varphi_h)$ for Darcy's velocity.  These fields have been smoothed by averaging the values across coincident nodes. We recall that the physical parameters for this test are set to $\mu=1$, $\kappa=10^{-2}$, and $\alpha=\frac{\sqrt{\kappa}}{\mu}$. The stream function -- pressure formulation is able to recover the results from the simulations performed previously in \cite{Liu2019}.

\begin{figure}[h!]
    \centering  
    
    \subfigure[Nodal values for $\chi_h$ and $\varphi_h$.\label{fig:pressureStreamQuarterAnnulusDomain}] {\includegraphics[width=0.4\textwidth,trim={0.675cm 0.65cm 0.49cm 0.55cm},clip]{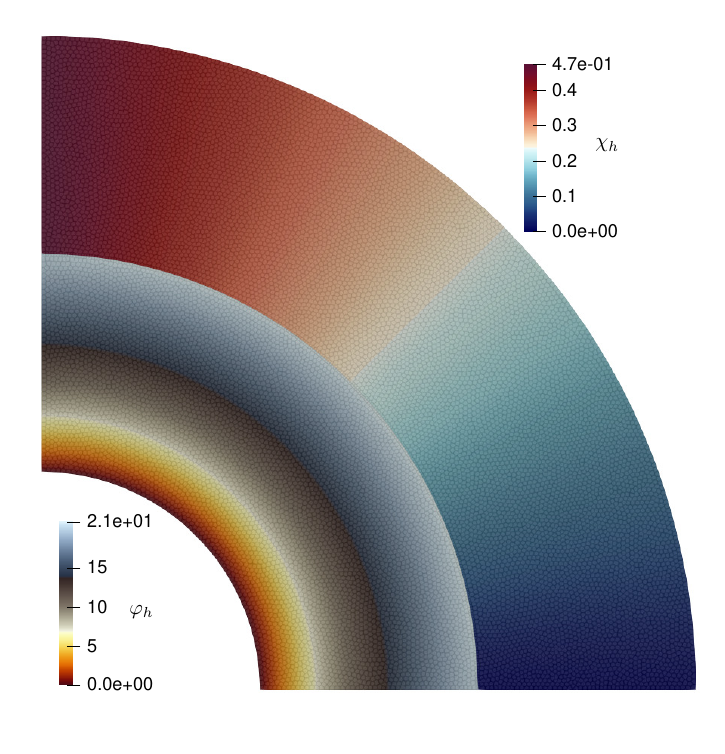}}
    \subfigure[Polynomial approximation of the velocities. \label{fig:velocitySnapshotsQuarterAnnulus}] {\includegraphics[width=0.4\textwidth,trim={5.75cm 2.25cm 4.25cm 1.15cm},clip]{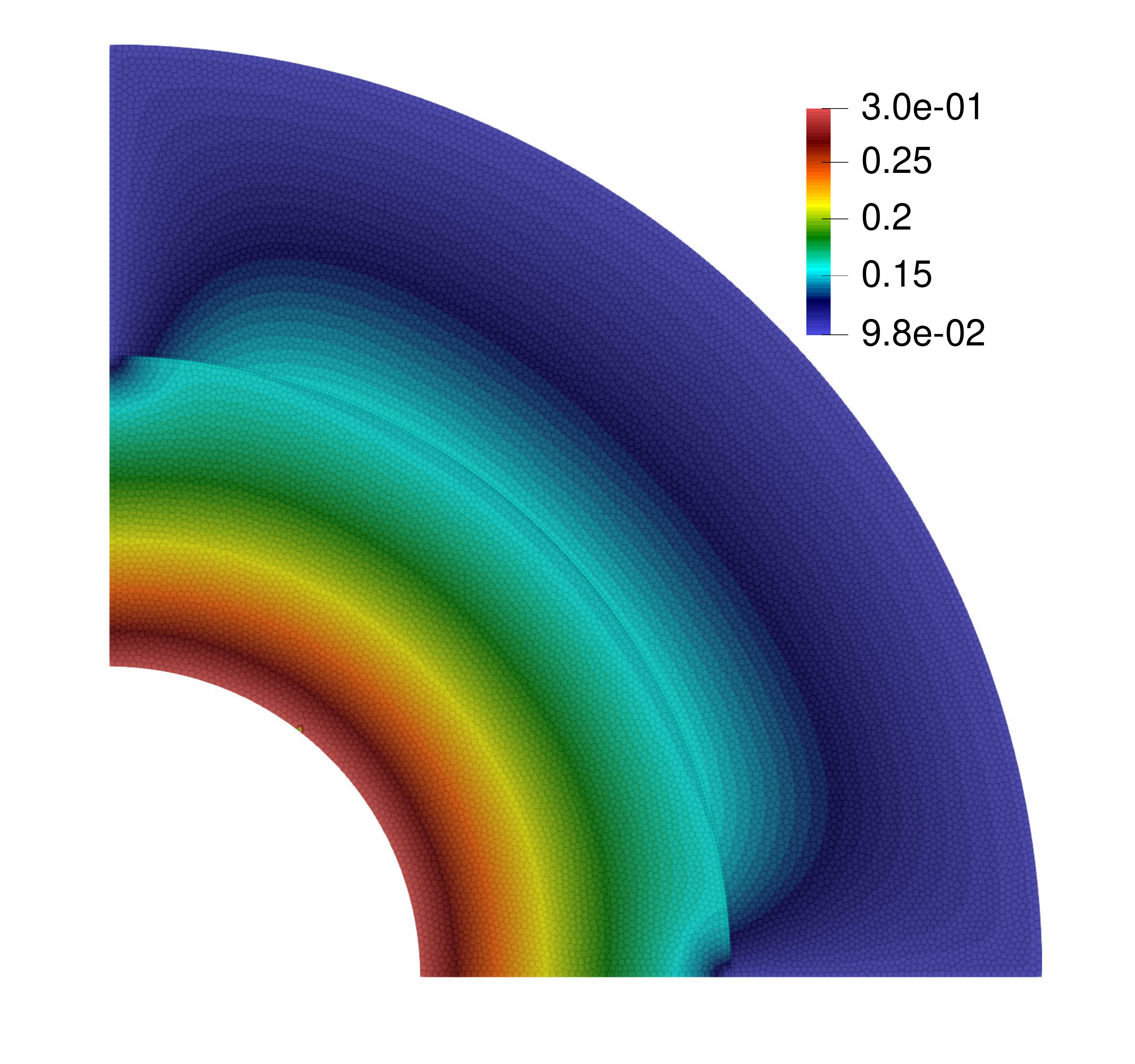}}
    
    \caption{Experiment 3. Snapshots of the variables of interest for the quarter annulus dead-end filter domain discretised with 15,501 Voronoi elements.}\label{fig:snapshotsQuarterAnnulus}
\end{figure}

\subsection{Experiment 4: blood flow network on encapsulation-based bioartificial organs}\label{sec:exp4}
Bioartificial organs are engineered constructs that combine living cells with biocompatible materials to reproduce, restore, or augment specific physiological functions. These systems are designed to interface with host tissues and operate as substitutes for damaged or missing organs. Their development typically relies on biocompatible scaffolds, which provide a structural environment for cell growth and tissue formation through the incorporation of cells and bioactive factors, we refer to \cite{Krishani23,wang19} for more details. 

A representative example is the bioartificial pancreas, where hydrogel-based scaffolds are used to encapsulate pancreatic islets containing insulin-secreting $\beta$-cells. The encapsulation device protects the pancreatic islets through filtration mechanisms provided by semi-permeable membranes \cite{dassi26,song2016}.

In this experiment, we consider a different phenomena. Following \cite{Bukac2024}, we model a simplified bioartificial organ of size $\num[round-mode=figures, round-precision=2]{0.87}  \unit{\cm}\times\num[round-mode=figures, round-precision=3]{1.37}  \unit{\cm}$; the flow network $\Omega_{\mathrm{S}}$ whose inner walls $\Sigma$ act as ultrafiltration membranes for blood plasma. The blood plasma entering to the network through $\Gamma_{\mathrm{S}}^{\mathrm{in}}$ with a velocity of $\num[round-mode=figures, round-precision=2]{3.5}  \unit{\cm}/\unit{\s}$, exits through $\Gamma_{\mathrm{S}}^{\mathrm{out}}$ with a no traction condition, and no velocity is imposed for the external wall $\Gamma_{\mathrm{S}}^{\mathrm{ext}}$. The porous biocompatible scaffold medium $\Omega_{\mathrm{D}}$ which avoids the fluid to scape through the external wall (i.e., no flux boundary conditions on $\Gamma_\mathrm{D}^{\mathrm{ext}}$) is embedded within the flow network and interacts with it through the ultrafiltration membranes $\Sigma$. Such a configuration enables the exchange of oxygen and nutrients between the blood plasma and the cells residing within the scaffold, thereby supporting the long-term viability of the transplanted cells. Fig~\ref{fig:networkFlowDomainTube} illustrates the specific domain configuration considered in this simulation, where the straight channel has a width of $\num[round-mode=figures, round-precision=1]{0.03}  \unit{\cm}$.

\begin{figure}[h!]
    \centering
    \includegraphics[width=0.5\textwidth]{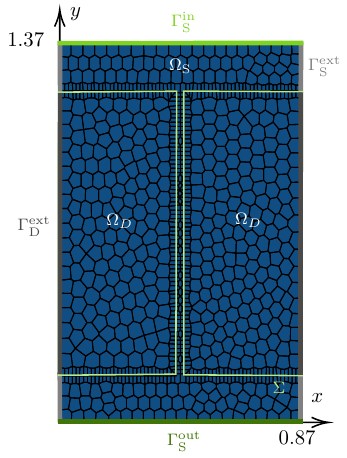}
    %\subfigure[Hexagonal channel. \label{fig:networkFlowDomainHexa}]  {\includegraphics[width=0.49\textwidth]{images/flowNetworkHexa.pdf}}  
    \caption{Experiment 4. Blood flow network with straight channel embedded within a bioartificial scaffold domain discretised with a coarse Voronoi meshes.}\label{fig:networkFlowDomainTube}
\end{figure}

Regarding the physical parameters, the blood viscosity constant is fixed to $\mu =  \num[round-mode=figures, round-precision=1]{4.e-2}  \frac{\unit{\gram}}{\unit{\cm}\, \unit{\s}}$, the slip coefficient is set to $\gamma =  \num[round-mode=figures, round-precision=2]{e3}  \frac{\unit{\g}}{\unit{\cm}^2\,\unit{\s}}$, and we explore the influence of the permeability constant by varying its values, i.e., $K \in  \{\num[round-mode=figures, round-precision=2]{e-7},\num[round-mode=figures, round-precision=2]{e-4},\num[round-mode=figures, round-precision=2]{e-1},\num[round-mode=figures, round-precision=2]{e2}\}$ with units $\frac{\unit{\cm}^3\, \unit{\s}}{\unit{\g}}$. To translate this setting to our model, we have that $\kappa = K\mu$ and $\alpha = \frac{\gamma\sqrt{\kappa}}{\mu}$. On the other hand, the stream function is set free on $\Gamma_\mathrm{S}^{\mathrm{out}}$ and inherits the following boundary conditions
\[
\chi(x,y) = 
\begin{cases} 
3.5x & \text{on } \Gamma_\mathrm{S}^{\mathrm{in}}, \\[1ex]
0 & \text{on } \Gamma_\mathrm{S}^{\mathrm{ext}}\cap \{(x,y)
\in\bbR^2 : x=0\}, \\[1ex]
3.5  & \text{on } \Gamma_\mathrm{S}^{\mathrm{ext}}\cap \{(x,y)
\in\bbR^2 : x=0.87\}.
\end{cases}
\qquad \text{and} \qquad
\nabla \chi(x,y) = 
\begin{cases} 
\left(3.5,0\right) & \text{on } \Gamma_\mathrm{S}^{\mathrm{in}}, \\[1ex]
(0, 0) & \text{on } \Gamma_{\mathrm{S}}^{\mathrm{ext}}.
\end{cases}
\]

As expected, higher values of permeability allow blood to flow more freely across the interface $\Sigma$, which consequently reduces the total volumetric flow rate remaining within the straight channel. The simulations indicate that while higher permeability values can be physically accommodated by the model, the width of the channel would need to be proportionally increased to prevent excessive fluid passing through the interface. Snapshots of the computed velocity and pressure fields are provided in Figs.~\ref{fig:velocityPlotsStraightChannel}--\ref{fig:streamPressurePlotsStraightChannel}.

\begin{figure}[h!]
    \centering
    \subfigure[$K=10^{-7}$.]  {\includegraphics[width=0.4\textwidth,trim={14.5cm 3.cm 6.cm 2.75cm},clip]{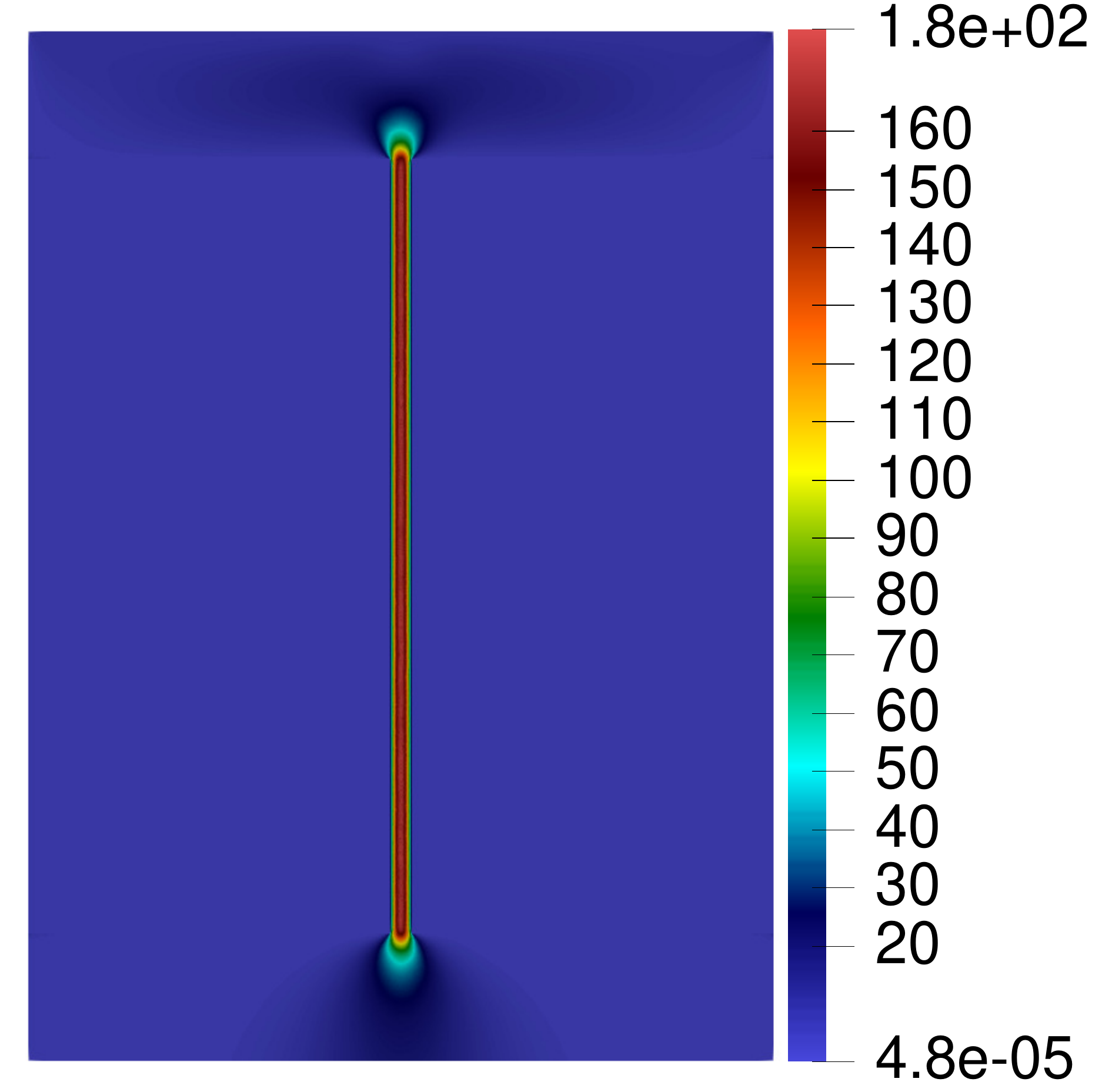}} 
    \subfigure[$K=10^{-4}$.]  {\includegraphics[width=0.4\textwidth,trim={14.5cm 3.cm 6.cm 2.75cm},clip]{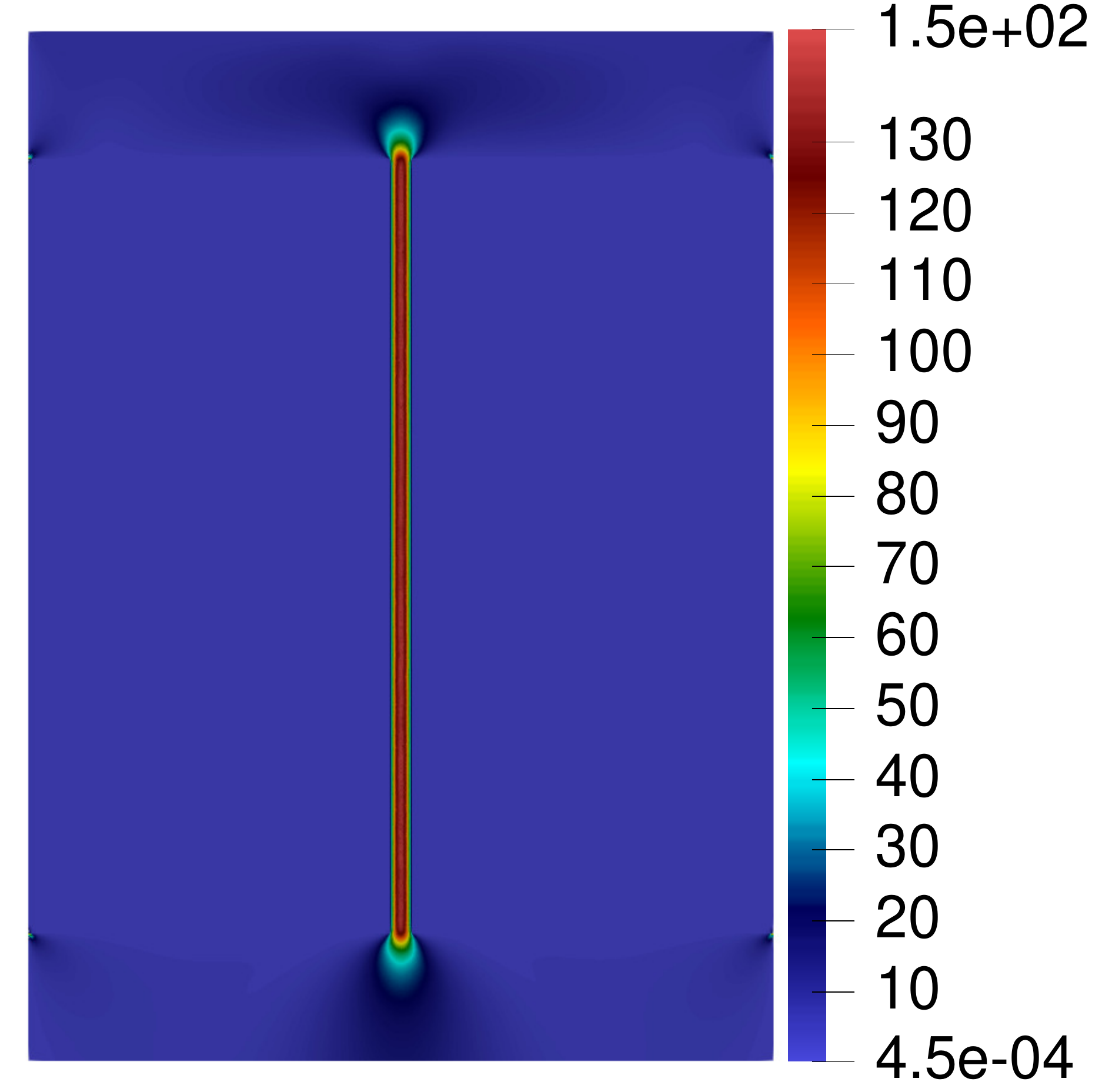}}  
    \subfigure[$K=10^{-1}$.]  {\includegraphics[width=0.4\textwidth,trim={14.5cm 3.cm 6.cm 2.75cm},clip]{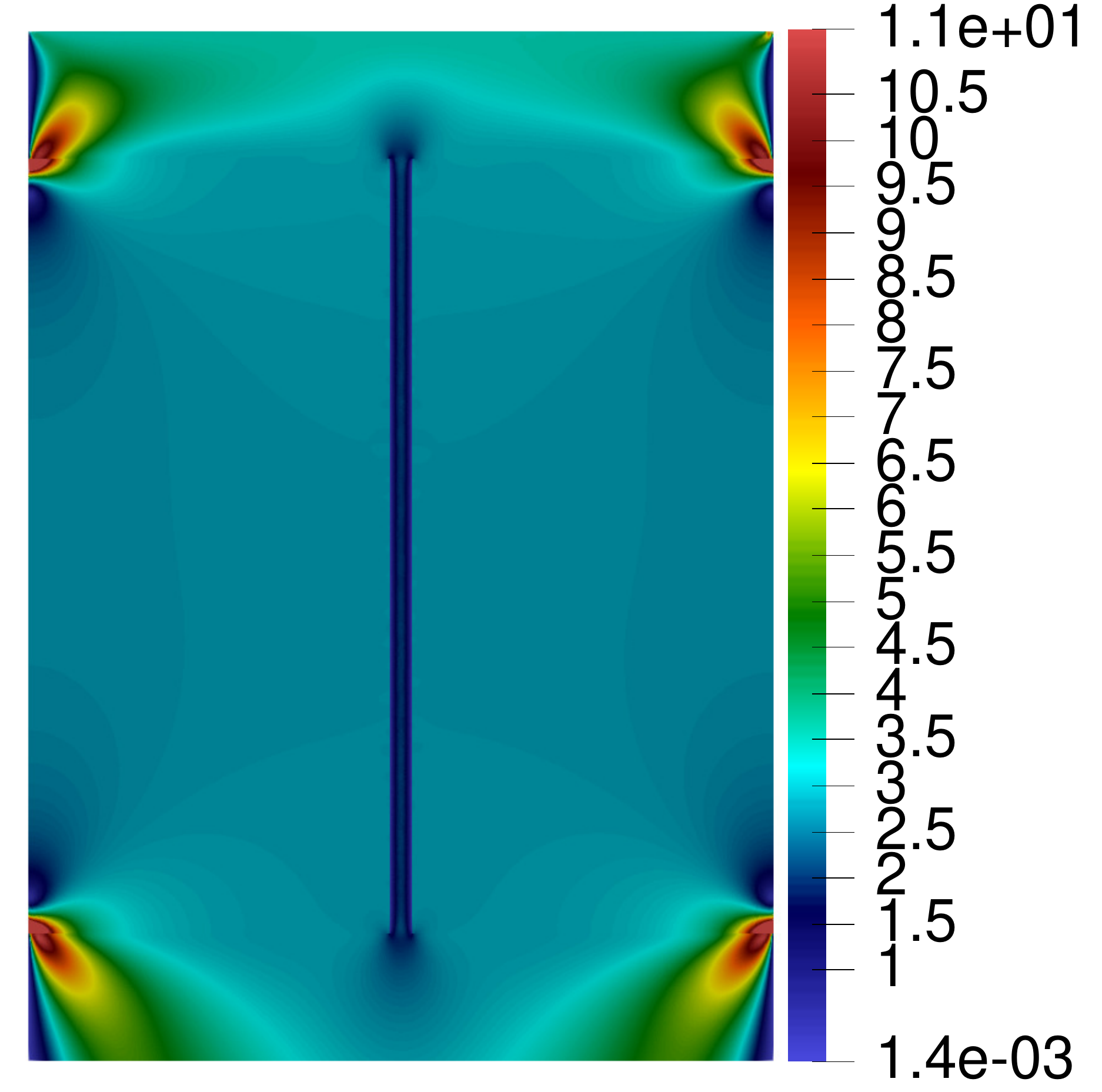}} 
    \subfigure[$K=10^{2}$.]  {\includegraphics[width=0.4\textwidth,trim={14.5cm 3.cm 6.cm 2.75cm},clip]{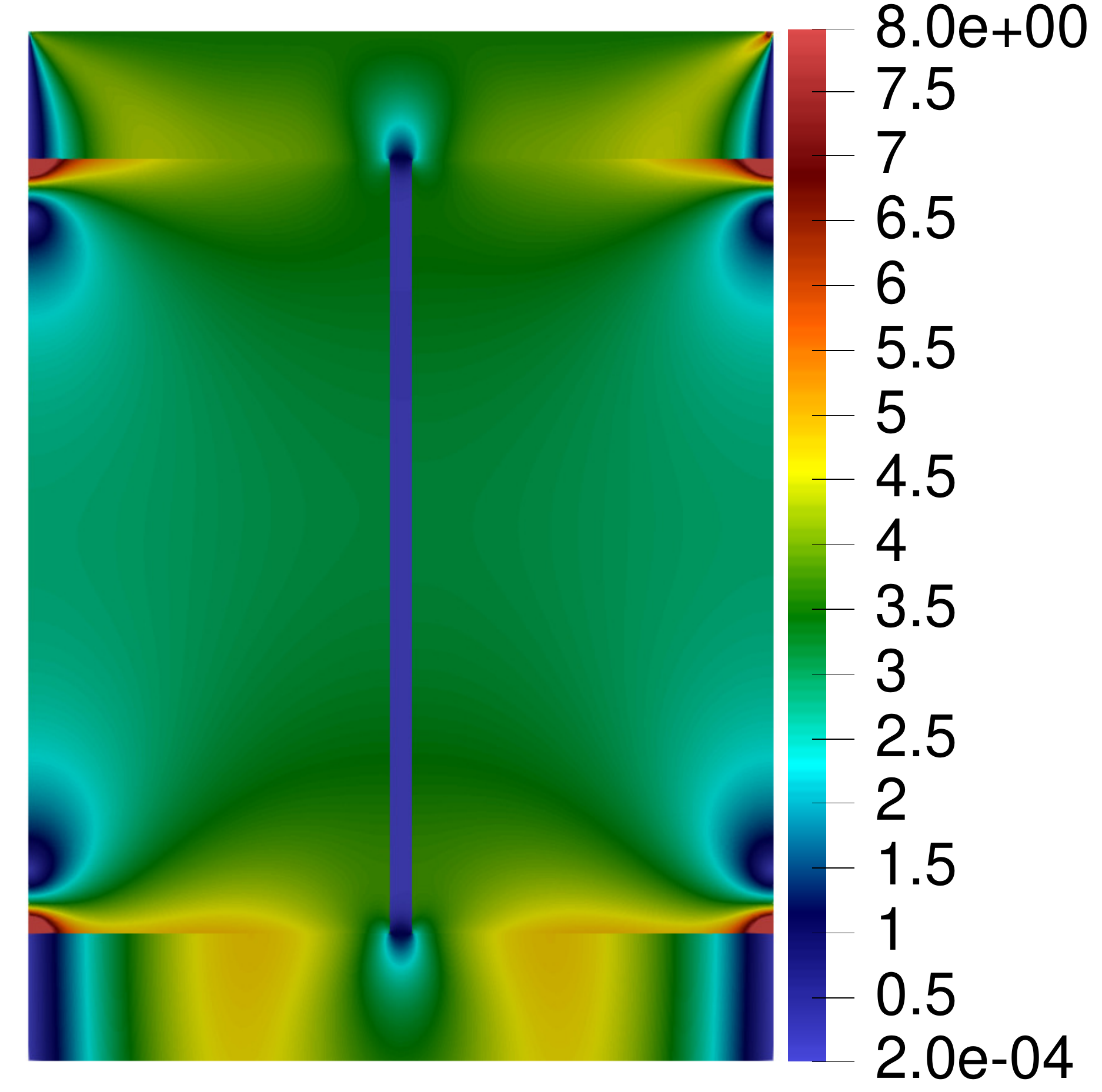}}  
    \caption{Experiment 4. Stokes' and Darcy's velocities for a straight blood flow network embedded within a bioartificial scaffold domain discretised with a Voronoi meshes composed of 55,292 elements for a variety different values of $K$.}\label{fig:velocityPlotsStraightChannel}
\end{figure}

\begin{figure}[h!]
    \centering
    \subfigure[$K=10^{-7}$.]  {\includegraphics[width=0.4\textwidth,trim={2.85cm .25cm 0.5cm .25cm},clip]{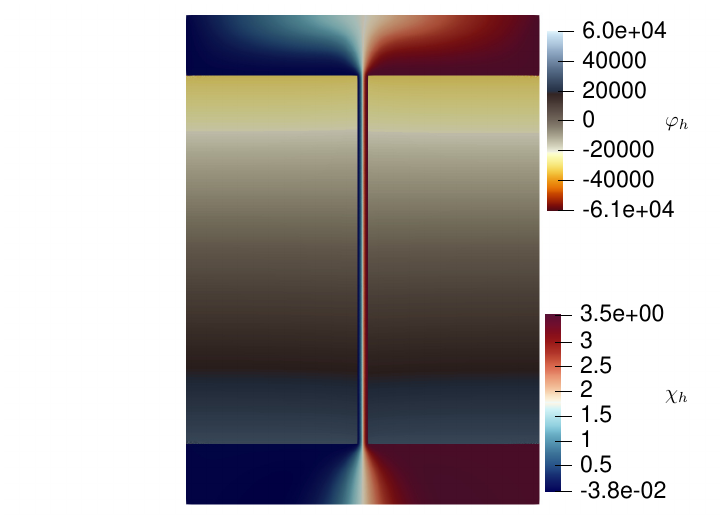}} 
    \subfigure[$K=10^{-4}$.]  {\includegraphics[width=0.4\textwidth,trim={2.85cm .25cm 0.5cm .25cm},clip]{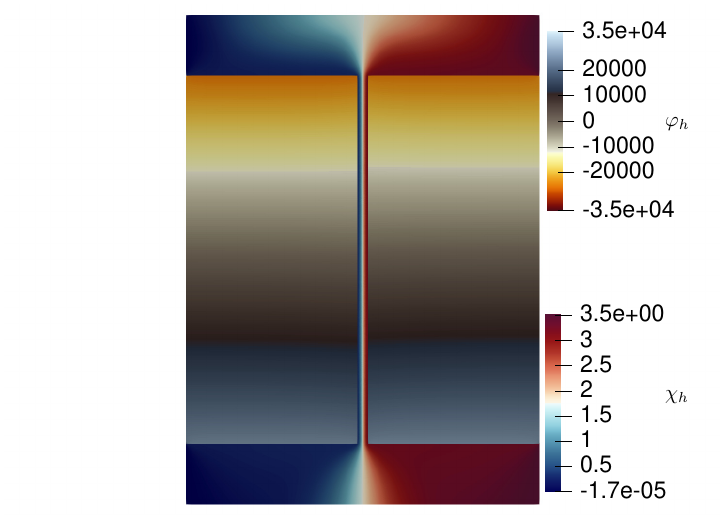}}  
    \subfigure[$K=10^{-1}$.]  {\includegraphics[width=0.4\textwidth,trim={2.85cm .25cm 0.5cm .25cm},clip]{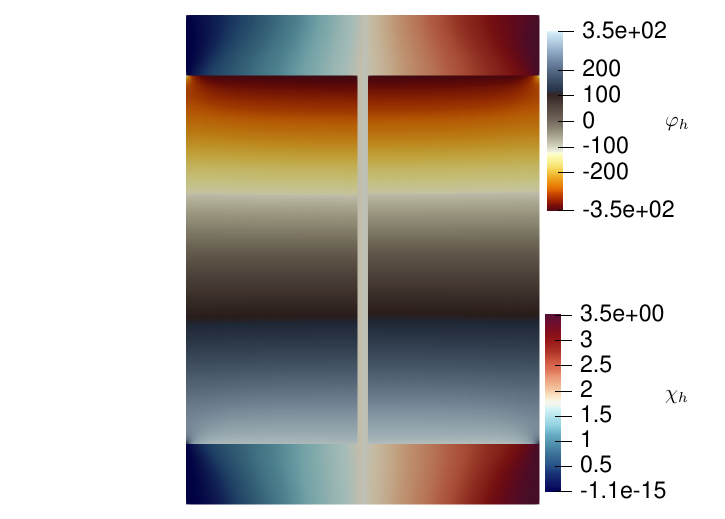}} 
    \subfigure[$K=10^{2}$.]  {\includegraphics[width=0.4\textwidth,trim={2.85cm .25cm 0.5cm .25cm},clip]{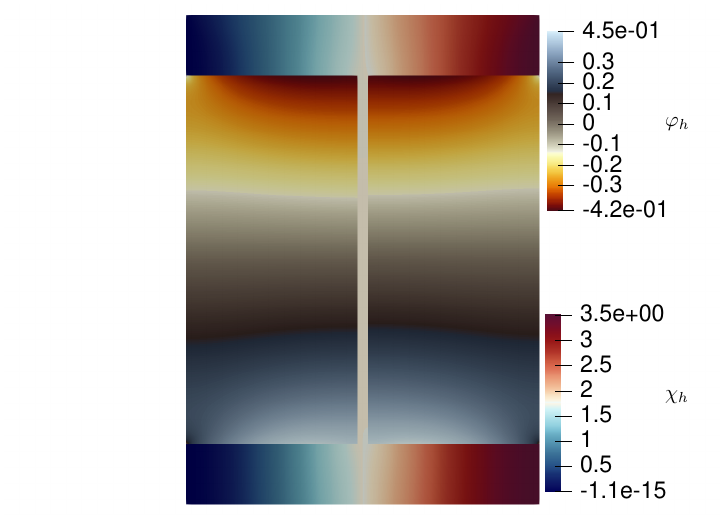}}  
    \caption{Experiment 4. Nodal values of the stream function $\chi_h$ and Darcy's pressure $\varphi_h$ for a straight blood flow network embedded within a bioartificial scaffold domain discretised with a Voronoi meshes composed of 55,282 elements for a variety different values of $K$.}\label{fig:streamPressurePlotsStraightChannel}
\end{figure}

\section{Conclusion}\label{sec:conclusion}
%Looking back at the theoretical foundation established in this work, 
Several steps in the mathematical proofs presented herein posed 
%unique and rigorous 
challenges. More specifically,   the seamless incorporation of transmission conditions into the stream–pressure formulation was non-trivial, 
in part due to the 
%a task significantly complicated by the inherent 
absence of stress tensors in the formulation. An advantage is that %\Overcoming this difficulty revealed that 
the underlying structure of both the continuous and discrete problems yields a completely inf–sup-free strategy. 
%, thereby allowing a straightforward application of the Babu\v{s}ka–Brezzi theory \rrbcom{revise. something is off here}. 
In turn, the error analysis required the precise integration of specialised interpolation and projection operators alongside trace inequalities to yield optimal convergence rates.

Despite recent advances, extending stream-function VEM formulations to multiple connected domains remains a major open challenge. In addition, scaling this framework to a three-dimensional setting necessitates transitioning to a vector potential, which inherently demands the construction of high-order $C^1$--VEM spaces for vector-valued functions. Furthermore, we are also interested in the integration of stabilisation-free techniques and the derivation of robust a posteriori error estimators, which have not yet been  addressed in this context. From a geometric and computational perspective, loosening restrictive meshing assumptions, such as explicitly permitting small edges and incorporating high-order curved interfaces, represents an important direction for future research.

%\rrb{Let's just keep the funding (I've commented the others out). If the specific journal requires to include this explicitly in the manuscript, we can put it back}
%\subsection*{Declarations}
%\textbf{Conflict of interest declaration.} The authors declare that they have no conflict of interest.

\smallskip 
\noindent\textbf{Funding.} 
FD was partially supported by the European Research Council project NEMESIS (Grant No. 101115663). DM was partially supported by ANID-Chile through FONDECYT project 1261217 and by Centro de Modelamiento Matem\'atico (CMM), FB210005,
BASAL funds for centers of excellence. AER and RRB have been partially supported by the Australian Research Council through the \textit{Future Fellowship} grant FT220100496. AER has been partially supported by  the Center for Mathematical Modeling (CMM), Proyecto Basal FB210005. RRB acknowledges the support from  the Center for Advanced Study (CAS) at the Norwegian Academy of Science and Letters under the program \textit{Mathematical Challenges in Brain Mechanics}. 

%\smallskip 
%\noindent\textbf{Author contributions.}RK, DM, AER and RRB designed the methodology. FD and AER developed the code. AER performed the experiments. DM, RRB and AER wrote the manuscript. All authors reviewed and approved the final version of the manuscript. \aer{TODO:  FD ... RK ... DM ... RRB ...}

\bibliographystyle{spmpsci}
\bibliography{bibliography}

\end{document}